\newtheorem{thm}{Theorem}[section]
\newtheorem{prop}[thm]{Proposition}
\newtheorem{lem}[thm]{Lemma}
\newtheorem{cor}[thm]{Corollary}
\newtheorem{conj}[thm]{Conjecture}
\renewcommand{\theclaim}{\kern-3pt}
\newtheorem{IntroThm}{Theorem}
\theoremstyle{definition}
\newtheorem{Def}[thm]{Definition}
\theoremstyle{remark}
\newtheorem{rem}[thm]{Remark}
\numberwithin{equation}{section}
\newcommand{\sE}{{\mathcal E}}
\newcommand{\sF}{{\mathcal F}}
\newcommand{\sH}{{\mathcal H}}
\newcommand{\sO}{{\mathcal O}}
\newcommand{\sQ}{{\mathcal Q}}
\newcommand{\sS}{{\mathcal S}}
\newcommand{\sT}{{\mathcal T}}
\newcommand{\sZ}{{\mathcal Z}}
\newcommand{\A}{{\mathbb A}}
\newcommand{\G}{{\mathbb G}}
\renewcommand{\P}{{\mathbb P}}
\newcommand{\mS}{{\mathbb S}}
\newcommand{\Z}{{\mathbb Z}}
\renewcommand{\phi}{\varphi}
\newcommand{\red}{{\rm red}}
\newcommand{\Hom}{{\rm Hom}}
\newcommand{\Spec}{\operatorname{Spec}}
\newcommand{\Char}{\operatorname{char}}
\newcommand{\Tr}{{\rm Tr}}
\newcommand{\0}{\emptyset}
\newcommand{\sHom}{{\mathcal{H}{om}}}
\newcommand{\id}{{\operatorname{id}}}
\newcommand{\Zar}{{\text{\rm Zar}}}
\newcommand{\holim}{\mathop{{\rm holim}}}
\newcommand{\op}{{\text{\rm op}}}
\newcommand{\fib}{{\operatorname{\rm fib}}}
\newcommand{\Spt}{{\mathbf{Spt}}}
\newcommand{\Spc}{{\mathbf{Spc}}}
\newcommand{\Sm}{{\mathbf{Sm}}}
\renewcommand{\lim}{\operatornamewithlimits{\varprojlim}}
\newcommand{\colim}{\operatornamewithlimits{\varinjlim}}
\newcommand{\GL}{{\operatorname{\rm GL}}}
 \newcommand{\Ab}{{\mathbf{Ab}}}
\newcommand{\HZ}{{\operatorname{\sH \Z}}}
\newcommand{\SH}{{\operatorname{\sS\sH}}}
\newcommand{\eff}{{\mathop{eff}}}
\newcommand{\DM}{{DM}}
\newcommand{\Nis}{{\operatorname{Nis}}}
\newcommand{\ds}{{/\kern-3pt/}}
\newcommand{\EM}{{{EM}_{\A^1}}}
\newcommand{\eM}{{{EM}}}
\newcommand{\SmCor}{{\mathop{SmCor}}}
\newcommand{\mot}{{\mathop{mot}}}
\newcommand{\Mot}{{\mathop{Mot}}}
\newcommand{\cone}{{\operatorname{cone}}}
\newcommand{\co}{{co\text{-}tr}}
\newcommand{\Iso}{\text{Iso}}
\newcommand{\sFS}{\mathcal{FS}}
\begin{document}

\title{Slices and transfers}
\author{Marc Levine}
\address{Northeastern University\\
Department of Mathematics\\
Boston, MA 02115\\
U.S.A.}
\address{Universit\"at Duisburg-Essen\\
Fakult\"at Mathematik, Campus Essen\\
45117 Essen\\
Germany}
\email{marc.levine@uni-due.de}
\thanks{Research supported by the NSF grant DMS-0801220 and the Alexander von Humboldt Foundation}

\keywords{Algebraic cycles, Morel-Voevodsky
stable homotopy category, slice filtration}

\subjclass[2000]{Primary 14C25, 19E15; Secondary 19E08 14F42, 55P42}
 
\renewcommand{\abstractname}{Abstract}
\begin{abstract}  We study the slice filtration for $S^1$-spectra, and raise a number of questions regardings its properties.
\end{abstract}
\date{\today}
\maketitle
\tableofcontents

\section*{Introduction} 
Voevodsky \cite{VoevSlice} has defined an analog of  the classical Postnikov tower in the
setting of  motivic stable homotopy theory by replacing the simplicial suspension
$\Sigma_s:=-\wedge S^1$ with $\P^1$-suspension
$\Sigma_{\P^1}:=-\wedge\P^1$; we call this construction the {\em motivic Postnikov tower}.

Let $\SH(k)$ denote the motivic stable homotopy category of $\P^1$-spectra. One of the main results on motivic Postnikov tower in this setting is

\begin{IntroThm} For $E\in\SH(k)$, the slices $s_nE$ have the natural structure of an $\HZ$-module, and hence determine objects in the category of motives $\DM(k)$.
\end{IntroThm}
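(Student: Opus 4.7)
The plan is to reduce the statement to a structural result about the $S^1$-slice filtration, which (given the title) is the central object of study in the body of the paper. Using the Morel-Voevodsky decomposition $\P^1 \simeq S^1 \wedge \G_m$, the $\P^1$-Postnikov tower fits into a bi-graded refinement in which, at each $\G_m$-degree, one is computing $S^1$-slices. The strategy is therefore:

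First, I would set up the $S^1$-slice filtration $\{f^{S^1}_n, s^{S^1}_n\}$ on the $\A^1$-stable homotopy category $\SH_{S^1}(k)$ of $S^1$-spectra over $k$. The core technical input is to show that the zeroth $S^1$-slice functor $s^{S^1}_0\colon \SH_{S^1}(k)\to \SH_{S^1}(k)$ canonically factors through the derived category $\DM^{\eff}(k)$ of effective motivic complexes, via the Eilenberg-MacLane embedding. Concretely, this requires equipping $s^{S^1}_0 E$ with a natural system of transfer maps along finite correspondences, compatible with $\A^1$-homotopy invariance and Nisnevich descent. This would be proved by building an explicit model for $s^{S^1}_0$ via cycle-theoretic (Friedlander-Suslin/Bloch-type) resolutions, on which pushforward along finite correspondences is manifestly defined.

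Second, I would transfer this result to the $\P^1$-filtration. Writing $\Sigma_{\P^1} = \Sigma_s \circ \Sigma_{\G_m}$, for each $n$ the slice $s_n E\in \SH(k)$ is computed (after appropriate reinterpretation on a fibrant model) as $\Sigma^n_{\P^1}$ applied to the zeroth $S^1$-slice of a naturally associated $S^1$-spectrum obtained by looping $f_nE$ down $n$ times along $\G_m$. Hence the transfer structure produced in the first step endows every $s_nE$ with the structure of an object of $\DM(k)$. Via the Röndigs-Østvær equivalence, an object of $\DM(k)$ is the same as an $\HZ$-module in $\SH(k)$ (where $\HZ$ denotes the motivic Eilenberg-MacLane spectrum), giving the desired $\HZ$-module structure; naturality follows from the functoriality of all the constructions involved. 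Compatibility with the ring structure on the sphere spectrum (and the identification $s_0\mS = \HZ$) implies that the $\HZ$-module structure is the expected one.

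The main obstacle will be the first step: constructing, in a functorial and homotopy-invariant manner, the transfer structure on $s^{S^1}_0 E$. The difficulty is that the slice tower is defined purely through the $\Sigma_{\P^1}$-suspension (or here $\Sigma_s$) with no a priori interaction with finite correspondences, so transfers have to be produced out of thin air from the universal property of the slice filtration. This forces one to work at the level of explicit cofibrant-fibrant presheaf models and to verify that the cycle-theoretic transfer maps descend through the localisation at $\A^1$-weak equivalences and are independent of the chosen model. Once this is done, the rest of the argument is essentially formal manipulation of the two-variable filtration and standard properties of $\HZ$-modules.
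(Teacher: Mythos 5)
Your proposal goes in a fundamentally different direction from the paper, and its first step fails. The paper's argument for this theorem is short and essentially formal, built on three cited inputs: (i) the identification $s_0\mS\cong\HZ$ in $\SH(k)$ (Voevodsky, Levine); (ii) the multiplicativity of the $\P^1$-slice filtration, so that the $\mS$-module structure on any $E$ induces an $s_0\mS$-module structure on each $s_nE$; and (iii) the R\"ondigs--\O stv\ae r equivalence between the homotopy category of strict $\HZ$-modules and $\DM(k)$, refined to the model-category level by Pelaez to upgrade the homotopy-module structure to a strict one. No $S^1$-slice filtration, no cycle-theoretic model, and no explicit transfer construction is needed at this stage.

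Your ``core technical input'' --- that the zeroth $S^1$-slice functor $s^{S^1}_0:\SH_{S^1}(k)\to\SH_{S^1}(k)$ canonically factors through $\DM^{\eff}(k)$ via transfers built from a cycle-theoretic resolution --- is \emph{false} as a statement about general $S^1$-spectra, and the paper devotes Section~\ref{sec:Example} to exhibiting a counterexample: for a genus $\ge 1$ curve $C$ with no $k$-point, the sheaf $\Z_C$ is homotopy invariant and birational, so $\eM_s(\Z_C)\to s_0(\eM_s(\Z_C))$ is an equivalence, yet the Galois-pushforward argument of Lemma~\ref{lem:Galois} shows $\Z_C$ admits no transfers. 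Thus the $S^1$-slice filtration does not, by itself, produce a factorization through $\DM^{\eff}(k)$. What is true (and is one of the main results of the paper, Theorem~\ref{thm:Transfer} / Corollary~\ref{cor:Trans0}) is that $s_0$ of a $\P^1$-\emph{loop} spectrum admits weak transfers, and even that requires the long development of the co-transfer maps in the localizations $\SH_{S^1}(k)/f_{n+1}$; it is not a ready-made input for the introductory theorem, and the implication runs in the opposite direction from what you propose (the hard transfer results are motivated by, not used to prove, this statement).

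Your second step also conflates the two filtrations in a way that needs justification: identifying $s_n E$ (a $\P^1$-slice) with a $\P^1$-suspension of the zeroth $S^1$-slice of a $\G_m$-looping of $f_nE$ requires a compatibility between the $\P^1$- and $S^1$-slice towers (the de-looping formula $s_n\Omega_{\P^1}E\cong\Omega_{\P^1}s_{n+1}E$ and its interaction with $\Omega^\infty_{\P^1}$) that is itself nontrivial and is established in \cite{LevineHC}. Even granting it, this only gives an \emph{effective} motive structure on $\Omega^\infty_{\P^1}s_nE$, not the $\HZ$-module structure on $s_nE$ itself in $\SH(k)$; passing back through $\Omega^\infty_{\P^1}$ loses the negative $\G_m$-twists, so you would still need a separate argument to recover the $\HZ$-module on the $\P^1$-spectrum level. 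In short: the approach is not the paper's, it relies on a false blanket claim, and even the salvageable part would not yield the full statement without the multiplicative-slice machinery the paper actually invokes.
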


The statement is a bit imprecise, as the following expansion will make clear: Ostvar-R\"ondigs \cite{OstRond, OstRond2}  have shown that the homotopy category of strict $\HZ$-modules is equivalent to the category of motives $\DM(k)$. Additionally, Voevodsky \cite{VoevSlice} and the author \cite{LevineHC} have shown that the 0th slice of the sphere spectrum $\mS$ in $\SH(k)$ is isomorphic to $\HZ$. Thus, for $E\in\SH(k)$, the canonical $\mS$-module structure on $E$ induces an $\HZ$-module structure on the slices $s_nE$, in $\SH_{S^1}(k)$. This  has been refined to the model category level by Pelaez \cite{Pelaez}, showing that the slices of a $\P^1$-spectrum $E$ have a natural structure of a strict $\HZ$-module, hence are motives. 

Let $\Spt_{S^1}(k)$ denote the category of $S^1$-spectra. The motivic analog is the category of effective motives over $k$,  $\DM^\eff(k)$. We consider the motivic Postnikov tower in the homotopy category of $S^1$-spectra, $\SH_{S^1}(k)$, and ask  the  question:
\begin{enumerate}
\item is there a ring object in $\Spt_{S^1}(k)$, $\HZ^\eff$, such that the homotopy category of $\HZ^\eff$ modules is equivalent to the category of effective motives $\DM^\eff(k)$?
\item What properties (if any) need an $S^1$-spectrum $E$ have so that the slices $s_nE$ have a natural structure ofhave a natural structure as the Eilenberg-Maclane spectrum of a homotopy invariant complex of presheaves with transfer?
\end{enumerate}
Naturally, if $\HZ^\eff$ exists as in (1), we are asking the slices in (2) to be (strict) $\HZ^\eff$ modules. Of course, a natural candidate for $\HZ^\eff$ would be the 0-$S^1$-spectrum of $\HZ$, $\Omega^\infty_{\P^1}\HZ$, but as far as I know, this property has not yet been investigated.

As we shall see, the 0-$S^1$-spectrum of a $\P^1$-spectrum does have the property that its ($S^1$) slices are motives, while one can give examples of $S^1$-spectra for which the 0th slice does not have this property. This suggests a relation of the question of the structure of the slices of an $S^1$-spectrum with a motivic version of the recognition problem: 
\begin{enumerate}
\item[(3)] How can one tell if a given $S^1$-spectrum is an $n$-fold $\P^1$-loop spectrum? 
\end{enumerate}

In this paper, we prove two main results about the ``motivic" structure on the slices of $S^1$-spectra:
\begin{IntroThm}\label{IntroThm:Tower} Suppose $\Char k=0$.   Let $E$ be an $S^1$-spectrum. Then for each $n\ge1$, there is a tower
\[
\ldots\to\rho_{\ge p+1}s_nE\to \rho_{\ge p}s_nE\to \ldots\to s_nE
\]
in $\SH_{S^1}(k)$ with the following properties: 
\begin{enumerate}
\item the tower is natural in $E$.
\item Let  $\bar{s}_{p,n}E$ be the cofiber of $\rho_{\ge p+1}s_nE\to \rho_{\ge p}s_nE$. Then there is a homotopy invariant complex of presheaves with transfers $\hat{\pi}_p((s_nE)^{(n)})^*\in \DM^\eff_-(k)$ and a natural isomorphism in $\SH_{S^1}(k)$, 
\[
\EM(\hat{\pi}_p((s_nG)^{(n)})^*)\cong \bar{s}_{p,n}E,
\]
where $\EM:\DM^\eff_-(k)\to \SH_{S^1}(k)$ is the   Eilenberg-Maclane spectrum functor.
\end{enumerate}
\end{IntroThm}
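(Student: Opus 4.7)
The plan is to build a motivic Postnikov-type filtration on each slice $s_nE$ and identify its graded pieces with Eilenberg-Maclane spectra of transfer-equipped complexes via an $n$-fold Voevodsky contraction.

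First I would define $\rho_{\ge p}s_nE \to s_nE$ as the $(p-1)$-connective cover of $s_nE$ with respect to the canonical $t$-structure on $\SH_{S^1}(k)$, whose heart is the category of $\A^1$-invariant Nisnevich sheaves of abelian groups on $\Sm/k$. Standard Postnikov/Whitehead truncation produces the tower functorially in $E$, securing property (1), and the cofiber $\bar{s}_{p,n}E$ has a single nontrivial $\A^1$-invariant Nisnevich homotopy sheaf, sitting in degree $p$. As an object of $\SH_{S^1}(k)$ alone, $\bar{s}_{p,n}E$ is therefore an Eilenberg-Maclane spectrum of some Nisnevich sheaf; the remaining content of the theorem is to upgrade this sheaf to a homotopy invariant presheaf with transfers, so it represents an object of $\DM^\eff_-(k)$ and is realized by the functor $\EM$.

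The heart of the argument is to study $s_nE$ through its $n$-fold contraction $(s_nE)^{(n)} := \underline{\Hom}(\G_m^{\wedge n}, s_nE)$. Since $s_nE$ lies in the $n$-th piece of the slice filtration, this contraction is ``tight'' and exposes the motivic core of $s_nE$. The key technical claim is that the $\A^1$-invariant Nisnevich homotopy sheaves $\hat{\pi}_p((s_nE)^{(n)})$ carry a canonical action of the category of finite correspondences $\SmCor(k)$, yielding the complex $\hat{\pi}_p((s_nE)^{(n)})^* \in \DM^\eff_-(k)$. Granting this, the natural isomorphism $\EM(\hat{\pi}_p((s_nE)^{(n)})^*) \cong \bar{s}_{p,n}E$ in $\SH_{S^1}(k)$ is obtained by matching underlying Nisnevich sheaves (once $\EM$ restores the $n$ Tate twists removed by contraction) together with Postnikov boundaries. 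Naturality in $E$ then follows from the functoriality of the slice, truncation, contraction, and $\EM$ constructions.

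The main obstacle is the canonical and functorial construction of the transfer structure on $\hat{\pi}_p((s_nE)^{(n)})$. As the introduction emphasizes, homotopy sheaves of $S^1$-spectra are not generally motivic, so the $n$-fold contraction must do all the essential work, leveraging the fact that $s_nE$ sits in the $n$-th slice layer. The characteristic-zero hypothesis enters in two essential places: through resolution of singularities, needed to Nisnevich-sheafify transfer-equipped presheaves on $\Sm/k$, and through the fundamental identification $s_0\mS \simeq \HZ$ of Voevodsky and the author, which is the initial source of any motivic structure on slices. The bulk of the proof effort will therefore go into producing these transfers and verifying their naturality in $E$ and compatibility with the Postnikov boundaries of the tower.
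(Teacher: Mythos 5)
Your proposal misidentifies the tower. You propose to take $\rho_{\ge p}s_nE$ to be the $(p-1)$-connective cover of $s_nE$ with respect to the homotopy $t$-structure on $\SH_{S^1}(k)$. That is not the tower the paper constructs, and the distinction is essential. The paper first replaces $s_nE(X)$ by the homotopy coniveau (Bloch-type) simplicial spectrum $m\mapsto (s_nE)^{(n)}(X,m)$, then takes the \emph{classical} Postnikov truncations $\tau_{\ge p}$ of each spectrum $(s_nE)^{(n)}(X,m)$ \emph{levelwise in the simplicial variable $m$}, and only afterwards forms the total spectrum. The resulting object $\rho_{\ge p}s_nE$ is not the Nisnevich-homotopy-$t$-structure truncation of $s_nE$, and the layer $\bar{s}_{p,n}E$ is not an Eilenberg-Maclane spectrum on a single sheaf: it is the Eilenberg-Maclane spectrum of the Moore \emph{complex} $\pi_p((s_nE)^{(n)}(X,*))$, which carries nontrivial homology in a range of degrees. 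Your superscript $(n)$ also means something different: the paper's $(s_nE)^{(n)}(X,m)$ is a colimit over supports of codimension $\ge n$ in $X\times\Delta^m$, not the internal Hom $\underline{\Hom}(\G_m^{\wedge n},s_nE)$.

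This misreading matters because it changes the theorem you would need to prove. If the layers were EM-spectra on the homotopy sheaves $\pi_p^{\Nis}(s_nE)$, you would need to show those sheaves have transfers. That is precisely the statement the paper does \emph{not} claim for arbitrary $E$; the introduction is explicit that one only obtains transfers ``up to filtration,'' and the counterexample in section~\ref{sec:Example} shows that homotopy sheaves of slices of $S^1$-spectra need not admit transfers. The whole point of passing to the homotopy coniveau model is that the resulting chain complexes $\pi_p((s_nE)^{(n)}(X,*))$ \emph{can} be given transfers, because each term is built from sections of $s_0(\Omega^n_{\P^1}E)$ over codimension-$n$ points, and the geometric co-transfer maps $\co^n(\alpha)$ (constructed in sections~\ref{sec:cotrans}--\ref{sec:Supp} via deformation to the normal bundle, and valid only after localizing to $\SH_{S^1}(k)/f_{n+1}$) act on these. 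Your proposal treats the existence of transfers on the graded pieces as ``the key technical claim'' without any indication of where they would come from; in the paper's proof, this is the bulk of the work, requiring the moving lemma (proposition~\ref{prop:ChowML}), the refined supports $\sS^{(n)}_{X,\alpha}$, and a regularizing homotopy colimit to turn the up-to-homotopy functoriality into a strict presheaf with transfers.

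Finally, your account of where $\Char k=0$ enters is off. It is not used for resolution of singularities in Nisnevich sheafification, nor does the identification $s_0\mS\simeq\HZ$ appear in this proof at all. The hypothesis is used to avoid inseparability in the construction of the co-transfer maps (so that one can factor finite extensions into generically \'etale pieces and run the deformation arguments in lemmas~\ref{lem:Independence}, \ref{lem:CoTrComp}, and \ref{lem:TotRam}, the last of which also needs multiplicities prime to the characteristic).
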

This result is proven in section \ref{sec:GenCyc}.

One can say a bit more about the tower appearing in theorem~\ref{IntroThm:Tower}. For instance,
$\holim_p\fib( \rho_{\ge p}s_nE\to s_nE)$ is weakly equivalent to zero, so the spectral sequence associated to this tower is weakly convergent. If $s_nE$ is globally $N$-connected (i.e., there is an $N$ such that $s_nE(X)$ is $N$-connected for all $X\in\Sm/k$) then the spectral sequence is strongly convergent.

In other words, the {\em higher} slices of an arbitrary $S^1$-spectrum have some sort of transfers  ``up to filtration". The situation for the 0th slice appears to be more complicated, but for a $\P^1$-loop spectrum we have at least the following result:

\begin{IntroThm}\label{IntroThm:Loops}  Suppose $\Char k=0$.  Take $E\in \SH_{\P^1}(k)$. Then 
for all $m$,  the homotopy sheaf $\pi_m(s_0\Omega_{\P^1}E)$ has a natural structure of a homotopy invariant sheaf with transfers.
\end{IntroThm}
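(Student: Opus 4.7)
The plan is to reduce the statement to the known $\HZ$-module structure on the $\P^1$-slices of objects in $\SH(k)$ (the Voevodsky-Pelaez result recalled in the introduction), by comparing the $S^1$-slice of a $\P^1$-loop spectrum with a $\P^1$-slice of the original.

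First, I would establish a natural identification
\[
s_0^{S^1}\bigl(\Omega_{\P^1} E\bigr) \;\simeq\; \Omega^\infty_{\P^1}\bigl(\Omega_{\P^1}\, s_1^{\P^1} E\bigr) \;=\; \Omega^\infty_{\P^1}\bigl(s_0^{\P^1}(\Omega_{\P^1} E)\bigr)
\]
in $\SH_{S^1}(k)$. The input is that $\Omega_{\P^1}$ shifts the $\P^1$-slice filtration down by one, while the forgetful functor $\Omega^\infty_{\P^1}\colon \SH(k) \to \SH_{S^1}(k)$ sends $f_{n+1}^{\P^1}$ into $f_n^{S^1}$ in a way that identifies the relevant associated graded pieces at the $0$-th level. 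Concretely, for a $\P^1$-loop spectrum, the first nonzero layer of the $S^1$-slice filtration matches the $0$-th layer of the $\P^1$-slice filtration after passage to underlying $S^1$-spectra.

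Next, by Voevodsky \cite{VoevSlice} and Pelaez \cite{Pelaez}, $s_1^{\P^1}(E)$ carries a natural strict $\HZ$-module structure in $\SH(k)$, so by Ostvar-R\"ondigs \cite{OstRond, OstRond2} it is equivalent to the Eilenberg-Maclane spectrum of a motivic complex $M \in \DM^{\eff}_-(k)$. Applying $\Omega_{\P^1}$ and then taking the underlying $S^1$-spectrum yields the Eilenberg-Maclane $S^1$-spectrum of a Tate-twist/shift of $M$; its homotopy sheaves are, by construction, homotopy invariant Nisnevich sheaves with transfers, which gives the claim.

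The main obstacle is the slice comparison in the first step. Both the $S^1$- and the $\P^1$-slice filtrations are characterized by generating subcategories and homotopy colimits, and matching them through the loop and underlying $S^1$-spectrum functors is the technical heart of the argument. The assumption $\Char k = 0$ enters via resolution of singularities, which is used to control the generators of the filtration and to give cycle-theoretic models for $\Omega_{\P^1}$ in the spirit of Friedlander-Voevodsky/Suslin.
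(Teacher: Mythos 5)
Your reduction to Pelaez's $\HZ$-module structure on $\P^1$-slices is a legitimate argument, but it proves a weaker statement than the one the paper actually establishes, and it is essentially a restatement of the paper's first Theorem in Section~1 (where $\Mot^\eff(s_n)\colon\SH(k)\to\DM^\eff(k)$ is constructed via $\Omega^\infty_{\P^1,\mot}\circ\Mot(s_n)$ and the commutation $\Omega^\infty_{\P^1}\circ s_n\cong s_n^\eff\circ\Omega^\infty_{\P^1}$). That argument covers precisely the case of \emph{infinite} $\P^1$-loop spectra, i.e.\ objects in the image of $\Omega^\infty_{\P^1}$.

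The difficulty is that the ``more precise result'' the paper points to, Corollary~\ref{cor:Trans0}, takes $E\in\SH_{S^1}(k)$, and this is what the paper's proof of the theorem uses: only the homotopy groups $\pi_m(s_0\Omega_{\P^1}E)$ are asserted to have transfers, \emph{not} that $s_0\Omega_{\P^1}E$ is a motive outright, exactly because $\Omega_{\P^1}E$ is then only a one-fold $\P^1$-loop $S^1$-spectrum. Your key identification
\[
s_0^{S^1}\bigl(\Omega_{\P^1}E\bigr)\;\cong\;\Omega^\infty_{\P^1}\bigl(s_0^{\P^1}(\Omega_{\P^1}E)\bigr)
\]
does not parse in that setting: the right side applies a $\P^1$-slice functor to an object of $\SH_{S^1}(k)$, which does not carry a $\P^1$-slice filtration. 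The class of one-fold $\P^1$-loop $S^1$-spectra is strictly larger than the class of infinite $\P^1$-loop spectra (the example in Section~\ref{sec:Example} shows the ambient class of all $S^1$-spectra is larger still), and for a single $\P^1$-delooping one has no access to an $\HZ$-module structure. This is precisely what forces the paper's alternative route: the geometric co-transfer $\co_{\bar x,\bar f}$ built from deformation to the normal cone, the key observation (Lemma~\ref{lem:Descent}) that $s_nE$ is local for the quotient $\SH_{S^1}(k)/f_{n+1}$ so that maps into it can be computed after inverting $f_{n+1}$-equivalences, and the de-looping $s_0\Omega_{\P^1}E\cong\Omega_{\P^1}s_1E\cong\sHom_{\SH_{S^1}(k)/f_2}((\P^1,1),s_1E)$ together with the extension of $X\mapsto\Sigma_{\P^1}X_+$ to $\SmCor(k)^\op$ in $\SH_{S^1}(k)/f_2$ (Theorem~\ref{thm:Main1}). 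None of this machinery is replaceable by an appeal to Pelaez when $E$ is merely an $S^1$-spectrum.

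A secondary point: the hypothesis $\Char k=0$ is not used via resolution of singularities as you suggest. In the paper it enters through the co-transfer construction, where one needs finite extensions to be generically separable (so that the cycles appearing in $\SmCor$ give generically \'etale covers) and needs ramification indices to be prime to the characteristic (Lemmas~\ref{lem:Mult} and~\ref{lem:TotRam}); the author explicitly writes that $\Char k=0$ is assumed ``so we do not need to worry about inseparability.''
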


We actually prove a more precise result (corollary~\ref{cor:Trans0}) which states that the 0th slice $s_0\Omega_{\P^1}E$ is itself  a presheaf with transfers, with values in the stable homotopy category $\SH$, i.e., $s_0\Omega_{\P^1}E$ has ``transfers up to homotopy". This raises the question:
\begin{enumerate}
\item[(4)] Is there an operad acting on $s_0\Omega^n_{\P^1}E$ which shows that  $s_0\Omega^n_{\P^1}E$ admits transfers up to homotopy and higher homotopies up to some level?
\end{enumerate}

Part of the motivation for this paper came out of discussions with H\'el\`ene Esnault concerning the (admittedly vague) question: Given a smooth projective variety $X$ over some field $k$, that admits a 0-cycle of degree 1, are there ``motivic" properties of $X$ that lead to the existence of a $k$-point?  The fact that the existence of 0-cycles of degree 1 has something to do with the transfer maps from 0-cycles on $X_L$ to zero-cycles on $X$, as $L$ runs over finite field extensions of $k$, while the lack of a transfer map in general appears to be closely related to the subtlety of the existence of $k$-points led to our inquiry into the ``motivic" nature of the spaces $\Omega_{\P^1}^n\Sigma_{\P^1}^nX_+$, or rather, their associated $S^1$-spectra.

\ \\
{\bf Notation and conventions}.
In this paper, we will be passing from the unstable motivic (pointed) homotopy category over $k$,  $\sH_\bullet(k)$, to the homotopy category of motivic $S^1$-spectra over $k$, $\SH_{S^1}(k)$, via the infinite (simplicial) suspension functor
\[
\Sigma_s^\infty:\sH_\bullet(k)\to \SH_{S^1}(k)
\]
For a smooth $k$-scheme $X\in \Sm/k$ and a subscheme $Y$ of $X$ (sometimes closed, sometimes open), we let $(X,Y)$ denote the homotopy push-out in the diagram
\[
\xymatrix{
Y\ar[r]\ar[d]&X\\
\Spec k}
\]
and as usual write $X_+$ for $(X\amalg\Spec k,\Spec k)$. We often denote $\Spec k$ by $*$. For an object $S$ of $\sH_\bullet(k)$, we often use $S$ to denote $\Sigma_s^\infty S\in \SH_{S^1}(k)$ when the context makes the meaning clear; we also use this convention when passing to various localizations of $\SH_{S^1}(k)$.

Regarding the categories $\Spt_{S^1}(k)$, $\SH_{S^1}(k)$ and $\SH(k)$, we will use the notation spelled out in \cite{LevineHC}. In addition to this source, we refer the reader to \cite{Jardine, MorelLec, MorelVoev, OstRond2, OstRond2, VoevSlice}. For details on the category $\DM^\eff(k)$, we refer the reader to \cite{CisDeg,FSV}.\\

\noindent{\em Dedication}. This paper is warmly dedicated to Andrei Suslin, who has provided me more inspiration than I can hope to tell.

\section{Infinite $\P^1$-loop spectra} We first consider the case of the 0-$S^1$-spectrum of a $\P^1$-spectrum. We let
\begin{align*}
&\Omega^\infty_{\P^1}:\SH(k)\to \SH_{S^1}(k)\\
&\Omega^\infty_{\P^1,\mot}:\DM(k)\to \DM^\eff(k)
\end{align*}
be the (derived) 0-spectrum (resp. 0-complex) functor,  let
\begin{align*}
&\EM:\DM(k)\to \SH(k)\\
&\EM^\eff:\DM^\eff(k)\to \SH_{S^1}(k)
\end{align*}
the respective Eilenberg-Maclane spectrum functors.

\begin{thm} Fix an integer $n\ge0$. Then there is a functor
\[
\Mot^\eff(s_n):\SH(k)\to \DM^\eff(k)
\]
and a natural isomorphism
\[
\phi_n:\EM^\eff\circ \Mot^\eff(s_n)\to s^\eff_n\circ\Omega^\infty_{\P^1}
\]
of functors from $\SH(k)$ to $\SH_{S^1}(k)$.In other words, for $\sE\in \SH(k)$, there is a canonical lifting of the slice $s^\eff_n(\Omega^\infty_{\P^1}\sE)$ to a motive $ \Mot^\eff(s_n)(\sE)$.
\end{thm}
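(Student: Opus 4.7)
The strategy is to produce $\Mot^\eff(s_n)$ as a composite
\[
\SH(k)\xrightarrow{\ \pi_n^{\mot}\ }\DM(k)\xrightarrow{\ \Omega^\infty_{\P^1,\mot}\ }\DM^\eff(k),
\]
where $\pi_n^{\mot}$ is the functor that encodes the Voevodsky-R\"ondigs-Ostv\ae{}r-Pelaez motivic enrichment of the $n$-th slice: for $\sE\in \SH(k)$, one has a functorial isomorphism $s_n\sE\cong \EM(\pi_n^{\mot}\sE)$ in $\SH(k)$, lifted to the level of strict $\HZ$-modules (and hence of $\DM(k)$) by Pelaez. With this in hand, the proposed isomorphism $\phi_n$ reads
\[
\EM^\eff\bigl(\Omega^\infty_{\P^1,\mot}\,\pi_n^{\mot}\sE\bigr)\;\cong\;\Omega^\infty_{\P^1}\,\EM(\pi_n^{\mot}\sE)\;\cong\;\Omega^\infty_{\P^1}(s_n\sE)\;\cong\;s_n^\eff(\Omega^\infty_{\P^1}\sE).
\]
So the task breaks into two independent comparisons.

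\textbf{First comparison: $\EM^\eff\circ\Omega^\infty_{\P^1,\mot}\cong\Omega^\infty_{\P^1}\circ\EM$.} This is an essentially formal statement once one knows (via Ostv\ae r--R\"ondigs, together with Pelaez's model-categorical refinement) that both sides of the equivalence $\HZ\text{-}\mathbf{Mod}\simeq DM(k)$ are compatible with stabilization and $\P^1$-loops. Concretely, $\Omega^\infty_{\P^1}$ applied to an $\HZ$-module produces an $\Omega^\infty_{\P^1}\HZ$-module; since the $0$-$S^1$-spectrum of $\HZ$ represents effective motivic cohomology, this module lives over the effective Eilenberg-Maclane spectrum, and the equivalence on the effective side identifies it with $\Omega^\infty_{\P^1,\mot}$ of the corresponding motive. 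I would carry this out by checking the identification on the compact generators $\Sigma^\infty_{\P^1}X_+$ for $X\in\Sm/k$ and extending by preservation of homotopy colimits.

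\textbf{Second comparison (the crux): $s_n^\eff\circ\Omega^\infty_{\P^1}\cong\Omega^\infty_{\P^1}\circ s_n$.} This is the main obstacle, and I expect it to require real work. The point is that the $\P^1$-slice tower on $\SH(k)$ and the effective ($S^1$-)slice tower on $\SH_{S^1}(k)$ are constructed from different truncation operations, so one must show that $\Omega^\infty_{\P^1}$ intertwines them. I would proceed in two steps. First, show $\Omega^\infty_{\P^1}$ sends $\Sigma^{n+1}_{\P^1}\SH^\eff(k)$ into the $(n+1)$-connective piece of the effective slice tower on $\SH_{S^1}(k)$; this follows from inspection of the generators $\Sigma^{n+1}_{\P^1}\Sigma^\infty_{\P^1}X_+$ and the fact that $\Omega^\infty_{\P^1}\Sigma_{\P^1}=\Sigma_s\Omega^\infty_{\P^1}\Sigma_{\G_m}$, which lands in the required subcategory. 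Second, identify $\Omega^\infty_{\P^1}f_n\sE$ with $f_n^\eff\Omega^\infty_{\P^1}\sE$ by a universal-property argument: both are the right-adjoint approximation of $\Omega^\infty_{\P^1}\sE$ by objects in the appropriate localizing subcategory. Taking cofibers then yields the desired identification of slices.

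\textbf{Main obstacle and naturality.} The difficult part is genuinely the second comparison: $\Omega^\infty_{\P^1}$ is a right adjoint and so commutes automatically only with limits and the coreflective part of the slice tower, but the slice $s_n$ involves taking a cofiber, and one must show that this cofiber formation is compatible with $\Omega^\infty_{\P^1}$ after both truncations. I expect the cleanest path is to verify the comparison on a set of generators of $\SH(k)$ and then invoke compact generation and Pelaez's model-category-level slice functors to obtain it in general, with functoriality of $\phi_n$ being automatic from the construction as a composite of natural transformations.
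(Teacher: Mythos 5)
Your proposal reproduces the paper's argument: define $\Mot^\eff(s_n)$ as $\Omega^\infty_{\P^1,\mot}$ post-composed with Pelaez's motivic lift $\Mot(s_n)$ of the $\P^1$-slice, and obtain $\phi_n$ from the chain $\EM^\eff\circ\Omega^\infty_{\P^1,\mot}\cong\Omega^\infty_{\P^1}\circ\EM$, Pelaez's $\EM\circ\Mot(s_n)\cong s_n$, and $\Omega^\infty_{\P^1}\circ s_n\cong s_n^\eff\circ\Omega^\infty_{\P^1}$. The paper treats the two commutation isomorphisms as already known (the second is the de-looping formula from \cite{LevineHC}), whereas you correctly flag that they need checking, but the decomposition and the identity of the functor are the same.
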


\begin{proof} By Pelaez, there is a functor
\[
\Mot(s_n):\SH(k)\to \DM(k)
\]
and a natural isomorphism
\[
\Phi_n:\EM\circ\Mot(s_n)\to s_n
\]
i.e., the slice $s_n\sE$ lifts canonically to a motive $\Mot(s_n)(\sE)$. Now apply the 0-complex functor to define
\[
 \Mot^\eff(s_n):=\Omega^\infty_{{\P^1},\mot}\circ \Mot(s_n).
 \]
 We have canonical isomorphisms
 \begin{align*}
 \EM^\eff\circ \Omega^\infty_{{\P^1},\mot}\circ \Mot(s_n)&\cong \Omega^\infty_{\P^1}\circ \EM\circ  \Mot(s_n)\\
 \cong \Omega^\infty_{\P^1}\circ s_n\\
 \cong s_n^\eff\circ  \Omega^\infty_{\P^1}
 \end{align*}
 as desired.
 \end{proof}
 
 In other words, the slices of an infinite ${\P^1}$-loop spectrum are effective motives.
 
 \section{An example}\label{sec:Example} We now show that the 0th slice of an $S^1$-spectrum is not always a motive. In fact, we will give an example of an Eilenberg-Maclane spectrum whose $0$th slice does not have transfers.
 
 For this, note the following: 
 \begin{lem} \label{lem:Galois} Let $p:Y\to X$ be a finite Galois cover in $\Sm/k$, with Galois group $G$. Let $\sF$ be a presheaf with transfers on $\Sm/k$. Then the composition
 \[
 p^*\circ p_*:\sF(Y)\to \sF(Y)
 \]
 is given by
 \[
 p^*\circ p_*(x)=\sum_{g\in G}g^*(x)
 \]
 \end{lem}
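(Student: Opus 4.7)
The plan is to translate the statement into the language of finite correspondences and reduce it to an explicit cycle-theoretic computation on $Y\times_XY$.

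First, I would recall the definition. A presheaf with transfers is an additive contravariant functor $\sF\colon\SmCor(k)\to\Ab$, and pullback along a morphism $f\colon Y\to X$ is by definition $\sF$ applied to the graph correspondence $[\Gamma_f]\in\Cor(Y,X)$. Since $p\colon Y\to X$ is finite, the transposed cycle ${}^t[\Gamma_p]\subset X\times Y$ is finite over $X$ and therefore defines a finite correspondence ${}^t[\Gamma_p]\in\Cor(X,Y)$; the transfer $p_*\colon\sF(Y)\to\sF(X)$ is by definition $\sF({}^t[\Gamma_p])$. Consequently
$$
p^*\circ p_* \;=\; \sF([\Gamma_p])\circ \sF({}^t[\Gamma_p]) \;=\; \sF\bigl({}^t[\Gamma_p]\circ[\Gamma_p]\bigr),
$$
so everything reduces to identifying the composition ${}^t[\Gamma_p]\circ[\Gamma_p]\in\Cor(Y,Y)$ with $\sum_{g\in G}[\Gamma_g]$.

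Second, I would unwind the composition via its definition in $\SmCor(k)$:
$$
{}^t[\Gamma_p]\circ[\Gamma_p] \;=\; (\pi_{13})_*\bigl(([\Gamma_p]\times Y)\cdot_{Y\times X\times Y}(Y\times{}^t[\Gamma_p])\bigr).
$$
A direct set-theoretic check shows that the intersection on $Y\times X\times Y$ is supported on $\{(y,p(y),y'):p(y)=p(y')\}$, which is just $Y\times_XY$ embedded via $(y,y')\mapsto(y,p(y),y')$, and that $\pi_{13}$ restricts to the natural map $Y\times_XY\to Y\times Y$. Since a finite Galois cover in $\Sm/k$ is automatically étale, the two factors $[\Gamma_p]\times Y$ and $Y\times{}^t[\Gamma_p]$ meet transversally in a smooth subscheme, so every intersection multiplicity is $1$ and the cycle-theoretic intersection agrees with the scheme-theoretic fibre product $Y\times_XY$.

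Third, I would invoke the standard Galois decomposition: because $G$ acts simply transitively on the geometric fibres of $p$, there is a canonical isomorphism of $X$-schemes
$$
\coprod_{g\in G}Y \;\xrightarrow{\;\sim\;}\; Y\times_XY,\qquad (y)_g\longmapsto(y,g(y)),
$$
under which the $g$-th component maps isomorphically onto the graph $\Gamma_g\subset Y\times Y$. Combining this with the previous step gives ${}^t[\Gamma_p]\circ[\Gamma_p]=\sum_{g\in G}[\Gamma_g]$ in $\Cor(Y,Y)$, and applying the additive functor $\sF$ yields $p^*\circ p_*(x)=\sum_{g\in G}g^*(x)$, as required.

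The main obstacle is purely bookkeeping: keeping straight that $\sF$ is contravariant (so composition of correspondences reverses when $\sF$ is applied), that the transfer really corresponds to the transposed graph rather than to $[\Gamma_p]$ itself, and that étaleness is what guarantees all intersection multiplicities equal one. Once these points are set up correctly, the Galois decomposition $Y\times_XY\cong\coprod_GY$ carries the computation to its conclusion.
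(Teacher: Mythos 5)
Your proposal is correct and takes essentially the same route as the paper: the paper's entire proof is the one-line assertion $\Gamma_p^t\circ\Gamma_p=\sum_{g\in G}\Gamma_g$ (with "one computes" in place of details), and your argument simply fills in that computation — unwinding the composition of correspondences on $Y\times X\times Y$, identifying the (transverse, since $p$ is finite étale) intersection with $Y\times_XY$, and invoking the Galois decomposition $Y\times_XY\cong\coprod_{g\in G}Y$ to read off the cycle $\sum_g\Gamma_g$.
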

 
 \begin{proof} Letting $\Gamma_p\subset Y\times X$ be the graph of $p$, and $\Gamma_g\subset Y\times Y$ the graph of $g:Y\to Y$ for $g\in G$,  one computes that
 \[
 \Gamma_p^t\circ \Gamma_p=\sum_{g\in G}\Gamma_g,
 \]
 whence the result.
 \end{proof}
 
 Now let $C$ be a smooth projective  curve over $k$, having no $k$-rational points. We assume that $C$ has genus $g>0$,  so every map $\A^1_F\to C_F$ over a field $F\supset k$ is constant ($C$ is {\em $\A^1$-rigid}).

Let $\Z_C$ be the representable presheaf:
 \[
 \Z_C(Y):=\Z[\Hom_{\Sm/k}(Y,C)]
 \]
$\Z_C$ is automatically a Nisnevich sheaf; since $C$ is $\A^1$-rigid, $\Z_C$ is also homotopy invariant. Furthermore $\Z_C$ is a {\em birational} sheaf, that is, for each dense open immersion $U\to Y$ in $\Sm/k$, the restriction map $\Z_C(Y)\to \Z_C(U)$ is an isomorphism. Indeed, it is the same to say that $\Hom_{\Sm/k}(Y,C)\to \Hom_{\Sm/k}(U,C)$ is an isomorphism. If now $f:U\to C$ is a morphism, then the closure $\bar{\Gamma}$ in $Y\times C$ of the graph of $f$ maps birationally to $Y$ via the projection. But since $Y$ is regular,  each  fiber of $\bar{\Gamma}\to Y$ is rationally connected, hence maps to a point of $C$, and thus $\bar{\Gamma}\to Y$ is birational and 1-1. By Zariski's main theorem, $\bar{\Gamma}\to Y$ is an isomorphism, hence $f$ extends to $\bar{f}:Y\to C$, as claimed.

Next, $\Z_C$ satisfies Nisnevich excision. This is just a general property of birational sheaves. In fact, let
\[
\xymatrix{
V\ar[r]^{j_V}\ar[d]_{f_{|V}}&Y\ar[d]^f\\
U\ar[r]_{j_U}&X}
\]
be an elementary Nisnevich square, i.e., the square is cartesian, $f$ is \'etale, $j_U$ and $j_V$ are open immersions, and $f$ induces an isomorphism $Y\setminus V\to X\setminus U$. We may assume that $U$ and $V$ are dense in $X$ and $Y$. Let $\sF$ be a birational sheaf on $\Sm/k$, and apply $\sF$ to this diagram. This gives us the square
\[
\xymatrix{
\sF(X)\ar[r]^{j_U^*}\ar[d]_{f^*}&\sF(U)\ar[d]^{f_{|V}^*}\\
\sF(Y)\ar[r]_{j_V^*}&\sF(V)}
\]
As the horizontal arrows are isomorphisms, this square is homotopy cartesian, as desired. 

In particular, the (simplicial) Eilenberg-Maclane spectrum $\eM_s(\Z_C)$ is weakly equivalent as a presheaf on $\Sm/k$ to its fibrant model in $\SH_{S^1}(k)$ ($\eM_s(\Z_C)$ is {\em quasi-fibrant}). In addition, the canonical map
\[
\eM_s(\Z_C)\to s_0(\eM_s(\Z_C))
\]
is an isomorphism in $\SH_{S^1}(k)$. Indeed, since $\eM_s(\Z_C)$ is quasi-fibrant, a quasi-fibrant model for $s_0(\eM_s(\Z_C))$ may be computed by using the method of \cite[\S 5]{LevineHC}  as follows: Take $Y\in \Sm/k$ and let $F=k(Y)$. Let $\Delta^n_{F,0}$ be the {\em semi-local} algebraic $n$-simplex, that is, $\Delta^n_{F,0}=\Spec(\sO_{\Delta^n_F,v})$, where $v=\{v_0,\ldots, v_n\}$ is the set of vertices in $\Delta^n_F$, and  $\sO_{\Delta^n_F,v}$ is the semi-local ring of $v$ in $\Delta^n_F$. The assignment $n\mapsto \Delta^n_{F,0}$ forms a cosimplicial subscheme of $n\mapsto \Delta^n_F$ and for a quasi-fibrant $S^1$-spectrum $E$, there is a natural isomorphism in $\SH$
\[
s_0(E)(Y)\cong E(\Delta^*_{F,0}),
\]
where $E(\Delta^*_{F,0})$ denotes the total spectrum of the simplicial spectrum $n\mapsto E(\Delta^n_{F,0})$. If now $E$ happens to be a birational $S^1$-spectrum, meaning that $j^*:E(Y)\to E(U)$ is a weak equivalence for each dense open immersion $j:U\to Y$ in $\Sm/k$, then the restriction map
\[
j^*:E(\Delta^*_Y)\to E(\Delta^*_{F,0})\cong s_0(E)(Y)
\]
is a weak equivalence. Thus, as $E$ is quasi-fibrant and hence homotopy invariant, we have the sequence of isomorphisms in $\SH$
\[
E(Y)\to E(\Delta^*_Y)\to E(\Delta^*_{F,0})\cong s_0(E)(Y),
\]
and hence $E\to s_0(E)$ is an isomorphism in $\SH_{S^1}(k)$. Taking $E=\eM_s(\Z_C)$ verifies our claim.

Finally, $\Z_C$ does not admit transfers. Indeed, suppose $\Z_C$ has transfers. Let $k\to L$ be a Galois extension such that $C(L)\neq\0$; let $G$ be the Galois group. Since $\Z_C(k)=\{0\}$ (as we have assumed that $C(k)=\0$), the push-forward map
\[
p_*:\Z_C(L)\to \Z_C(k)
\]
is the zero map, hence $p^*\circ p_*=0$. But for each $L$-point $x$ of $C$, lemma~\ref{lem:Galois} tells us that
\[
p^*\circ p_*(x)=\sum_{g\in G}x^g\neq 0,
\]
a contradiction. 

Thus the homotopy sheaf
\[
\pi_0(s_0\eM_s(\Z_C))=\pi_0(\eM_s(\Z_C))=\Z_C
\]
does not admit transfers, giving us the example we were seeking.

\section{Co-transfer}\label{sec:cotrans}
We recall how one uses the deformation to the normal bundle to define the ``co-transfer"
\[
(\P^1_F,1)\to (\P^1_F(\bar{x}),1)
\]
for  closed point $\bar{x}\in \A^1_F\subset \P^1_F$, with chosen generator $\bar{f}\in m_{\bar{x}}/m_{\bar{x}}^2$. For later use, we work in a somewhat more general setting: Let $R$ be a semi-local $k$-algebra, smooth and essentially of finite type over $k$, and $\bar{x}$ a regular closed subscheme of $\P^1_R\setminus\{1\}\subset \P^1_R$, such that the projection $\bar{x}\to \Spec R$ is finite. We choose a generator $\bar{f}\in m_{\bar{x}}/m_{\bar{x}}^2$, which we lift to a generator  $f$ for the ideal $m_{\bar{x}}\subset \sO_{\P^1,\bar{x}}$.  

Let $\mu:W_{\bar{x}}\to \P^1\times\A^1_R$ be the blow-up of $\P^1\times\A^1_R$ along $(\bar{x},0)$ with exceptional divisor $E$. Let $s_{\bar{x}}$, $C_0$ be the proper transforms $s_{\bar{x}}=\mu^{-1}[\bar{x}\times\A^1]$, $C_0=\mu^{-1}[\P^1\times0]$. Let $t$ be the standard parameter on $\A^1$; the rational function $f/t$ restricts to a well-defined rational parameter on $E$. We identify $E$ with $\P^1_{\bar{x}}$ by sending $s_{\bar x}\cap E$ to $0$, $C_0\cap E$ to $1$ and the section on $E$ defined by  $f/t=1$ to $\infty$. Note that this identification depends only  on $\bar{f}\in m_{\bar{x}}/m^2_{\bar{x}}$. We denote these subschemes of $E$ by $0,\infty, 1$, respectively.

We let $W_{\bar{x}}^{(s_{\bar{x}})}$, $E^{(0)}$, $(\P^1_F)^{(0)}$ be following homotopy push-outs
\begin{align*}
&W_{\bar{x}}^{(s_{\bar{x}})}:=(W_{\bar{x}},W_{\bar{x}}\setminus s_{\bar{x}}),\\
&E^{(0)}:=(E,E\setminus 0),\\
&(\P^1_R)^{(0)}:=(\P^1_R,\P^1_R\setminus 0).
\end{align*}

Since $(\A^1_{\bar{x}},0)\cong *$ in $\sH_\bullet(k)$, the respective identity maps induce isomorphisms
\begin{align*}
&(E,1)\to E^{(0)},\\
&(\P^1_R,1)\to (\P^1_R)^{(0)}.
\end{align*}
Combining with the isomorphism $(\P^1_{\bar{x}},1)\cong (E,1)$, the inclusion $E\to W_{\bar{x}}$ induces the map
\[
i_0:(\P^1_{\bar{x}},1)\to W_{\bar{x}}^{(s_{\bar{x}})}.
\]
The homotopy purity theorem of Morel-Voevodsky \cite[theorem 2.23]{MorelVoev} implies as a special case  that $i_0$ is an isomorphism in $\sH_\bullet(k)$. We prove a modification of this result. Let $s_1:=\A^1_R\times 1$. We write $W$ for $W_{\bar{x}}$, etc., when the context makes the meaning clear.

\begin{lem}\label{lem:Blowup1} The identity on $W$ induces an isomorphism 
\[
 (W,C_0\cup s_1)\to W^{(s_{\bar{x}})}.
 \]
in $\sH_\bullet(k)$.
\end{lem}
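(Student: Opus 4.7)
The plan is to prove that the inclusion $C_0\cup s_1 \hookrightarrow W\setminus s_{\bar x}$ is an $\A^1$-weak equivalence; the lemma then follows because the homotopy push-out $(W,Y):=\hocolim(*\leftarrow Y\to W)$ preserves weak equivalences in the variable $Y$.

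The main tool is a natural $\A^1$-action on $W\setminus s_{\bar x}$ deformation-retracting it onto $C_0$. On $\P^1_R\times \A^1_R$, consider the morphism
\[
\A^1_\lambda \times (\P^1_R\times \A^1_R) \to \P^1_R\times \A^1_R,\qquad (\lambda,(p,t))\mapsto (p,\lambda t),
\]
which restricts to a $\G_m$-action scaling the $\A^1_R$-factor, and at $\lambda = 0$ projects onto $\P^1_R\times\{0\}$. I would verify that this morphism lifts to $H:\A^1_\lambda\times (W\setminus s_{\bar x})\to W$. In the blow-up chart with coordinates $(f,v)$ (and blow-down $(f,v)\mapsto (f,fv)=(f,t)$), the lifted action is $(\lambda,f,v)\mapsto (f,\lambda v)$, manifestly algebraic. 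In the other standard chart, coordinates $(u,t)=(f/t,t)$, $s_{\bar x}$ is the locus $\{u=0\}$; on the complement where $u$ is invertible, the $\lambda\ne 0$ formula $(u/\lambda,\lambda t)$ extends across $\lambda=0$ by passing to the $(f,v)$-chart via $(tu,\lambda/u)$. By the universal property of the blow-up, this amounts to the pull-back of the ideal of $\bar x\times 0$ being principal on $\A^1_\lambda\times (W\setminus s_{\bar x})$.

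I then check that $H$ preserves $W\setminus s_{\bar x}$: for $\lambda\ne 0$, $s_{\bar x}$ is $\G_m$-invariant, so its complement is too; for $\lambda = 0$, the image of $H(0,-)$ lies in $C_0$ (in the $(f,v)$-chart this is $\{v=0\}$), which is disjoint from $s_{\bar x}$. Setting $\lambda = 1$ gives the identity and $\lambda = 0$ gives a retraction $r: W\setminus s_{\bar x}\to C_0$ that restricts to the identity on $C_0$, so $C_0\hookrightarrow W\setminus s_{\bar x}$ is an $\A^1$-deformation retract, hence an $\A^1$-weak equivalence.

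Finally, $s_1\cong \A^1_R$ meets $C_0$ in the single point $(1,0)$, since $1\notin \bar x$ and the blow-down is an isomorphism there. Attaching the $\A^1$-contractible $s_1$ along a point shows $C_0\hookrightarrow C_0\cup s_1$ is an $\A^1$-weak equivalence, and two-out-of-three applied to the factorization $C_0\hookrightarrow C_0\cup s_1\hookrightarrow W\setminus s_{\bar x}$ completes the argument. The main obstacle is extending the $\G_m$-action on $\P^1_R\times \A^1_R$ to an $\A^1$-action on $W\setminus s_{\bar x}$ at $\lambda=0$; this is exactly where removing $s_{\bar x}$ is essential, because at the exceptional intersection $s_{\bar x}\cap E=\{0\}$ the $\lambda = 0$-limit is not a well-defined morphism.
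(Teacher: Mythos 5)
Your proof is correct, but it takes a genuinely different route from the paper's. You treat $s_1$ the same way (contract $s_1\cong\A^1_R$ to the point $C_0\cap s_1$), but for the key step that $C_0\hookrightarrow W\setminus s_{\bar x}$ is an $\A^1$-weak equivalence the paper argues via cofiber sequences and homotopy purity: setting $U=W\setminus s_{\bar x}$, $V=U\setminus E$, $C_0^V=V\cap C_0$, it uses Morel--Voevodsky purity to show the square on $C_0^V, V, C_0, U$ is homotopy co-cartesian (the cofibers of the horizontal inclusions are Thom spaces of the trivial normal bundle over $\A^1_{\bar x}$ and over $0_{\bar x}$ respectively, and $0_{\bar x}\hookrightarrow\A^1_{\bar x}$ is a weak equivalence), then identifies $V$, $C_0$, $C_0^V$ with their images in $\P^1\times\A^1_R$ under the blow-down and invokes $\A^1$-contractibility to collapse the resulting push-out. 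You instead construct an explicit algebraic $\A^1$-deformation retraction of $W\setminus s_{\bar x}$ onto $C_0$ by lifting the scaling of the $\A^1_R$-factor through the blow-up; the chart-by-chart (or universal-property) verification that the lift exists on $\A^1_\lambda\times(W\setminus s_{\bar x})$, and would fail on $s_{\bar x}$ because the pulled-back ideal $(f,\lambda t)$ is not locally principal there at $\lambda=0$, is the technical heart of your argument, and it checks out. Your route is more elementary and geometric, avoiding the purity theorem and the co-cartesian square bookkeeping, and it makes transparent why removing $s_{\bar x}$ is exactly what is needed; the paper's route reuses Thom-space machinery it needs elsewhere and sidesteps the explicit verification that the lifted homotopy is a morphism.
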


\begin{proof} As $s_1\cong\A^1_R$, with $C_0\cap s_1=0$, the inclusion $(C_0,0)\to (C_0\cup s_1,0)$ is an isomorphism in $\sH_\bullet(k)$. Thus, we need to show that $ (W,C_0)\to W^{(s_{\bar{x}})}$ is an isomorphism in $\sH_\bullet(k)$. As 
$W^{(s_{\bar{x}})}=(W,W\setminus  s_{\bar{x}})$, we need to show that $C_0\to W\setminus  s_{\bar{x}}$ is an isomorphism in $\sH(k)$.

Let $U=W\setminus s_{\bar{x}}$,  $E^U=E\cap U$, $V=U\setminus E^U$ and $C_0^V:=V\cap C_0$. We first show that the diagram
\[
\xymatrix{
C_0^V\ar[r]\ar[d]&V\ar[d]\\
C_0\ar[r]&U
}
\]
is co-cartesian in $\sH(k)$. For this, we know from Morel-Voevodsky \cite{MorelVoev} that the cofiber of $V\to U$ is isomorphic to the Thom space of the normal bundle $N$ of $E^U$ in $U$. As $E\cong \P^1_{\bar{x}}$ and $E\cap s_{\bar{x}}\to\bar{x}$ is an isomorphism, $E^U$ is isomorphic to $\A^1_{\bar{x}}$ and, as $\bar{x}$ is semi-local, $N$ is the trivial bundle. Thus we have the co-cartesian diagram
\[
\xymatrix{
V\ar[r]\ar[d]&U\ar[d]\\
{*}\ar[r]&\Sigma_{\P^1}\A^1_{\bar{x}+}.
}
\]
Similarly, we have the co-cartesian diagram
\[
\xymatrix{
C_0^V\ar[r]\ar[d]&C_0\ar[d]\\
{*}\ar[r]&\Sigma_{\P^1} 0_{\bar{x}+}.
}
\]
As the inclusion $0_{\bar{x}}\to \A^1_{\bar{x}}$ is an isomorphism in $\sH(k)$, our first diagram is co-cartesian, as desired.

Next, we note that the blow-down map $\mu:W\to \P^1\times\A^1_R$ induces isomorphisms
\begin{align*}
&V\to \P^1\times\A^1_R\setminus \bar{x}\times\A^1_R \\
&C_0\to \P^1_R\times 0\\
&C_0^V\to \P^1\times0\setminus\{(\bar{x},0)\}
\end{align*}
Thus $U$ is isomorphic in $\sH(k)$ to the homotopy push-out in the diagram
\[
\xymatrix{
 \P^1\times0\setminus\{(\bar{x},0)\}\ar[r]\ar[d]& \P^1\times\A^1_R\setminus \bar{x}\times\A^1_R \\
 \P^1_R\times 0
}
\]
But by the contradictibility of $\A^1_R$, the upper horizontal arrow is an isomorphism in $\sH(k)$, and thus $C_0\to U$ is an isomorphism in $\sH(k)$, completing the proof.
\end{proof} 

\begin{lem}\label{lem:Blowup2} The inclusion $E\to W$ induces an  isomorphism
\[
(\P^1_{\bar{x}},1)\to (W_{\bar{x}},C_0\cup s_1)
\]
in $\sH_\bullet(k)$.
\end{lem}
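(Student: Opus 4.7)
The plan is to deduce the lemma directly from Lemma~\ref{lem:Blowup1} and the Morel--Voevodsky homotopy purity isomorphism
\[
i_0 : (\P^1_{\bar{x}}, 1) \xrightarrow{\sim} W^{(s_{\bar{x}})}
\]
recorded just before Lemma~\ref{lem:Blowup1}, via a simple two-out-of-three argument.

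First I would check that the inclusion $E \hookrightarrow W$, together with the identification $(\P^1_{\bar{x}}, 1) \cong (E, 1)$, yields a well-defined morphism of pairs
\[
j : (\P^1_{\bar{x}}, 1) \to (W, C_0 \cup s_1)
\]
in $\sH_\bullet(k)$: under the chosen identification, $1 \in \P^1_{\bar{x}}$ corresponds to $E \cap C_0$, which lies in $C_0 \cup s_1$. Next I would verify that $s_{\bar{x}}$ is disjoint from $C_0 \cup s_1$ in $W$. The divisors $\bar{x} \times \A^1_R$ and $\P^1 \times 0$ meet transversally at the blow-up center $(\bar{x}, 0)$, so their proper transforms $s_{\bar{x}}$ and $C_0$ are disjoint in $W$; and $s_{\bar{x}}$ and $s_1$ lie over the disjoint closed subschemes $\bar{x} \subset \P^1_R \setminus \{1\}$ and $\{1\} \subset \P^1_R$, hence are disjoint as well.

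This disjointness shows $C_0 \cup s_1 \subset W \setminus s_{\bar{x}}$, so the identity on $W$ induces a morphism of pairs
\[
\iota : (W, C_0 \cup s_1) \to (W, W \setminus s_{\bar{x}}) = W^{(s_{\bar{x}})},
\]
and Lemma~\ref{lem:Blowup1} asserts precisely that $\iota$ is an isomorphism in $\sH_\bullet(k)$. Chasing definitions, the composition $\iota \circ j$ coincides with $i_0$: both are induced by the inclusion $E \hookrightarrow W$ and differ only in which closed subspace of $W$ is collapsed along the way. Since $i_0$ and $\iota$ are isomorphisms in $\sH_\bullet(k)$, so is $j$, which is the desired conclusion.

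The only real content beyond Lemma~\ref{lem:Blowup1} and the purity isomorphism $i_0$ is the verification that $s_{\bar{x}} \cap (C_0 \cup s_1) = \emptyset$ and the routine identification of $\iota \circ j$ with $i_0$; these are the main (and fairly minor) points to carry out carefully.
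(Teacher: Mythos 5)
Your proposal is correct and follows essentially the same route as the paper: the paper's proof exhibits the same commutative triangle with $(\P^1_{\bar{x}},1)$, $(W,C_0\cup s_1)$, and $W^{(s_{\bar{x}})}$, identifies the diagonal arrow as the Morel--Voevodsky purity isomorphism and the vertical arrow as the isomorphism from Lemma~\ref{lem:Blowup1}, and concludes by two-out-of-three. Your additional checks (disjointness of $s_{\bar{x}}$ from $C_0\cup s_1$, and that $\iota\circ j$ is the purity map) are exactly the routine verifications the paper leaves implicit.
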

 
 \begin{proof} We have the commutative diagram
 \[
 \xymatrix{
 (\P^1_{\bar{x}} 1)\ar[r]\ar[rd]& (W,C_0\cup s_1)\ar[d]\\
 & W^{(s_{\bar{x}})}.
 }
 \]
 The diagonal arrow is an isomorphism in $\sH_\bullet(k)$ by Morel-Voevodsky; the vertical arrow is an isomorphism by lemma~\ref{lem:Blowup1}.
 \end{proof}
 
 One usually defines the ``co-transfer"
 \[
(\P^1_R,1)\to (\P^1_{\bar{x}},1)
 \]
 as the composition
 \[
 (\P^1_R,1)\xrightarrow{i_1}W_{\bar{x}}^{(s_{\bar{x}})}\xrightarrow{i_0^{-1}} (\P^1_{\bar{x}},1).
 \]
 Instead, we will use the composition $\co_{\bar{x},\bar{f}}$ in $\sH_\bullet(k)$,
  \[
 (\P^1_R,1)\xrightarrow{i_1}(W_{\bar{x}},C_0\cup s_1)\xrightarrow{i_0^{-1}} (\P^1_{\bar{x}},1).
 \]
 which is well-defined by lemma~\ref{lem:Blowup1}. Comparing with the usual co-transfer  via the isomorphism of lemma~\ref{lem:Blowup2},
 \[
 (W_{\bar{x}},C_0\cup s_1)\xrightarrow{\id_W} W_{\bar{x}}^{(s_{\bar{x}})},
 \]
 shows that the two co-transfer maps agree.

We examine some properties of $\co_{\bar{x},\bar{f}}$. Let $s$ be the standard parameter on $\P^1$.  

\begin{lem}\label{lem:cotrid}Take $\bar{x}=0$, $f=s$.  The map $\co_{0,\bar{s}}:(\P_F^1,1)\to (\P_F^1,1)$ is the identity in $\sH_\bullet(k)$. Similarly, 
the map $\co_{\infty,\bar{s^{-1}}}:(\P_F^1,1)\to (\P_F^1,1)$ is the identity in $\sH_\bullet(k)$
\end{lem}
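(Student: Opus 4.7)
My plan is to reformulate the claim: since $i_0$ is an $\sH_\bullet(k)$-equivalence by Lemmas~\ref{lem:Blowup1} and~\ref{lem:Blowup2}, the identity $\co_{0,\bar s} = i_0^{-1}\circ i_1 = \id$ is equivalent to exhibiting a pointed $\A^1$-homotopy between $i_1$ and $i_0$ as morphisms $(\P^1_F,1)\to (W_{\bar x},C_0\cup s_1)$ in $\sH_\bullet(k)$, where we use the identification $\P^1_F=\P^1_{\bar x}$ coming from $\bar x=0$, $f=s$.

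In the chart $U_2=\Spec k[u,v]$ of $W_{\bar x}$ with $u=s/t$ and $v=t$, we have $i_1(S) = (u,v)=(S,1)$ (image of the fibre $\P^1\times\{1\}$) and $i_0(S) = (u,v)=(S/(S-1),0)$ (image of $E$ via $\varphi^{-1}(S)=S/(S-1)$). The naive interpolation $H(S,t) = \bigl((1-t)S/(S-1)+tS,\;t\bigr)$ has the correct endpoints but sends $(1,t)$ to $u=\infty$, which does not in general land in $C_0\cup s_1$. More fundamentally, since $C_0\cup s_1$ is a reducible nodal curve meeting at $(1,0)$, no single morphism $\A^1\to C_0\cup s_1$ can connect $(1,1)\in s_1$ to $i_0(1) = C_0\cap E\in C_0$; an irreducible image must lie in one component.

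To get around this, I would assemble the homotopy as a zig-zag, exploiting the fact that the divisors $[\P^1\times\{1\}]$ and $[E]$ on $W_{\bar x}$ are linearly equivalent modulo $[C_0]$: pulling back the equivalence $\{t=1\}\sim\{t=0\}$ on $\A^1$ along $\pi:W_{\bar x}\to\A^1$ and using $\pi^*\{0\} = [C_0]+[E]$ gives $[\P^1\times\{1\}]\sim [E]+[C_0]$. The chain would proceed as follows: (i) use the projection $\pi$ to deform $i_1$ to a morphism whose image lies in $\pi^{-1}(0)\cup s_1 = C_0\cup E\cup s_1$, with the deformation preserving the basepoint via the section $s_1$; (ii) on the deformed map, collapse the $C_0$-component using $\A^1$-contractibility (essentially the observation used in Lemma~\ref{lem:Blowup1} that the normal bundle to $E^U$ in $U$ is trivial); and (iii) identify the resulting map, supported on $E$, with $i_0$. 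The special case $\bar x=0$, $f=s$ is what makes all identifications canonical: the compatibility of the coordinate $s$ on $\P^1$ with the coordinate $s/t$ on $E$ under the generization $t\to 0$ ensures no nontrivial twisting enters, so the composed equivalence recovers the identity of $(\P^1_F,1)$. The second claim then follows from functoriality: the involution $\sigma:\P^1_F\to\P^1_F$, $s\mapsto s^{-1}$, fixes the basepoint $1$, exchanges $0$ and $\infty$, and carries the parameter $s$ at $0$ to the parameter $s^{-1}$ at $\infty$; naturality of the blow-up construction and of the cotransfer under $\sigma$ gives $\co_{\infty,\overline{s^{-1}}} = \sigma^{-1}\co_{0,\bar s}\sigma = \id$.

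The main obstacle will be step (i) of the zig-zag: translating the rational equivalence of divisors into an honest pointed $\A^1$-homotopy in $(W_{\bar x},C_0\cup s_1)$. The classical divisor-to-cycle machinery yields homotopies only up to basepoint corrections, and the reducibility of $C_0\cup s_1$ forces these corrections to be treated geometrically rather than formally. Concretely, one must produce an explicit morphism from $\P^1\times\A^1$ to $W_{\bar x}$ whose basepoint trace $\{1\}\times\A^1$ moves along $s_1$ down to its node with $C_0$, and then along $C_0$ to $C_0\cap E$, arranged as a composition of homotopies rather than a single family---this is the step where the full force of the special choice $\bar x=0$, $f=s$ is used.
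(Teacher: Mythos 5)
Your reduction is the right one — $\co_{0,\bar s}=\id$ is equivalent to showing that $i_0$ and $i_1$ are equal (in $\sH_\bullet(k)$) as maps $(\P^1_F,1)\to(W,C_0\cup s_1)$, by Lemmas~\ref{lem:Blowup1} and~\ref{lem:Blowup2}. And the deduction of the $\infty$-case from the $0$-case via the involution $s\mapsto s^{-1}$ is exactly what the paper does. But the heart of your argument, step (i) of the zig-zag, is precisely the gap you flag at the end, and it is a genuine one: the linear equivalence $[\P^1\times\{1\}]\sim[E]+[C_0]$ on $W$ is a statement about divisor classes, and there is no mechanism in $\sH_\bullet(k)$ (as opposed to $\DM$ or after taking transfers) to turn a rational equivalence of cycles into a pointed $\A^1$-homotopy of morphisms. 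The rational map $W\dashrightarrow \A^1$ you would want to use to ``slide'' $i_1$ along $\pi$ is not defined on all of $W$ (it is undefined on $E$ and $C_0$), so it does not give a morphism $\P^1\times\A^1\to W$ with the desired boundary values, and the reducibility of $C_0\cup s_1$ and of $\pi^{-1}(0)$ then blocks every obvious patching of partial homotopies. You have correctly identified where the difficulty lives, but you have not resolved it, and I do not see how to resolve it along these lines without essentially re-deriving the paper's construction.

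The paper's proof circumvents the homotopy-building problem entirely with a geometric trick: compactify the base $\A^1_F$ to $\P^1_F$, embed $W$ in the blow-up $\bar W$ of $\P^1\times\P^1_F$ at $(0,0)$, and then blow \emph{down} the $(-1)$-curve $C_0$ via $\rho:W\to W'$. After this contraction, $W'$ is a $\P^1$-bundle over $\P^1_F\setminus\{1\}$ carrying three disjoint sections $s_0'$, $s_1'$, $\Delta'$, which force a unique trivialization $W'\cong\P^1\times(\P^1_F\setminus\{1\})$. In this trivialization both $i_0$ and $i_1$ are sent (using the marked points $0,1,\infty$ on each side, and the fact that $\Delta\cap E$ is precisely the point $s/t=1$) to the standard fiber inclusion $i_0',i_1':(\P^1,1)\to(W',s_1')$, so $i_0'^{-1}\circ i_1'=\id$ on the nose — no homotopy needs to be constructed at all. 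This is where the choice $\bar x=0$, $f=s$ is actually used, not as a vague ``no twisting'' assurance but as the concrete identification making $i_0$ and $i_1$ literally the same map after the blow-down. I would recommend replacing your zig-zag plan with this blow-down argument, which completes in a few lines once set up.
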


\begin{proof} The assertion for $\co_{\infty,\bar{s^{-1}}}$ follows from the statement for $\co_{0,\bar{s}}$ by applying the automorphism of $(\P^1,1)$ exchanging $0$ and $\infty$. Identify $\A^1$ with  $\P^1\setminus\{1\}$, sending $0$ to $0$ and $1$ to $\infty$. This embeds the blow-up $W:=W_0$ in the blow-up $\bar{W}$ of $\P^1\times\P^1_F$ at $(0,0)$, with $i_1$  being the inclusion
\[
i_\infty:\P^1=\P^1\times\infty\to \P^1\times\P^1_F.
\]
The curve $C_0$ on $\bar{W}$ has self-intersection -1, and can thus be blown down via a morphism
\[
\rho:W\to W'.
\]

Let $\Delta\subset\P^1_F\times  \P^1\setminus\{1\}$ be the restriction of the diagonal in $\P^1_F\times\P^1$, giving us the proper transform $\mu^{-1}[\Delta]$ on $W$ and the image $\Delta'=\rho(\mu^{-1}[\Delta])$ on $W'$. Similarly, let   $s_0'=\rho(s_0)$, $s_1'=\rho(s_1)$; note that $\rho(C_0)\subset s_1'$. It is easy to check that $s_0'$, $\Delta'$ and $s_1'$ give disjoint sections of $W'\to \P^1\setminus\{1\}$, hence there is a unique isomorphism (over $\P^1\setminus\{1\}$) of $W'$ with $\P^1_F\times\P^1\setminus\{1\}$ sending $(s_0',s_1 ',\Delta')$ to $(0,1,\infty)\times\P^1\setminus\{1\}$. We have in addition the commutative diagram
\[
\xymatrix{
(\P_F^1,1)\ar[r]^-{i_0}\ar[dr]_{i_0'}&(W,C_0\cup s_1)\ar[d]^\rho&(\P^1_F,1)\ar[l]_-{i_1}\ar[dl]^{i_1'}\\
&(W',s_1')}
\]
where $i_0'$ is the canonical identification of $\P^1$ with the fiber of $W'$ over $0$, sending $(0,1,\infty)$ into $(s_0',s_1',\Delta')$, and $i_1'$ is defined similarly. Finally, the intersection $\Delta\cap E$ is the point $s/t=1$ used to define the isomorphism $E\cong\P^1_F$ in the definitiion of $\co_{0,\bar{s}}$.  It follows from lemma~\ref{lem:Blowup2} that all the morphisms in this diagram are isomorphisms in $\sH_\bullet(k)$; as $i_0^{\prime-1}\circ i_1'$ is clearly the identity, the lemma is proved.
\end{proof}

\begin{lem}\label{lem:FlatPullback} Let  $R\to R'$ be a flat extension of smooth semi-local  $k$-algebras, essentially of finite type over $k$.  Let $\bar{x}$ be a closed subscheme of $\P^1_R\setminus\{1\}$, finite and \'etale over $R$. Let $\bar{x}'=\bar{x}\times_RR'\subset \P^1_{R'}$. Let $\bar{f}$ be a generator for $m_{\bar{x}}/m^2_{\bar{x}}$, and let $\bar{f}'$ be the extension to $m_{\bar{x}'}/m^2_{\bar{x}'}$. Then the diagram 
\[
\xymatrixcolsep{40pt}
\xymatrix{
(\P^1_{R'},1)\ar[r]^{\co_{\bar{x}',\bar{f}'}}\ar[d]&(\P^1_{\bar{x}'},1)\ar[d]\\
(\P^1_{R},1)\ar[r]_{\co_{\bar{x},\bar{f}}}&(\P^1_{\bar{x}},1)
}
\]
commutes.
\end{lem}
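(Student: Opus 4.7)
The plan is to exhibit a morphism of the entire construction defining $\co_{\bar{x},\bar{f}}$ under base change, and then observe that both $i_0$-maps are isomorphisms by lemma~\ref{lem:Blowup2}, so the desired square in $\sH_\bullet(k)$ is the result of applying a single commutative diagram of pointed smooth $k$-schemes. The key input is that the formation of $W_{\bar{x}}$, together with all the distinguished subschemes used to build $\co_{\bar{x},\bar{f}}$, is compatible with flat base change on $R$.

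First I would show that under $R\to R'$ one has a cartesian square
\[
\xymatrix{
W_{\bar{x}'}\ar[r]\ar[d]_{\mu'} & W_{\bar{x}}\ar[d]^{\mu}\\
\P^1\times\A^1_{R'}\ar[r] & \P^1\times\A^1_R
}
\]
since blow-ups along regularly embedded closed subschemes commute with flat base change, and $(\bar{x}',0)=(\bar{x},0)\times_R R'$. Pulling back the distinguished loci, the exceptional divisor $E'$ is identified with $E\times_{\bar{x}}\bar{x}'$, the proper transforms satisfy $s_{\bar{x}'}=s_{\bar{x}}\times_R R'$ and $C_0'=C_0\times_R R'$, and the section $s_1'=\A^1_{R'}\times 1$ is the pullback of $s_1$.

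Next I would check that the trivialization $E\cong \P^1_{\bar{x}}$ pulls back to $E'\cong \P^1_{\bar{x}'}$. By hypothesis $\bar{f}'$ is the image of $\bar{f}$ under $m_{\bar{x}}/m_{\bar{x}}^2\to m_{\bar{x}'}/m_{\bar{x}'}^2$, so one may lift $\bar{f}$ to a generator $f$ of $m_{\bar{x}}\subset \sO_{\P^1_R,\bar{x}}$ and use its pullback $f'$ as the corresponding lift for $\bar{x}'$. Then the rational function $f'/t$ on $W_{\bar{x}'}$ is the pullback of $f/t$ on $W_{\bar{x}}$, so the three distinguished sections $s_{\bar{x}}\cap E$, $C_0\cap E$ and $\{f/t=1\}$ pull back to $s_{\bar{x}'}\cap E'$, $C_0'\cap E'$ and $\{f'/t=1\}$. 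Hence the identification $E\cong\P^1_{\bar{x}}$ used to define the co-transfer is strictly natural in $R$, and the maps $i_0, i_1$ fit into a commutative diagram of pointed $k$-spaces with their primed counterparts.

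Applying the functor to $\sH_\bullet(k)$, and using that $i_0$ and $i_0'$ are isomorphisms by lemma~\ref{lem:Blowup2}, gives the desired commutative square. I do not anticipate a real obstacle here; the only delicate point is to confirm that choosing any lift $f$ of $\bar{f}$ yields the same map in $\sH_\bullet(k)$, so that pulling $f$ back to $f'$ is a legitimate choice of lift for $\bar{f}'$. This is implicit in the observation (made after the definition of $\co_{\bar{x},\bar{f}}$) that the isomorphism $E\cong\P^1_{\bar{x}}$ depends only on the class $\bar{f}\in m_{\bar{x}}/m_{\bar{x}}^2$; once that is granted, the rest of the argument is pure naturality.
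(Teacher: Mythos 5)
The paper declares this lemma ``easy and left to the reader,'' so there is no proof in the text to compare against; your argument is the expected one and is correct. The key points are all there: blow-up formation commutes with flat base change (so $W_{\bar{x}'}\cong W_{\bar{x}}\times_R R'$), the strict transforms and exceptional divisor pull back compatibly, a chosen lift $f$ of $\bar f$ pulls back to a valid lift $f'$ of $\bar f'$ so that $f'/t$ is the pullback of $f/t$, and the well-definedness of the trivialization modulo the choice of lift (noted in the paper right after the construction) makes this choice legitimate; naturality of $i_0,i_1$ then gives the commuting square.
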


The proof is easy and is left to the reader.

\section{Co-group structure on $\P^1$}
Let $\G_m=\A^1\setminus\{0\}$, which we consider as a pointed scheme with base-point $1$. We recall the Mayer-Vietoris square for the standard cover of $\P^1$
\[
\xymatrix{
\G_m\ar[r]^{j_\infty}\ar[d]_{j_0}&\A^1\ar[d]^{i_\infty}\\
\A^1\ar[r]_{i_0}&\P^1}
\]
Here $i_0(\A^1)=\P^1\setminus\{\infty\}$, $i_\infty(\A^1)=\P^1\setminus\{0\}$ and the inclusions are normalized by sending $1$ to $1=(1:1)$. This gives us the isomorphism in $\sH_\bullet(k)$ of $\P^1$ with the homotopy push-out in the diagram
\[
\xymatrix{
\G_m\ar[r]^{j_\infty}\ar[d]_{j_0}&\A^1\\
\A^1&}
\]
combining with the contractibility of $\A^1$ gives us the canonical isomorphism 
\[
(\P^1,1)\cong S^1\wedge\G_m.
\]

This  together with the standard co-group structure on $S^1$, 
\[
\sigma:S^1\to S^1\vee S^1
\]
makes $(\P^1,1)$ a co-group object in $\sH_\bullet(k)$; let
\[
\sigma_{\P^1}:=\sigma\wedge\id_{\G_m}:(\P^1,1)\to (\P^1,1)\vee (\P^1,1)
\]
be the co-multiplication. In this section, we discuss a more algebraic description of this structure.

The function $f:=s/(s-1)^2$ on $\P^1\setminus\{1\}$ and the deformation to the normal bundle used in the previous section gives us the collapse map
\[
\co_{\{0,\infty\},\bar{f}}:(\P^1,1)\to (\P^1,1)\vee(\P^1,1).
\]

\begin{lem}\label{lem:ComultIdent} $\co_{\{0,\infty\},\bar{f}}=\sigma_{\P^1}$ in $\sH_\bullet(k)$.
\end{lem}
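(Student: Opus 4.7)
The plan is to compare both maps through a common Mayer--Vietoris decomposition of the deformation space $W := W_{\{0,\infty\}}$ compatible with the standard chart cover of $\P^1$. Pulling back the cover $\P^1 = U_0 \cup U_\infty$, with $U_0 = \P^1\setminus\{\infty\}$ and $U_\infty = \P^1\setminus\{0\}$, along $\mu\colon W \to \P^1\times\A^1$ gives a Zariski cover $W = W_0 \cup W_\infty$, where $W_i$ is the blow-up of $U_i\times\A^1 \cong \A^1\times\A^1$ at the single point $(i,0)$ and the overlap $W_0 \cap W_\infty = \G_m\times\A^1$ is unaffected by $\mu$. Crucially, this cover is compatible with the subscheme $C_0\cup s_1$, and the section $s_1 = \{1\}\times\A^1$ lies entirely in the overlap, while $i_1\colon\P^1\to W$ (inclusion of the fiber over $t=1$) restricts to $U_0 \to W_0$, $U_\infty \to W_\infty$, and $\G_m \to W_0\cap W_\infty$.

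Next I would apply Lemma~\ref{lem:cotrid} to each half separately. Near $s=0$ the expansion $f = s/(s-1)^2 = s\cdot(1+2s+O(s^2))$ shows $\bar f|_0 = \bar s$ in $m_0/m_0^2$, so the pair $(W_0,(C_0\cup s_1)\cap W_0)$ with the inclusions of the exceptional divisor $E_0$ and of $s_1$ models $\co_{0,\bar s}$, which is the identity on $(\P^1,1)$. The symmetric computation near $s=\infty$ in the coordinate $u=1/s$ gives $\bar f|_\infty = \overline{1/s}$ and identifies $(W_\infty,(C_0\cup s_1)\cap W_\infty)$ with $(\P^1,1)$ via $\co_{\infty,\overline{1/s}}=\id$. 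The overlap pair $(\G_m\times\A^1,\,(\G_m\times 0)\cup(\{1\}\times\A^1))$ is weakly equivalent in $\sH_\bullet(k)$ to $\G_m\wedge S^1 \cong (\P^1,1)$, obtained by contracting along the $\A^1$-factor and recognising the resulting Mayer--Vietoris suspension of $(\G_m,1)$.

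Reassembling the three pieces, the Mayer--Vietoris square for $(W,C_0\cup s_1)$ presents this pair in $\sH_\bullet(k)$ as the homotopy pushout of two copies of $(\P^1,1)$ along a third copy $(\P^1,1)$ that collapses to the basepoint in each factor, i.e.\ as the wedge $(\P^1,1)\vee(\P^1,1)$. Under this presentation the map $i_0^{-1}$ from lemma~\ref{lem:Blowup2} is the canonical identification with the wedge coming from the exceptional divisors $E_0,E_\infty$, while chasing $i_1$ through the square realises it as the smash product of the standard pinch $\sigma_{S^1}\colon S^1\to S^1\vee S^1$ with the identity on $\G_m$; this is precisely $\sigma_{\P^1}$. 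Composing $i_1$ with $i_0^{-1}$ then yields $\co_{\{0,\infty\},\bar f} = \sigma_{\P^1}$.

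The main obstacle is verifying the orientation in the final step: one has to check that the two identifications furnished by Lemma~\ref{lem:cotrid} at $0$ and at $\infty$ glue along the overlap in the order that produces the standard pinch map rather than its inverse. Concretely, this reduces to matching the chart identification $U_\infty \cong \A^1$ via $s\mapsto 1/s$, the generator $\overline{1/s}$ of $m_\infty/m_\infty^2$ determined by $f$, and the identification $(\P^1,1) \cong S^1\wedge\G_m$ induced by the cover $\P^1 = U_0\cup U_\infty$, so that the Mayer--Vietoris gluing data realises $\sigma_{S^1}\wedge\id_{\G_m}$ with the correct sign.
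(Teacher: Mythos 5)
Your overall strategy---cover $\P^1$ by the two standard charts, pull this back to a cover of the deformation space $W$, apply lemma~\ref{lem:cotrid} to each chart using the Taylor expansion of $f=s/(s-1)^2$ at $0$ and at $\infty$, and then reassemble---is essentially the same one the paper uses. The Taylor expansions $\bar f|_0=\bar s$ and $\bar f|_\infty=\overline{1/s}$ are correct, and this is precisely the feature of $f$ that the paper exploits. However, there is a concrete error in the assembly step that makes the argument as written break down.

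The overlap pair $(\G_m\times\A^1,\ (\G_m\times 0)\cup(\{1\}\times\A^1))$ is \emph{not} weakly equivalent to $S^1\wedge\G_m\cong(\P^1,1)$; it is contractible. Indeed $\{1\}\times\A^1$ is contractible and attached to $\G_m\times 0$ at the single point $(1,0)$, so $(\G_m\times 0)\cup(\{1\}\times\A^1)\simeq\G_m$, while the ambient $\G_m\times\A^1\simeq\G_m$ via projection; the inclusion of the subspace into the ambient space is therefore an $\A^1$-weak equivalence and the homotopy cofiber is a point. This error propagates into the reassembly: if the overlap were $(\P^1,1)$ mapping to the basepoint in each chart piece, the homotopy pushout would be $(\P^1,1)\vee\Sigma(\P^1,1)\vee(\P^1,1)$, not the wedge you claim. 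The fact that the pushout \emph{is} the wedge is exactly the contractibility of the overlap, so your conclusion is right for the wrong reason. (The paper sidesteps this entirely by working with the equivalent pair $(W,W\setminus s_{\{0,\infty\}})$ instead of $(W,C_0\cup s_1)$; since the sections $s_0$ and $s_\infty$ are disjoint, excision gives a direct wedge decomposition $(W',W'\setminus s')\vee(W',W'\setminus s')\to(W,W\setminus s_{\{0,\infty\}})$ with no Mayer--Vietoris overlap term to compute.)

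Beyond the overlap computation, the statement ``chasing $i_1$ through the square realises it as $\sigma_{S^1}\wedge\id_{\G_m}$'' is the actual content of the lemma, and it needs to be argued, not asserted. The paper spends the first half of its proof unwinding $\sigma_{\P^1}$ through the chain of pushout diagrams~\eqref{align:pushout} and the comparison diagram~\eqref{eqn:CoMultDiag}, in order to identify precisely which explicit map $(\P^1,\G_m)\to(\P^1,1)\vee(\P^1,1)$ in $\sH_\bullet(k)$ corresponds to $\sigma\wedge\id_{\G_m}$ under the standard identification $(\P^1,1)\cong S^1\wedge\G_m$. That identification is then matched against the excision/deformation diagram~\eqref{eqn:eq2}, and it is there that lemma~\ref{lem:cotrid} does its work chart by chart. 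Your ``main obstacle'' paragraph correctly flags the orientation bookkeeping at $\infty$, but both the overlap weak equivalence and the identification of $i_1$ with the pinch are prior gaps that need to be filled before the orientation question even arises.
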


\begin{proof} We first unwind the definition of $\sigma_{\P^1}$ in some detail. The isomorphism $\alpha:S^1\wedge\G_m\to (\P^1,1)$ in $\sH_\bullet(k)$ arises via a sequence of comparison maps between push-out diagrams:
\begin{align}\label{align:pushout}
\xymatrix{
\G_m\ar[r]^{j_\infty}\ar[d]_{j_0}&\A^1 \\
\A^1 &}\quad \lower20pt\hbox{$\leftarrow$}\quad
&\xymatrix{
I\times \G_m&1\times\G_m\ar[l]_{i_1}\ar[r]^{j_\infty}&1\times \A^1\\
0\times\G_m\ar[u]^{i_0}\ar[d]_{j_0}\\
0\times \A^1}\notag\\
&\hskip70pt\downarrow\\
&\xymatrix{
I\times \G_m&1\times\G_m\ar[l]_{i_1}\ar[r] &{*};\\
0\times\G_m\ar[u]^{i_0}\ar[d]\\
{*}}\notag
\end{align}
the first map is induced by the evident projections and the second by contracting $\A^1$ to $*$. Thus, the open immersion $\G_m\to\P^1$ goes over to the map $I\times\G_m\to S^1\wedge\G_m$ given by the bottom push-out diagram; as the inclusion $\{1/2\}\times\G_m\to I\times\G_m$ admits a deformation retract, we have the isomorphism
\[
\rho:(\P^1,\G_m)\to S^1\wedge\G_m\vee S^1\wedge\G_m
\]
in $\sH_\bullet(k)$, giving the commutative diagram
\[
\xymatrix{
(\P^1,1)\ar[r]^-\sim\ar[d]&S^1\wedge\G_m\ar[d]^{\sigma\wedge\id}\\
(\P^1,\G_m)\ar[r]_-\sim& S^1\wedge\G_m\vee S^1\wedge\G_m. 
}
\]
If we consider the middle push-out diagram, we find the isomorphism of $(\P^1,\G_m)$ with $(0\times\A^1,0\times\G_m)\vee(1\times\A^1,1\times\G_m)$ in $\sH_\bullet$, with the first inclusion the standard one $\A^1\setminus\{0\}\to \A^1$, and the second the inclusion $(\P^1\setminus\{0,\infty\},1)\to (\P^1\setminus\{0\},1)$. The map from the middle diagram to the last diagram furnishes the commutative diagram of isomorphisms
\[
\xymatrix{
(0\times\A^1,0\times\G_m)\vee(1\times\A^1,1\times\G_m)\ar[r]^-\beta\ar[dr]_{\vartheta}&
S^1\wedge\G_m\vee S^1\wedge\G_m\ar[d]^{\alpha\vee\alpha}\\
&(\P^1,1)\vee(\P^1,1)
}
\]
in $\sH_\bullet(k)$. Putting this all together gives us the commutative diagram in $\sH_\bullet(k)$:
\begin{equation}\label{eqn:CoMultDiag}
\xymatrix{
(\P^1,1)\ar[r]_{\rho}^\sim\ar[d]&S^1\wedge\G_m\ar[d]\\
(\P^1,\G_m)\ar[r]^\sim\ar@/_15pt/[dddr]_\delta\ar[dr]^\epsilon& (S^1\wedge\G_m,I\wedge\G_m)\ar[d]^\sim\\
&(0\times\A^1,0\times\G_m)\vee(1\times\A^1,1\times\G_m)\ar[d]_\beta^\sim\\
&S^1\wedge\G_m\vee S^1\wedge\G_m\ar[d]_{\alpha\vee\alpha}^\sim\\
& (\P^1,1)\vee(\P^1,1),
}
\end{equation}
We thus need to show that the resulting map $(\P^1,1)\to (\P^1,1)\vee(\P^1,1)$ is given by $\co_{\{0,1\},\bar{f}}$.

Let $j_0:\A^1\to\P^1$ be the standard affine neighborhood of 0, and $j_\infty :\A^1\to\P^1$ the standard affine neighborhood of $\infty$. The maps $j_0, j_\infty$ induce the isomorphisms in $\sH_\bullet(k)$
\begin{align*}
&j_0:(\A^1,\G_m)\to (\P^1,j_\infty(\A^1))\\
&j_\infty:(\A^1,\G_m)\to (\P^1,j_0(\A^1))
\end{align*}
giving together the isomorphism $\tau:(\P^1,1)\vee(\P^1,1)\to (\A^1,\G_m)\vee (\A^1,\G_m)$:
\begin{multline*}
(\P^1,1)\vee(\P^1,1) \to (\P^1,j_\infty(\A^1))\vee (\P^1,j_0(\A^1))
\xrightarrow{j_0^{-1}\vee j_\infty^{-1}}(\A^1,\G_m)\vee (\A^1,\G_m).
\end{multline*}
By comparing the maps in this composition with the push-out diagrams in \eqref{align:pushout}, we see that $\tau$ is the inverse to $\vartheta$

Let $W\to\P^1\times \A^1$ be the blow-up at $(\{0,\infty\},0)$  with exceptional divisor $E$. We have the composition of isomorphisms in $\sH_\bullet(k)$
\begin{equation}\label{eqn:eq1}
(\P^1,\G_m)\xrightarrow{i_1}
(W,W\setminus s_{\{0,\infty\}})\xleftarrow{i_0}
(E,C_0\cap E)\xleftarrow{\phi}(\P^1,1)\vee (\P^1,1).
\end{equation}
where $\phi$ is given by the isomorphism $\P^1\amalg\P^1\to E$ defined via the function $f$.

The open cover $(j_0,j_\infty):\A^1\amalg\A^1\to\P^1$ of $\P^1$ gives rise to an open cover of $W$: Let $\mu':W'\to \A^1\times\A^1$ be the blow-up at $(0,0)$, then we have the lifting of $(j_0,j_\infty)$ to the open cover
\[
(\tilde{j}_0,\tilde{j}_\infty):W'\amalg W'\to W.
\]
Letting $s'\subset W'$ be the proper transform of $0\times \A^1$ to $W'$, we have the excision isomorphism in $\sH_\bullet(k)$
\[
(\tilde{j}_0,\tilde{j}_\infty):(W',W'\setminus s')\vee (W',W'\setminus s')\to (W,W\setminus s_{\{0,\infty\}}).
\]

This extends to a commutative diagram of isomorphisms in $\sH_\bullet(k)$
\begin{equation}\label{eqn:eq2}
\xymatrixcolsep{40pt}
\xymatrix{
(\A^1,\G_m)\vee(\A^1,\G_m)\ar[d]_{i_1\vee i_1}\ar[r]^-{(j_0, j_\infty)}&(\P^1,\G_m)\ar[d]^{i_1}\\
(W',W'\setminus s')\vee (W',W'\setminus s')\ar[r]^-{(j_0, j_\infty)}&(W,W\setminus s_{\{0,\infty\}})\\
(E', E'\cap C'_0)\vee(E', E'\cap C'_0)\ar[u]^{i_0\vee i_0}\ar[r]^-{(j_0,j_\infty)}&(E,E\cap C_0)\ar[u]_{i_0}\\
(\P^1,1)\vee(\P^1,1)\ar[u]^{\phi'\vee\phi'}\ar@{=}[r]&(\P^1,1)\vee(\P^1,1).\ar[u]^\phi
}
\end{equation}
Here the map $\phi'$ is defined using the standard coordinate on $\A^1$ as generator for $m_0/m_0^2$; this is where we use the special property of the function $f$. Examining the push-out diagram \eqref{align:pushout}, we see that the map
\[
(j_0,j_\infty):(\A^1,\G_m)\vee(\A^1,\G_m)\to (\P^1,\G_m)
\]
is inverse to the map  $\epsilon$ in diagram \eqref{eqn:CoMultDiag}.

Let $W_0\to \P^1\times \A^1$ be the blow-up along $(0,0)$, $E^0$ the exceptional divisor, $C^0_0$ the proper transform of $\P^1\times0$. The inclusion $j_0$ gives us the commutative diagram
\[
\xymatrixcolsep{40pt}
\xymatrix{
&(\P^1,1)\vee(\P^1,1)\ar[d]\\
(\A^1,\G_m)\vee(\A^1,\G_m)\ar[ru]^-\vartheta\ar[d]_{i_1\vee i_1}\ar[r]^{(j_0\vee j_0)}&(\P^1,\A^1)\vee(\P^1,\A^1)\ar[d]_{i_1\vee i_1}\\
(W',W'\setminus s')\vee (W',W'\setminus s')\ar[r]^{(j_0\vee j_\infty)}&(W_0,W_0\setminus s_0)\vee (W_0,W_0\setminus s_0)\\
(E', E'\cap C'_0)\vee(E', E'\cap C'_0)\ar[u]^{i_0\vee i_0}\ar[r]^-{(j_0,j_\infty)}&(E^0,E^0\cap C^0_0)\vee (E^0,E^0\cap C^0_0)\ar[u]_{i_0\vee i_0}\\
(\P^1,1)\vee(\P^1,1)\ar[u]^{\phi'\vee\phi'}\ar@{=}[r]&(\P^1,1)\vee(\P^1,1).\ar[u]^{\phi'\vee\phi'}
}
\]
By  lemma~\ref{lem:cotrid} the composition along the right-hand side of this diagram is the identity on $(\P^1,1)\vee(\P^1,1)$, and thus the composition along the left-hand side is $\vartheta:(\A^1,\G_m)\vee(\A^1,\G_m)\to (\P^1,1)\vee(\P^1,1)$. Referring to diagram \eqref{eqn:CoMultDiag}, as $\epsilon=(j_0, j_\infty)^{-1}$, the composition along the right-hand side of \eqref{eqn:eq2} is the map $\delta$. As  the right-hand side of \eqref{eqn:eq2} is the deformation diagram used to define $\co_{\{0,\infty\},\bar{f}}$, the lemma is proved.
\end{proof}

\section{Slice localizations and co-transfer}

 In general, the co-transfer maps do not have the properties necessary to give a loop-spectrum $\Omega_{\P^1}E$ an action by correspondences. However, if we pass to a certain localization of $\SH_{S^1}(k)$ defined by the slice filtration, the co-transfer maps do respect correspondences, which will lead to the action of correspondences on $s_0\Omega_{\P^1}E$.

We have the localizing subcategory $\Sigma_{\P^1}^n\SH_{S^1}(k)$, generated (as a localizing subcategory) by objects of the form $\Sigma^n_{\P^1}E$, for $E\in \SH_{S^1}(k)$. We let $\SH_{S^1}(k)/f_n$ denote the localization of $\SH_{S^1}(k)$ with respect to  $\Sigma_{\P^1}^n\SH_{S^1}(k)$:
\[
\SH_{S^1}(k)/f_n=\SH_{S^1}(k)/\Sigma^n_{\P^1}\SH_{S^1}(k).
\]
 
\begin{rem} Pelaez has shown that there is a model structure on $\Spt_{S^1}(k)$ with homotopy category equivalent to 
$\SH_{S^1}(k)/f_{n}$; in particular, this localization of $\SH_{S^1}(k)$ does exist.
\end{rem}

\begin{lem}\label{lem:Birat} Let $V\to U$ be a dense open immersion in $\Sm/k$, $n\ge1$ an integer. Then the induced map
\[
\Sigma^n_{\P^1}V_+\to \Sigma^n_{\P^1} U_+
\]
is an isomorphism in $\SH_{S^1}(k)/f_{n+1}$.
\end{lem}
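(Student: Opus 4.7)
The plan is to show that the cofiber of $\Sigma^n_{\P^1}V_+\to\Sigma^n_{\P^1}U_+$, namely $\Sigma^n_{\P^1}(U,V)$, lies in the localizing subcategory $\Sigma^{n+1}_{\P^1}\SH_{S^1}(k)$. Since $\Sigma^n_{\P^1}$ commutes with homotopy colimits and carries every generator $\Sigma^1_{\P^1}F$ to $\Sigma^{n+1}_{\P^1}F$, it suffices to establish the case $n=0$: the object $(U,V)$ lies in $\Sigma^1_{\P^1}\SH_{S^1}(k)$.

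I would prove this by noetherian induction on $\dim(U\setminus V)$. Set $Z:=(U\setminus V)_{\red}$; since $k$ is perfect (in fact of characteristic zero), $Z$ admits a dense smooth open subscheme $Z^\circ\subset Z$, and we put $Z':=Z\setminus Z^\circ$ and $U':=U\setminus Z^\circ$. Then $U'$ is open (hence smooth) in $U$, contains $V$ as a dense open, and $U'\setminus V=Z'$ has strictly smaller dimension than $Z$. The octahedral axiom, applied to the factorization $V\subset U'\subset U$, yields a cofiber sequence $(U',V)\to(U,V)\to(U,U')$, so that $(U,V)$ is an extension of $(U,U')$ by $(U',V)$. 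By the inductive hypothesis $(U',V)\in\Sigma^1_{\P^1}\SH_{S^1}(k)$, so it remains to show that $(U,U')\in\Sigma^1_{\P^1}\SH_{S^1}(k)$.

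Since $Z^\circ=U\setminus U'$ is a smooth closed subscheme of $U$ of some codimension $c\ge 1$, the Morel--Voevodsky homotopy purity theorem gives an isomorphism $(U,U')\cong\Th(N)$ in $\sH_\bullet(k)$, where $N:=N_{Z^\circ/U}$ is the normal bundle. The main remaining step is therefore to prove that for any rank $c\ge 1$ vector bundle $N\to W$ on a smooth $W$, one has $\Th(N)\in\Sigma^c_{\P^1}\SH_{S^1}(k)\subset\Sigma^1_{\P^1}\SH_{S^1}(k)$. For a trivial bundle this is immediate from the identification $\Th(\sO_W^{\,c})\cong\Sigma^c_{\P^1}W_+$. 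In general $N$ is Zariski-locally trivial, so one chooses a finite trivializing open cover $W=\bigcup W_i$ and resolves along the associated Mayer--Vietoris squares, realizing $\Th(N)$ as a finite homotopy colimit of Thom spaces over opens on which $N$ is trivial. As $\Sigma^c_{\P^1}\SH_{S^1}(k)$ is a localizing (hence colimit- and extension-closed) subcategory, this yields the required membership.

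The main obstacle I anticipate is executing the Zariski-descent step cleanly: one must check that restriction-and-Thom commutes with the Mayer--Vietoris homotopy push-outs, and then induct on the number of opens in a trivializing cover. A slicker alternative is to use Jouanolou's device to replace $W$ by an affine $\A^m$-torsor $W'\to W$; on $W'$ the pullback of $N$ is a direct summand of a trivial bundle, and Thom-class multiplicativity $\Th(N\oplus M)\cong\Th(N)\wedge_W\Th(M)$ then forces $\Th(N|_{W'})$ into the localizing subcategory. Either route completes the argument.
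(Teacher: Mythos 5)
Your proof is correct and follows essentially the same strategy as the paper's: reduce to showing the cofiber $\Sigma^n_{\P^1}(U,V)$ lies in $\Sigma^{n+1}_{\P^1}\SH_{S^1}(k)$, filter the complement, and invoke Morel--Voevodsky purity. The one substantive difference is in how the filtration is chosen. The paper builds the filtration $V=U_{N+1}\subset\cdots\subset U_0=U$ so that each stratum $C_i=U_i\setminus U_{i+1}$ is not only smooth but also has \emph{trivial} normal bundle in $U_i$ (one simply shrinks the smooth open further, since vector bundles are Zariski-locally trivial); purity then hands you the cofiber as $\Sigma^{r_i}_{\P^1}C_{i+}$ directly, with no further work. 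You instead stop at a smooth stratum with a possibly non-trivial normal bundle, so you need the extra step that $\Th(N)\in\Sigma^c_{\P^1}\SH_{S^1}(k)$ for a rank-$c$ bundle $N$. Your Mayer--Vietoris/Zariski-descent argument for that step is sound (the total space and its complement of the zero section restrict compatibly to a trivializing cover, and $\Sigma^c_{\P^1}\SH_{S^1}(k)$ is closed under the resulting homotopy colimits). The paper's variant is slightly more economical because the trivial-normal-bundle refinement makes that step disappear.

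One caution on your proposed Jouanolou alternative: from $\Th(N|_{W'})\wedge_{W'_+}\Th(M)\cong\Sigma^{m}_{\P^1}W'_+$ it does not follow formally that $\Th(N|_{W'})$ itself lies in the localizing subcategory --- membership of a smash product in a localizing subcategory does not pass to the factors. As written that route has a gap; stick with the Mayer--Vietoris argument (or, as the paper does, refine the stratification so the normal bundles are trivial and the issue never arises).
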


\begin{proof} We can filter $U$ by open subschemes
\[
V=U_{N+1}\subset U_N\subset\ldots\subset U_0=U
\]
such that $U_{i+1}=U_i\setminus C_i$, with $C_i\subset U_i$ smooth and having trivial normal bundle in $U_i$, of rank say $r_i$,, for $i=0,\ldots, N$. By the  Morel-Voevodsky purity theorem \cite[theorem 2.23]{MorelVoev}, the cofiber of $U_{i+1}\to U_i$ is isomorphic in $\sH_\bullet(k)$ to $\Sigma_{\P^1}^{r_i}C_{i+}$, and thus the cofiber of $\Sigma^n_{\P^1}U_{i+1+}\to\Sigma^n_{\P^1}U_{i+}$ is isomorphic to $\Sigma_{\P^1}^{r_i+n}C_{i+}$.  Since $V$ is dense in $U$, we have $r_i\ge1$ for all $i$, proving the lemma.
\end{proof}

We recall the blow-up $W:=W_{\bar{x}}\to\P^1_R\times\A^1$.

\begin{lem}\label{lem:Semiloc} Let  $m_R\subset R$ be the Jacobson radical of $R$ and let $S$ be the semi-localization of $R[t]$ with respect to the ideal $t(t-1)+m$. Let $\Spec S\to \Spec R[t]=\A^1_R$ be evident open immersion. Let $W_S$, $s_{1,S}$ be the respective fiber products of $W$, $s_1$ with $\Spec S$ over $\A^1_R$. Then the inclusion $W_S\to W$
induces an isomorphism
\[
(W_S,C_0\cup s_{1,S})\to (W,C_0\cup s_1)
\]
in $\SH_{S^1}(k)/f_2$.  
\end{lem}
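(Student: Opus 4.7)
The plan is to write $\Spec S$ as a cofiltered limit $\Spec S = \lim_\alpha U_\alpha$ of Zariski open neighborhoods $U_\alpha \subset \A^1_R$ of the finitely many maximal ideals of $R[t]$ of the form $(p_i, 0)$ and $(p_i, 1)$ (where the $p_i$ are the maximal ideals of $R$), and to show that for each such $U$ the map $(W_U, C_0 \cup s_{1,U}) \to (W, C_0 \cup s_1)$ is already an isomorphism in $\SH_{S^1}(k)/f_2$; the statement for $S$ then follows by passing to the limit in the model structure for this localization.

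The crucial geometric observation is that the complement $Z := \A^1_R \setminus U$ is disjoint from both $\{t=0\}$ and $\{t=1\}$ inside $\A^1_R$: since $R$ is semi-local with maximal ideals $p_1,\ldots,p_r$, any irreducible closed subset of $\{t=0\} \cong \Spec R$ must contain some $(p_i,0)$, and by hypothesis $U$ contains all such points. Using generic smoothness in characteristic zero, I filter $\A^1_R$ by opens $U = U_0 \subset U_1 \subset \cdots \subset U_m = \A^1_R$ whose successive differences $C_i := U_{i+1} \setminus U_i$ are smooth closed subschemes of codimension $r_i \ge 1$ in $\A^1_R$, each disjoint from $\{t=0,1\}$.

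Over each $C_i$ the blow-up $\pi\colon W \to \A^1_R$ is trivial: $\pi^{-1}(C_i) \cong \P^1_R \times C_i$ sits inside $W_{U_{i+1}}$ as a smooth closed subscheme with normal bundle $\pi^* N_i$, where $N_i$ is the normal bundle of $C_i$ in $\A^1_R$; and $s_1 \cap \pi^{-1}(C_i) = \{1\} \times C_i$, with normal bundle $N_i$. By Morel--Voevodsky purity, the cofibers of $s_{1,U_i} \to s_{1,U_{i+1}}$ and of $W_{U_i} \to W_{U_{i+1}}$ are the Thom spaces $\Th(N_i)$ and $\Th(\pi^* N_i)$ respectively, and since $\pi$ restricted over $C_i$ is the trivial $\P^1_R$-bundle one computes
\[
\cofib\bigl(\Th(N_i) \to \Th(\pi^* N_i)\bigr) \;\cong\; \Th(N_i) \wedge (\P^1_R,1).
\]
As $\Th(N_i) \in \Sigma^{r_i}_{\P^1}\SH_{S^1}(k)$ and $(\P^1_R,1) \in \Sigma_{\P^1}\SH_{S^1}(k)$, this cofiber lies in $\Sigma^{r_i+1}_{\P^1}\SH_{S^1}(k) \subset \Sigma^2_{\P^1}\SH_{S^1}(k)$, since $r_i \ge 1$. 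Invoking the octahedral triangle
\[
\cofib(s_{1,U} \to s_1) \to \cofib(W_U \to W) \to \cofib\bigl((W_U, C_0 \cup s_{1,U}) \to (W, C_0 \cup s_1)\bigr)
\]
(using $C_0 \subset W_U$, so that the cofiber on the $A$-side reduces to $\cofib(s_{1,U} \to s_1)$) and inducting along the filtration $U_0 \subset \cdots \subset U_m$, the total cofiber of the pair map lies in $\Sigma^2_{\P^1}\SH_{S^1}(k)$, which is precisely the vanishing required for an isomorphism in $\SH_{S^1}(k)/f_2$.

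The main technical obstacle is extracting the extra factor of $\Sigma_{\P^1}$ beyond what Lemma~\ref{lem:Birat} already gives for dense open immersions: the jump from $\Sigma^1_{\P^1}$ to $\Sigma^2_{\P^1}$ comes entirely from the fact that $s_1$ meets each $\P^1_R$-fiber of $\pi^{-1}(C_i) \to C_i$ in just the single section $\{1\}$, so the relative cofiber picks up an additional $(\P^1_R,1)$-factor. A secondary point is that $\Spec S$ is only essentially smooth, so some care is needed in the limit step, but this follows from the compactness of the generators of $\SH_{S^1}(k)$.
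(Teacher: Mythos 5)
Your proof is essentially the same argument as the paper's, just organized slightly differently. The paper first uses excision to reduce to the region $t\neq0$ where $W\cong\P^1\times(\A^1_R\setminus\{0\})$ is a trivial $\P^1$-bundle, then filters $\Spec S[t^{-1}]$ by opens of $\A^1_R\setminus\{0\}$ with smooth complements of trivial normal bundle; the cofiber at each step is the ``pair of Thom spaces'' $(\Th(\sO^r_{\P^1\times C_\alpha}),\Th(\sO^r_{1\times C_\alpha}))\cong\Sigma^r_{\P^1}(\P^1_{C_\alpha},1_{C_\alpha})\cong\Sigma^{r+1}_{\P^1}C_{\alpha+}$, which is exactly your identification $\cofib(\Th(N_i)\to\Th(\pi^*N_i))\cong\Th(N_i)\wedge(\P^1,1)$. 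You instead filter $\A^1_R$ directly, observe that the removed strata avoid $t=0,1$ (the key geometric input in both versions), and extract the relative cofiber via the octahedral/nine-lemma triangle rather than an up-front reduction to the $\P^1\times(\A^1_R\setminus\{0\})$ model; you also handle possibly nontrivial normal bundles via $\Th(\pi^*N_i)\cong\P^1_+\wedge\Th(N_i)$ where the paper shrinks opens to trivialize. These are cosmetic rearrangements of the same core idea — the extra suspension comes from the $(\P^1,1)$-fiber factor in the pair. One small point worth tightening: the final limit step is handled in the paper by observing that $\Sigma^2_{\P^1}\SH_{S^1}(k)$ is a localizing subcategory (closed under the filtered homotopy colimit presenting $W_S$), which is a cleaner formulation than your appeal to compactness of generators, though both lead to the same conclusion.
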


\begin{proof} We note that 
\[
W\times_{\A^1_R}\A^1_R\setminus\{0\}\cong \P^1\times\A^1_R\setminus\{0\}
\]
and thus the cofiber of $W_S\to W$ is isomorphic to the cofiber of 
\[
(\P^1\times\Spec S[t^{-1}], 1)\to  (\P^1\times\A^1_R\setminus\{0\},1)
 \]
We can form $\Spec S[t^{-1}]$ from $\A^1_R\setminus\{0\}$ as a filtered projective limit of open subschemes $U_\alpha$ of $\A^1_R\setminus\{0\}$, 
by successively removing smooth closed subschemes $C_\alpha\subset U_\alpha$ from $U_\alpha$ to form $U_{\alpha+1}$; we may also assume that each $C_\alpha$ has trivial normal bundle in $U_\alpha$. The cofiber of
\[
(\P^1\times U_{\alpha+1},1)\to (\P^1\times U_\alpha,1)
\]
is thus isomorphic in $\sH_\bullet(k)$ to the pair of Thom spaces $(Th(\sO^r_{\P^1\times C_\alpha}), Th(\sO^r_{1\times C_\alpha}))$ for some $r\ge1$. As this is isomorphic to $\Sigma^r_{\P^1}(\P^1_{C_\alpha},1_{C_\alpha})\cong \Sigma^{r+1}_{\P^1}C_{\alpha+}$ this cofiber is in $\Sigma^2_{\P^1}\SH_{S^1}(k)$ for all $\alpha$. Since $\Sigma^2_{\P^1}\SH_{S^1}(k)$ is localizing, the cofiber of  $W_S\to W$ is in $\Sigma^2_{\P^1}\SH_{S^1}(k)$, as desired.
\end{proof}

\begin{lem}\label{lem:CanonThomIso} Let $W\subset U$ be a codimension $\ge r$ closed subscheme of $U\in\Sm/k$, let $w_1,\ldots, w_m$ be the generic points of $W$ of codimension $=r$ in $U$. Then in $\SH_{S^1}/f_{r+1}$ there is a canonical isomorphism
\[
(U,U\setminus W)\cong \oplus_{i=1}^m \Sigma_{\P^1}^r w_{i+}.
\]
Specifically, letting $m_i\subset\sO_{U,w_i}$ be the maximal ideal, this isomorphism is independent of any choice of isomorphism $m_i/m_i^2\cong k(w_i)^r$.
\end{lem}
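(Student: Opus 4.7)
The plan is a three-step reduction: first remove the components of $W$ of codimension $>r$; next reduce to $W$ smooth of pure codimension $r$ with trivialized normal bundle and apply Morel-Voevodsky purity; finally pass to the generic points via Lemma~\ref{lem:Birat}.

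In the first step, I will write $W = W_{(=r)} \cup W_{(>r)}$, where $W_{(>r)}$ is the union of the irreducible components of $W$ of codimension strictly greater than $r$, and set $V := U\setminus W_{(>r)}$. Since $V\setminus (W\cap V) = U\setminus W$, the inclusion $V\hookrightarrow U$ yields a cofiber sequence in $\sH_\bullet(k)$:
\[
(V, V\setminus (W\cap V)) \to (U, U\setminus W) \to (U, V) = (U, U\setminus W_{(>r)}).
\]
Stratifying $W_{(>r)}$ by smooth locally closed subschemes of codimension $\ge r+1$ in $U$ (by generic smoothness in characteristic zero) and applying Morel-Voevodsky purity inductively, the third term is built from Thom spaces $\Th(E_\alpha)$ of vector bundles of rank $\ge r+1$. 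Zariski-local triviality of each $E_\alpha$ combined with Mayer-Vietoris puts each $\Th(E_\alpha)$, and hence $(U, V)$ itself, in $\Sigma^{r+1}_{\P^1}\SH_{S^1}(k)$; this vanishes in $\SH_{S^1}/f_{r+1}$, so the first map is an isomorphism there.

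Assuming now $W$ has pure codimension $r$, I will excise the singular locus $W^{\sing}$ (of codimension $\ge r+1$ in $U$) by the same argument, reducing to $W$ smooth of pure codimension $r$, a disjoint union $\bigsqcup_i W_i$ with each $W_i$ connected of generic point $w_i$. Morel-Voevodsky purity gives $(U, U\setminus W) \cong \bigvee_i \Th(N_{W_i/U})$. For each $i$, shrink $W_i$ to a Zariski open $V_i \ni w_i$ on which $N_{W_i/U}$ admits a trivialization $\tau_i$ extending the chosen $\phi_i\colon m_i/m_i^2 \cong k(w_i)^r$, yielding $\Th(N_{W_i/U}|_{V_i}) \cong \Sigma^r_{\P^1}V_{i+}$ in $\sH_\bullet(k)$. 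A cofiber argument analogous to the first step, applied to the complement $W_i\setminus V_i$ of codimension $\ge r+1$ in $U$, shows $\Th(N_{W_i/U}|_{V_i}) \to \Th(N_{W_i/U})$ is an isomorphism in $\SH_{S^1}/f_{r+1}$. Lemma~\ref{lem:Birat} in turn shows that the class of $\Sigma^r_{\P^1}V_{i+}$ in $\SH_{S^1}/f_{r+1}$ is independent of the choice of dense open $V_i \subset W_i$; denoting this common value $\Sigma^r_{\P^1}w_{i+}$, this assembles to an isomorphism $(U,U\setminus W) \cong \bigoplus_i \Sigma^r_{\P^1}w_{i+}$ for each choice of $\{\phi_i\}$.

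The main obstacle is the canonicity claim. Two choices $\phi_i$ and $\phi_i'$ extend (after further shrinking) to trivializations of $N_{W_i/U}|_{V_i}$ differing by an element $g_i\in\GL_r(V_i)$ whose value at $w_i$ is the prescribed $\bar{g}_i \in \GL_r(k(w_i))$; the discrepancy between the two resulting isomorphisms in the conclusion is the self-automorphism of $\Sigma^r_{\P^1}w_{i+}$ induced by $\bar g_i$ via the standard $\GL_r$-action on $\Th(\sO^r)$. To complete the proof I must show this automorphism is the identity in $\SH_{S^1}/f_{r+1}$. Writing $\bar g_i = u\cdot\mathrm{diag}(\det \bar g_i, 1, \ldots, 1)$ with $u\in \mathrm{SL}_r(k(w_i))$, the factor $u$ is a product of elementary transvections, which act by $\A^1$-homotopies on $\Th(\sO^r)$ and are already trivial in $\sH_\bullet(k)$; the remaining scalar $\G_m$-action on a single $\P^1$-factor differs from the identity by a class whose cofiber one expects to lie in $\Sigma^{r+1}_{\P^1}\SH_{S^1}(k)$, and so to vanish in $\SH_{S^1}/f_{r+1}$. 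Establishing this last vanishing, which is a manifestation of the principle that $\G_m$-twisting of $\P^1$ is a ``one-step deeper'' $\P^1$-effect, is the technical heart of the lemma.
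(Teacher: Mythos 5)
Your construction of the isomorphism follows essentially the paper's route: excise and stratify away the part of $W$ of codimension $>r$ and then the singular locus, apply Morel--Voevodsky purity, and localize to the generic points using lemma~\ref{lem:Birat}. (The paper shortcuts to the semi-local scheme $X_z$ directly rather than passing through ``$W$ smooth of pure codimension $r$,'' but these are the same idea and both produce the isomorphism for each choice of trivialization of $m_i/m_i^2$.)

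The gap is in the canonicity step, and you flag it yourself. Your plan is to decompose $\bar g_i\in\GL_r(k(w_i))$ as an $\mathrm{SL}_r$-factor (dispatched by transvections and $\A^1$-homotopies) times a scalar on one $\P^1$-factor, and then to argue that this scalar acts trivially in $\SH_{S^1}(k)/f_{r+1}$ because ``one expects'' the relevant cofiber to lie in $\Sigma^{r+1}_{\P^1}\SH_{S^1}(k)$. This expectation is true, but it is precisely the content of the lemma, and you give no route to it. The paper proves canonicity by a cleaner argument that never analyses the $\GL_r$-action on $\Th(\sO^r)$ and never separates out the determinant. Introduce the $k(w)$-scheme $\Iso$ of trivializations $m_w/m_w^2\cong k(w)^r$; there is a canonical map $\Sigma^r_{\P^1}\Iso_+\to(\Spec\sO,\Spec\sO\setminus w)$ in $\sH_\bullet(k)$ (coming from deformation to the normal cone) whose restriction along any $k(w)$-point $\alpha$ of $\Iso$ is the purity isomorphism determined by $\alpha$. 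Now $\Iso$ is a trivial $\GL_r$-torsor, hence an open subscheme of $\A^{r^2}_{k(w)}$ with complement of codimension $\ge 1$, so $(\A^{r^2},\Iso)\in\Sigma^1_{\P^1}\SH_{S^1}(k)$ and the map $\Sigma^r_{\P^1}\Iso_+\to\Sigma^r_{\P^1}\A^{r^2}_{w+}$ is an isomorphism in $\SH_{S^1}(k)/f_{r+1}$. Since any two $k(w)$-points of $\A^{r^2}$ are $\A^1$-homotopic, any two $\Sigma^r_{\P^1}\alpha$ become equal in $\SH_{S^1}(k)/f_{r+1}$, and the independence follows for the whole of $\GL_r$ at once. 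You should replace the $\mathrm{SL}_r$/scalar decomposition by this argument (or, at minimum, prove the asserted vanishing for the $\G_m$-action, which you can do by the same device applied to $\G_m\subset\A^1$; but at that point you have rediscovered the $\Iso$-scheme trick in the rank-one case).
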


\begin{proof} Let $w=\{w_1,\ldots. w_m\}$ and let $\sO_{U,w}$ denote the semi-local ring of $w$ in $U$. Let $V\subset U$ be the projective limit of open subschemes of $U$ of the form $U\setminus C$, where $C$ is a closed subset of $U$ contained in $W$. Since $V\cap W=w$, we see that $V\cap W$ has trivial normal bundle in $V$ and thus by the Morel-Voevodsky purity isomorphism \cite[{\it loc. cit}]{MorelVoev}
\[
(V,V\setminus V\cap W)\cong \Sigma_{\P^1}^r w_+\cong \oplus_{r=1}^m\Sigma_{\P^1}w_{i+}.
\]
On the other hand, we can write $V$ as a filtered projective limit of open subschemes $U_\alpha$ of $U$, with $U_{\alpha+1}=U_\alpha\setminus C_\alpha$, for some closed subset $C_\alpha$ of $W$ which is smooth, contains no $w_i$, and has trivial normal bundle in $U_\alpha$. In particular, $C_\alpha$ has codimenison $\ge r+1$ in $U_\alpha$, and thus
\[
(U_\alpha,U_{\alpha+1})\cong Th(\sO^n_{C_\alpha})\cong \Sigma^n_{\P^1}C_{\alpha+}
\]
is in $\Sigma_{\P^1}^{r+1}\SH_{S^1}(k)$. Therefore the map
\[
(V,V\setminus V\cap W)\to (U,U\setminus W)
\]
is an isomorphism in $\SH_{S^1}(k)/f_{r+1}$. 

We need only verify that the resulting isomorphism $(U,U\setminus W)\cong \oplus_{r=1}^m\Sigma_{\P^1}w_{i+}$ is independent of any choices. Letting $\sO$ denote the Henselization of $w$ in $V$, we have the canonical excision isomorphism
\[
(V,V\setminus V\cap W)\cong (\Spec\sO,\Spec\sO\setminus w)
\]
A choice of isomorphism
\[
m_w/m_w^2\cong k(w)^r
\]
then gives the isomorphism in $\sH_\bullet(k)$
\[
 \Sigma_{\P^1}^r w_+\cong (\Spec\sO,\Spec\sO\setminus w);
\]
this choice of isomorphism is thus the only choice involved in constructing our isomorphism $(U,U\setminus W)\cong \oplus_{i=1}^m \Sigma_{\P^1}^r w_{i+}$. Explicitly, the choice of isomorphism $m_w/m_w^2\cong k(w)^r$ is reflected in the isomorphism $(\Spec\sO,\Spec\sO\setminus w)\cong \Sigma_{\P^1}^r w_+$ through the identification of the exceptional divisor of the blow-up of $V\times\A^1$ along $w\times0$ with $\P^r_w$.

Let $\Iso$ denote the $k(w)$ scheme of isomorphisms $m_w/m_w^2\cong k(w)^r$. We thus have a canonical  morphism in $\sH_\bullet(k)$
\begin{equation}\label{eqn:Canon}
\Sigma^r_{\P^1}\Iso_+\to (\Spec\sO,\Spec\sO\setminus w)  
\end{equation}
such that, for each $k(w)$-point $\alpha$ of $\Iso$, the composition
\[
\Sigma^r_{\P^1}w_+\xrightarrow{\Sigma^r_{\P^1}\alpha}\Sigma^r_{\P^1}\Iso_+\to (\Spec\sO,\Spec\sO\setminus w)  
\]
is the isomorphism in $\sH_\bullet(k)$ described above.

We note that $\Iso$ is a trivial principal homogeneous space for $\GL_{r/k(w)}$, hence isomorphic to an open subscheme   of $\A^{r^2}_{k(w)}$, and thus $(\A^{r^2},\Iso)$ is in $\Sigma^1_{\P^1}\SH_{S^1}(k)$. We thus have the  isomorphism in $\SH_{S^1}(k)/f_{r+1}$
\[
\Sigma^r_{\P^1}\Iso_+\cong \Sigma^r_{\P^1}\A^{r^2}_{w+}\cong \Sigma^r_{\P^1}w_{+},
\]
from which it easily follows that the map $\Sigma^r_{\P^1}\alpha$ in $\SH_{S^1}(k)/f_{r+1}$ is independent of the choice of $\alpha$, completing the proof.
\end{proof}

\begin{rem}\label{rem:Canon} As a particular case, we have a canonical isomorphism
\[
\psi_{0,\infty}:(\P^1,\G_m)\to (\P^1,1)\vee(\P^1,1)
\]
in $\SH_{S^1}(k)/f_2$, independent of any choice of generator for $m_{0,\infty}/m_{0,\infty}^2$. This simplifies our description of the co-multiplication on $(\P^1,1)$ at least if we work in $\SH_{S^1}(k)/f_2$, as being given by the composition
\[
(\P^1,1)\to (\P^1,\G_m)\xrightarrow{\psi_{0,\infty}}(\P^1,1)\vee(\P^1,1).
\]

Similarly, for each $a\in\P^1(F)$, we have a canonical isomorphism $\psi_a:(\P^1_F,\P^1_F\setminus\{a\})\to (\P^1_F,1)$ in $\SH_{S^1}(k)/f_2$; by lemma~\ref{lem:cotrid}, the composition 
\[
(\P^1_F,1)\to (\P^1_F,\P^1_F\setminus\{a\})\xrightarrow{\psi_0} (\P_F^1,1)
\]
in $\SH_{S^1}(k)/f_2$ is the identity.
\end{rem}

\begin{lem}\label{lem:Mult} Let $\mu_n:(\P^1_F,\infty)\to (\P^1_F,\infty)$ be the $\mu_n(1:t)=(1,t^n)$. Assume the characteristic of $k$ is prime to $n!$. Then in $\SH_{S^1}(k)/f_2$, $\mu_n$ is multiplication by $n$.
\end{lem}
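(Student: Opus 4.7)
My plan is to factor $\mu_n$ in $\SH_{S^1}(k)/f_2$ through the wedge decomposition $(\P^1_F,\P^1_F\setminus\mu_n^{-1}(a))\cong\bigoplus_{j=0}^{n-1}(\P^1_F,1)$ supplied by lemma~\ref{lem:CanonThomIso}, for a suitable $a\in\G_m(F)$, and to identify this factorization as $\nabla\circ\Delta=n\cdot\id$.

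After enlarging $F$ to a finite separable extension $L$ containing a primitive $n$-th root of unity $\zeta$ and an $n$-th root $\alpha$ of some chosen $a\in F^\times$, the fibre $\mu_n^{-1}(a)=\{b_j:=\alpha\zeta^j\mid 0\le j<n\}$ consists of $n$ distinct $L$-rational points, and the hypothesis $\Char k\nmid n!$ ensures that $\mu_n$ is étale at each $b_j$. Lemma~\ref{lem:CanonThomIso} applied with $r=1$ supplies canonical isomorphisms
\[
\psi_a:(\P^1_L,\P^1_L\setminus\{a\})\iso(\P^1_L,1),\qquad \Psi_S:(\P^1_L,\P^1_L\setminus\mu_n^{-1}(a))\iso\bigoplus_{j=0}^{n-1}(\P^1_L,1)
\]
in $\SH_{S^1}(k)/f_2$, where the $j$-th coordinate projection of $\Psi_S$ is induced by $\psi_{b_j}$.

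I would then consider the commutative square
\[
\xymatrix{
(\P^1_L,\infty)\ar[r]^{\mu_n}\ar[d]_\pi&(\P^1_L,\infty)\ar[d]^{\pi'}\\
(\P^1_L,\P^1_L\setminus\mu_n^{-1}(a))\ar[r]^-{\bar\mu_n}&(\P^1_L,\P^1_L\setminus\{a\})
}
\]
and identify the three non-identity sides through the above isomorphisms: $\psi_a\circ\pi'$ is the canonical identification $\iota:(\P^1_L,\infty)\iso(\P^1_L,1)$ (remark~\ref{rem:Canon}); $\Psi_S\circ\pi$ equals $\Delta\circ\iota$, since applying remark~\ref{rem:Canon} at each $b_j$ shows each of its $n$ coordinate projections equals $\iota$; and $\psi_a\circ\bar\mu_n\circ\Psi_S^{-1}$ is the fold $\nabla$, because the étaleness of $\mu_n$ at each $b_j$ gives a Nisnevich-local isomorphism of pointed pairs which, by the canonical nature of the Thom identifications in lemma~\ref{lem:CanonThomIso}, is the identity on each summand. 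Combining yields $\iota\circ\mu_n=\nabla\circ\Delta\circ\iota$, whence $\mu_n=n\cdot\id$ in $\SH_{S^1}(k)/f_2$ over $L$.

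The main obstacle is descending this equality from $L$ back to $F$. Since $L/F$ is a finite separable (indeed Galois) extension and every morphism in sight is defined over $F$, the plan is to combine the flat base-change compatibility of lemma~\ref{lem:FlatPullback} with a transfer/trace argument along $\Spec L\to\Spec F$; alternatively one tests the equality in a detecting theory such as $\CH^1(\P^1_F)\cong\Z$, where $\mu_n$ visibly acts by multiplication by $n$. Either route completes the argument.
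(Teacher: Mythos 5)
Your strategy of factoring $\mu_n$ through the collapse to a finite fibre and identifying it as a fold-of-diagonal is a natural idea, but it runs into a genuine obstruction exactly where you flag it: the descent from $L$ back to $F$. The equality $\mu_{n,L}=n\cdot\id$ is an identity in the group $\Hom_{\SH_{S^1}(k)/f_2}\bigl((\P^1_L,\infty),(\P^1_L,\infty)\bigr)$, and there is no map from $\Hom\bigl((\P^1_F,\infty),(\P^1_F,\infty)\bigr)$ to that group under which "$\mu_n$ goes to $\mu_n$" is tautological; what you do have is the finite covering $p:(\P^1_L,\infty_L)\to(\P^1_F,\infty_F)$, and the identity over $L$ only gives $(\mu_{n,F}-n\cdot\id)\circ p=0$. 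A transfer/trace $\tau$ with $p\circ\tau=[L:F]\cdot\id$ (granting that such a $\tau$ is available in $\SH_{S^1}(k)/f_2$, which is itself a claim one would have to justify without circularity, since co-transfers are precisely what this section is constructing) would only yield $[L:F]\cdot(\mu_{n,F}-n\cdot\id)=0$, and the Hom group is not known to be torsion-free. Varying $a$ does not obviously help: as soon as $n\ge 3$ and $\mu_n\not\subset F$, every splitting field $L$ contains $F(\mu_n)$, so the degrees $[L:F]$ share a common factor. The fallback — testing the identity in $\CH^1(\P^1_F)$ — is not a proof either: checking one cohomology theory does not detect equality of morphisms in $\SH_{S^1}(k)/f_2$.

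The paper sidesteps the rationality problem entirely by a different device. Instead of $\mu_n$ it studies $\rho_n(1:t)=(1:t^n-t^{n-1}+1)$, whose fibre over $1$ is $\{0,1\}$, already $F$-rational, with multiplicities $n-1$ and $1$. Localizing (Henselizing) around $w=1$ splits $\rho_n$ into a local copy of $\rho_{n-1}$ plus the identity, so $\rho_n=n\cdot\id$ follows by induction together with the comultiplication identity from remark~\ref{rem:Canon}. Then the explicit $\A^1$-family $\rho_n(s)(t_0:t_1,s)=(t_0^n:t_1^n-st_0t_1^{n-1}+t_0^n)$ connects $\rho_n=\rho_n(1)$ to $\rho_n(0)$, which is $\mu_n$ followed by an affine translation (itself $\A^1$-homotopic to the identity), giving $\mu_n=\rho_n=n\cdot\id$ over $F$ with no base extension. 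If you want to salvage your plan, you would either have to replace $\mu_n$ by a map with $F$-rational fibre and then compare to $\mu_n$ by an $\A^1$-homotopy (which is essentially the paper's route), or supply a genuine descent argument for morphism groups in $\SH_{S^1}(k)/f_2$ along finite separable extensions, which is not in the paper and would require separate work.
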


\begin{proof} The proof goes by induction on $n$, starting with $n=1,2$. The case $n=1$ is trivial. For $n=2$, we have the commutative diagram
\[
\xymatrix{
(\P^1_F,\infty)\ar[r]\ar[d]_{\mu_2}&(\P^1_F,\P^1_F\setminus\{\pm1\})\ar[d]^{\mu_2}\\
(\P^1_F,\infty)\ar[r]&(\P^1_F,\P^1_F\setminus\{1\})
}
\]
The bottom horizontal arrow is an isomorphism in $\sH_\bullet(k)$; using remark~\ref{rem:Canon} we see that this diagram gives us the factorization of $\mu_2$ (in $\SH_{S^1}(k)/f_2$) as
\[
(\P^1_F,\infty)\xrightarrow{\sigma}(\P^1_F,\infty)\vee(\P^1_F,\infty)\xrightarrow{\id\vee\id}(\P^1_F,\infty).
\]
Here $\sigma$ is the co-multiplication (using $\infty$ instead of $1$ as base-point).  Since $[(\id\vee\id)\circ\sigma]^*$ is multiplication by 2, this takes care of the case $n=2$.

In general, we consider the map $\rho_n:(\P^1,\infty)\to (\P^1,\infty)$ sending $(1:t)$ to $(1:w):=(1:t^n-t^{n-1}+1)$. As above, we localize around $w=1$. Note that $\rho_n^{-1}(1)=\{0,1\}$; we replace the target $\P^1$ with the Henselization $\sO$ at $w=1$, and see that $\P^1\times_{\rho_n}\sO$ breaks up into two components. On the component containing $0$, the map $\rho_n$ is isomorphic to a Hensel local version of $\rho_{n-1}$, and on the component containing $1$, the map $\rho_n$ is isomorphic to the identity. More explicitly, we have the following commutative diagram (in $\SH_{S^1}(k)/f_2$)
\[
\xymatrix{
(\P^1_F,\infty)\ar[r]\ar[d]_{\mu_n}&(\P^1_F,\P^1_F\setminus\{0,1\})\ar[d]_{\mu_n}\ar[r]^-\sim&(\P_F^1,\infty)\vee(\P_F^1,\infty)\ar[d]^{\mu_{n-1}\vee\id}\\
(\P^1_F,\infty)\ar[r]&(\P^1_F,\P^1_F\setminus\{1\})\ar[r]_-\sim&(\P^1_F,\infty)
}
\]
As the upper row is the co-multiplication (in $\SH_{S^1}(k)/f_2$), our induction hypothesis shows that $\rho_n$ (in 
$\SH_{S^1}(k)/f_2$)  is multiplication by $n$. On the other hand, we may form the family of morphisms
\[
\rho_n(s):(\P^1\times\A^1,\infty\times\A^1)\to (\P^1\times\A^1,\infty\times\A^1)
\]
sending $(t_0:t_1,s)$ to $(t_0^n:t_1^n-st_0t_1^{n-1}+t_0^n)$. By homotopy invariance, we have $\rho_n(0)=\rho_n(1)$, and the induction goes through.
\end{proof}

While we are on the subject, we might as well show

\begin{prop} The co-group $((\P^1,1),\sigma_{\P^1})$ in $\SH_{S^1}(k)/f_2$ is co-commutative.
\end{prop}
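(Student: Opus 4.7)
The plan is to reduce cocommutativity of $\sigma_{\P^1}$ to the classical cocommutativity of the pinch map $\sigma_{S^1} : S^1 \to S^1 \vee S^1$ on the simplicial circle, using the definition of $\sigma_{\P^1}$ via the identification $(\P^1,1) \cong S^1 \wedge \G_m$ in $\sH_\bullet(k)$.

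First I would recall that $\sigma_{\P^1}$ was defined as $\sigma_{S^1} \wedge \id_{\G_m}$ under the canonical isomorphism $(\P^1,1) \cong S^1 \wedge \G_m$ in $\sH_\bullet(k)$; the geometric description $\co_{\{0,\infty\},\bar{f}}$ from Lemma~\ref{lem:ComultIdent} is an equivalent, more computationally convenient presentation, but for cocommutativity I would work exclusively with the simplicial description. Next I would invoke the classical fact that $\sigma_{S^1}$ is cocommutative up to based simplicial homotopy: rotating $S^1$ by angle $\pi$ gives an explicit based simplicial homotopy $\sigma_{S^1} \simeq \tau_{S^1 \vee S^1} \circ \sigma_{S^1}$, the rotation interchanging the two hemispheres pinched to the two wedge factors.

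I would then smash this homotopy with $\id_{\G_m}$ and use that $- \wedge \G_m$ commutes with wedge sums to obtain the chain
\[
\sigma_{\P^1} = \sigma_{S^1} \wedge \id_{\G_m} \simeq (\tau_{S^1 \vee S^1} \wedge \id_{\G_m}) \circ (\sigma_{S^1} \wedge \id_{\G_m}) = \tau_{(\P^1,1) \vee (\P^1,1)} \circ \sigma_{\P^1}
\]
in $\sH_\bullet(k)$. Applying the localization functors $\sH_\bullet(k) \to \SH_{S^1}(k) \to \SH_{S^1}(k)/f_2$, all of which preserve based homotopies, wedge sums, and the $(\P^1,1)$ cogroup structure, transports the identity to the desired equation $\tau \circ \sigma_{\P^1} = \sigma_{\P^1}$ in $\SH_{S^1}(k)/f_2$.

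The main subtle point to verify is that the swap $\tau_{S^1 \vee S^1} \wedge \id_{\G_m}$ on $(S^1 \vee S^1) \wedge \G_m$ corresponds to the swap $\tau_{(\P^1,1) \vee (\P^1,1)}$ under the canonical isomorphism $(S^1 \vee S^1) \wedge \G_m \cong (\P^1,1) \vee (\P^1,1)$; this follows directly from the functoriality of smashing with $\G_m$ over wedge sums. An alternative purely geometric route via Remark~\ref{rem:Canon} together with the symmetry $\iota : s \mapsto 1/s$ of $\P^1$ (which preserves $f = s/(s-1)^2$ and swaps $0, \infty$) would give $\tau \circ \sigma_{\P^1} = \sigma_{\P^1} \circ \iota$ in $\SH_{S^1}(k)/f_2$ by equivariance of the deformation construction, but closing this to cocommutativity would require the additional fact that $\sigma_{\P^1} \circ \iota = \sigma_{\P^1}$, which the suspension argument above sidesteps in a single stroke.
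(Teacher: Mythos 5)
Your proof breaks down at the central step: the claim that $\sigma_{S^1}\colon S^1\to S^1\vee S^1$ is cocommutative up to \emph{based} simplicial homotopy, and that ``rotating $S^1$ by angle $\pi$'' realizes such a homotopy. This is false. Rotation by $\pi$ does not fix the basepoint, so it gives only a \emph{free} homotopy, not a based one, and indeed $\sigma_{S^1}$ cannot be cocommutative up to based homotopy: if it were, then $[S^1,Y]_\bullet=\pi_1(Y)$ would be abelian for every pointed space $Y$. A single (simplicial) suspension is a cogroup but not a cocommutative cogroup in the unstable pointed homotopy category; smashing the nonexistent homotopy with $\id_{\G_m}$ and pushing it through the localization functors therefore cannot produce the desired identity. (Your fallback geometric route is acknowledged as incomplete, since it still needs $\sigma_{\P^1}\circ\iota=\sigma_{\P^1}$, which is not immediate.)

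What \emph{does} work, and is arguably even simpler than what you attempted, is to exploit stability rather than fight with the unstable circle. The category $\SH_{S^1}(k)$ is triangulated (hence additive), and so is its Verdier quotient $\SH_{S^1}(k)/f_2$. In an additive pointed category, the counit axiom forces any cogroup structure $\sigma\colon A\to A\oplus A$ to have both components equal to $\id_A$, i.e.\ $\sigma$ must be the codiagonal, which is manifestly cocommutative (this is the Eckmann--Hilton argument). Since $\sigma_{\P^1}$ is a cogroup structure on $(\P^1,1)$ in $\SH_{S^1}(k)/f_2$, cocommutativity follows at once. The paper instead gives a geometric argument in keeping with the rest of the section: it exhibits a family over $U=\P^1\setminus\{\pm1\}$ defined by the diagonal and anti-diagonal in $\P^1\times\P^1$ whose restriction to $0\in U$ is $\sigma$ and whose restriction to $\infty\in U$ is $\tau\circ\sigma$, then concludes by $\A^1$-invariance. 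That route is more in the spirit of the co-transfer machinery being developed and produces an explicit geometric witness, whereas the additive-category argument is shorter but purely formal; your proposed unstable route, however, cannot be repaired as stated.
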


\begin{proof} In $\P^1\times\P^1$, consider the diagonal $\Delta$ and anti-diagonal $\Delta'$, defined by $x_0y_1-x_1y_0=0$ and $x_1y_1-x_0y_0=0$, respectively. In affine coordinates, these are $y=x$ and $y=1/x$, respectively, hence 
$\Delta\cap \Delta'$ consists of the two points $(1,1)$ and $(-1,-1)$. Thus, if we restrict to $U\times\P^1$, $U=\P^1\setminus\{\pm1\}$, the subscheme $\Delta_U\cup\Delta'_U$ of $U\times \P^1$ is \'etale and finite over $U$ and is disjoint from $U\times1$. This gives us the map
\[
\tilde{\sigma}: (U\times\P^1,U\times1)\to (U\times\P^1, U\times\P^1\setminus \Delta_U\cup\Delta'_U)
\]
The composition $U\times\P^1\to \A^1\times\P^1\to \P^1$ shows that the projection induces an isomorphism
\[
(U\times\P^1,U\times1)\to (\P^1,1)
\]
in $\SH_{S^1}(k)/f_2$, while on the other side, we have the Thom isomorphism
\[
 (U\times\P^1, U\times\P^1\setminus \Delta_U\cup\Delta'_U)\cong Th(N_{\Delta_U})\vee Th(N_{\Delta'_U})\cong \Sigma_{\P^1}U_+\vee \Sigma_{\P^1}U_+,
 \]
 which in turn is isomorphic to $(\P^1,1)\vee(\P^1,1)$ in $\SH_{S^1}(k)/f_2$. As the restriction of $\tilde{\sigma}$ to $0\in U$ is the map $\sigma$ and the restriction to $\infty\in U$ is $\sigma$ followed by the exchange isomorphism 
 \[
 \tau: (\P^1,1)\vee  (\P^1,1)\to  (\P^1,1)\vee  (\P^1,1),
 \]
 (using remark~\ref{rem:Canon} to avoid specifying the choice of trivialization in the Thom isomorphism) we have proven the co-commutativity.
 \end{proof}

We now return to our study of properties of the co-transfer map in $\SH_{S^1}(k)/f_2$. We already know that, for a given closed point $\bar{x}\in\P^1_F$, the map
\[
\co_{\bar{x},\bar{f}}:(\P^1_F,1)\to (\P^1_{F(\bar{x})},1)
\]
in $\SH_{S^1}(k)/f_2$ is independent of the choice of generator $f\in m_{\bar{x}}/m^2_{\bar{x}}$; we denote this map by $\co_{\bar{x}}$.

Suppose we have a semi-local smooth $k$-algebra $A$, essentially of finite type, and a finite extension $A\to B$, with $B$ smooth over $k$. Suppose further that $B$ is generated as an $A$-algebra by a single element $x\in B$:
\[
B=A[x].
\]
We say in this case that $B$ is a {\em simply generated} $A$-algebra.

Let $f\in A[T]$ be the minimal monic polynomial of $x$, giving us the point $\bar{x}'$ of $\A_A^1=\Spec A[T]$ with ideal $(f)$. We identify $\A^1_A$ with $\P^1_A\setminus\{1\}$ as usual, giving us the subscheme $\bar{x}$ of $\P^1_A\setminus\{1\}$, smooth over $k$ and  finite   over $\Spec A$, in fact, canonically isomorphic to $\Spec B$ over $\Spec A$ via the choice of generator $x$.   Let 
\[
\phi_x:\bar{x}\to \Spec B
\]
be this isomorphism. We let  $\bar{f}$ be the generator of $m_{\bar{x}}/m^2_{\bar{x}}$ determined by $f$. Via the composition
\[
(\P^1_A,1)\xrightarrow{\co_{\bar{x},\bar{f}}}(\P^1_{\bar{x}},1)\xrightarrow{\phi_x\times\id}(\P^1_B,1)
\]
we have the morphism 
\[
\co_x:(\P^1_A,1)\to (\P^1_B,1)
\]
in $\sH_\bullet(k)$.

\begin{lem}\label{lem:Independence}Suppose that $\Spec B\to \Spec A$ is \'etale over each generic point of $\Spec A$. Then the map $\co_x:(\P^1_A,1)\to (\P^1_B,1)$ in $\SH_{S^1}(k)/f_2$ is independent of the choice of generator $x$ for $B$ over $A$. We write $\co_{B/A}$ for  $\co_x$.
\end{lem}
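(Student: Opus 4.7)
The plan is to interpolate between $x$ and $x'$ by an $\A^1$-family of generators, build the corresponding family of co-transfers, and conclude by $\A^1$-invariance in $\SH_{S^1}(k)/f_2$. First I set $y(t) := (1-t)x + tx' \in B[t]$ and take $U \subset \A^1_A = \Spec A[t]$ to be the open subscheme on which $y(t)$ generates $B \otimes_A \sO(U)$ as an $\sO(U)$-algebra and on which the closed subscheme $\bar Y_U := V(T - y(t)) \subset \A^1_U \subset \P^1_U$ is disjoint from the section $\{1\}$. Both sections $\sigma_0, \sigma_1 : \Spec A \to \A^1_A$ factor through $U$: writing $1, y(t), \dots, y(t)^{d-1}$ in the basis $1, x, \dots, x^{d-1}$ of $B$ yields a matrix $M(t) \in \mathrm{Mat}_d(A[t])$ with $\det M(0) = 1$ and $\det M(1) \in A^\times$, so the non-vanishing locus of $\det M(t)$ is a dense open containing both sections, and the condition that $y(t)$ avoids $\{1\}$ is likewise open and holds at $t=0,1$ by hypothesis on $\bar x, \bar{x}'$. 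The subscheme $\bar Y_U$ is regular (as an open subset of the smooth $k$-scheme $\Spec B \times \A^1$) and finite over $U$, with $T - y(t)$ a global generator of its ideal trivializing the normal bundle; meanwhile $y(t)$ itself furnishes an isomorphism $\phi_y : \bar Y_U \xrightarrow{\sim} \Spec B \times_A U$.

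Next I run the construction of Section~\ref{sec:cotrans} over $U$; the construction together with Lemmas~\ref{lem:Blowup1}--\ref{lem:Blowup2} require only the trivialization of the normal bundle, not semi-locality of the base. This produces
\[
\co_Y := (\phi_y \times \id) \circ \co_{\bar Y_U, \overline{T - y(t)}} : (\P^1_U, 1) \to (\P^1_{\Spec B \times_A U}, 1).
\]
Because $\bar Y_U \to U$ is flat and the ideal $(T - y(t), z)$ defining the center of the blow-up $W_{\bar Y_U} \to \P^1_U \times \A^1$ is locally a regular sequence whose restriction along each $\sigma_\xi$ remains a regular sequence, the entire blow-up/purity construction commutes with the fiberwise pullback along $\sigma_\xi$, so the pullback of $\co_Y$ along $\sigma_\xi$ coincides with $\co_{x_\xi}$ (where $x_0 = x$, $x_1 = x'$). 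This yields, for $\xi \in \{0, 1\}$, commutative squares in $\sH_\bullet(k)$
\[
\xymatrix{
(\P^1_A, 1) \ar[r]^-{\co_{x_\xi}} \ar[d]_-{\sigma_\xi} & (\P^1_B, 1) \ar[d]^-{i_\xi} \\
(\P^1_U, 1) \ar[r]^-{\co_Y} & (\P^1_{\Spec B \times_A U}, 1)
}
\]
where $i_\xi : \Spec B \to \Spec B \times_A U$ is the $t = \xi$ section.

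Finally I pass to $\SH_{S^1}(k)/f_2$. By Lemma~\ref{lem:Birat}, the dense open immersions $U \hookrightarrow \A^1_A$ and $\Spec B \times_A U \hookrightarrow \A^1_B = \Spec B \times_A \A^1_A$ induce isomorphisms after $\Sigma_{\P^1}$, so both squares extend to commutative squares whose bottom row is a common map $\tilde\co_Y : \Sigma_{\P^1} \A^1_{A+} \to \Sigma_{\P^1} \A^1_{B+}$. In $\sH_\bullet(k)$ one has $\sigma_0 = \sigma_1$, both being inverse to the $\A^1$-equivalence $\A^1_A \to \Spec A$, and likewise $i_0 = i_1$ with $i_0$ an isomorphism. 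Combining the two squares gives
\[
i_0 \circ \co_x = \tilde\co_Y \circ \sigma_0 = \tilde\co_Y \circ \sigma_1 = i_0 \circ \co_{x'},
\]
and cancelling the isomorphism $i_0$ yields $\co_x = \co_{x'}$ in $\SH_{S^1}(k)/f_2$.

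The main technical obstacle is verifying that the blow-up construction defining $\co$ restricts correctly along the non-flat closed immersions $\sigma_0, \sigma_1$; this uses flatness of $\bar Y_U \to U$ together with the regular-sequence form of the center, which together imply that the blow-up commutes with the relevant fiberwise pullback. A secondary issue is ensuring $U$ contains both sections while also being dense, but this is forced by the values $\det M(0) = 1$ and $\det M(1) \in A^\times$.
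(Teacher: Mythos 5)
Your proof is correct and follows essentially the same strategy as the paper: interpolate between $x$ and $x'$ by the $\A^1$-family $y(t) = (1-t)x + t x'$, define a family co-transfer over a suitable locus, and conclude by $\A^1$-invariance after passing to $\SH_{S^1}(k)/f_2$ via Lemma~\ref{lem:Birat}. The implementation differs in two ways that are worth noting. First, the paper begins with a reduction to the \'etale case (localizing $A$ at a non-zero divisor), then works over the semi-localization $A(s)$ of $A[s]$ at $m_A A[s] + s(s-1)$ and uses Nakayama's lemma to show that $x(s)$ still generates after this localization; you instead work over the open subscheme $U\subset \A^1_A$ where $\det M(t)$ is invertible. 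Your direct determinant argument replaces the paper's Nakayama argument and avoids the initial \'etale reduction, which is a mild simplification, though it implicitly relies (as does the paper) on $B$ being free of rank $d$ over $A$ so that the matrix $M(t)$ is defined — this is fine after the usual component-by-component reduction to $A$ a domain. Second, you are explicit about the technical point the paper takes for granted: that the deformation-to-the-normal-bundle construction commutes with base change along the non-flat closed immersions $\sigma_0, \sigma_1$. Your observation that this follows because the blow-up center is cut out by a regular sequence whose restriction along $\sigma_\xi$ remains a regular sequence (so the blow-up commutes with the base change) is the right justification, and your note that Lemmas~\ref{lem:Blowup1}--\ref{lem:Blowup2} need only a trivialization of the normal bundle rather than semi-locality of the base is also correct — the triviality of $N$ over $E^U\cong\A^1_{\bar x}$ in the proof of Lemma~\ref{lem:Blowup1} comes from $\sO_{\P^1}(-1)|_{\A^1}$ being trivial once $E\cong\P^1_{\bar x}$, which your explicit generator $T - y(t)$ guarantees. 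One small redundancy: the condition that $\bar Y_U$ avoid the section $\{1\}$ is automatic, since $\bar Y_U$ is defined inside $\A^1_U$, which is already identified with $\P^1_U\setminus\{1\}$.
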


\begin{proof}   We use a deformation argument; we first localize to reduce to the case of an \'etale extension $A\to B$. For this, let $a\in A$ be a non-zero divisor, and let $x$ be a generator for $B$ as an $A$-algebra. Then $x$ is a generator for $B[a^{-1}]$ as an $A[a^{-1}]$-algebra and we have the commutative diagram
\[
\xymatrix{
\P^1_{A[a^{-1}]}\ar[r]\ar[d]_{\co_x}&\P^1_A\ar[d]^{\co_x}\\
\P^1_{B[a^{-1}]}\ar[r]&\P^1_B,
}
\]
with horizontal arrows isomorphisms in $\SH_{S^1}(k)/f_2$. Thus, we may assume that $A\to B$ is \'etale.

Suppose we have generators $x\neq x'$ for $B$ over $A$; let $d=[B:A]$.  Let $s$ be an indeterminate, let $x(s)=sx+(1-s)x'\in B[s]$, and consider the extension $\tilde{B}_s:=A[s][x(s)]$ of $A[s]$, considered as a subalgebra of $B[s]$.  Clearly $\tilde{B}_s$ is finite over $A[s]$.

Let $m_A\subset A$ be the Jacobson radical, and let $A(s)$ be the localization of $A[s]$ at the ideal $(m_AA[s]+s(s-1))$. In other words, $A(s)$ is the semi-local ring of the set of closed points $\{(0,a), (1,a)\}$ in $\A^1\times\Spec A$, as $a$ runs over the closed points of $\Spec A$. Define $B(s):=B\otimes_AA(s)$ and $B_s:=\tilde{B}_s\otimes_AA(s)\subset B(s)$. Let $y=(1,a)$ be a closed point of $A(s)$, with maximal ideal $m_y$, and let $x_y$ be the image of $x$ in $B(s)/m_yB(s)$. Clearly $x_y$ is in the image of $B_s\to B(s)/m_yB(s)$, hence $B_s\to B(s)/m_yB(s)$ is surjective. Similarly, 
$B_s\to B(s)/m_yB(s)$ is surjective for all $y$ of the form $(0,a)$; by Nakayama's lemma $B_s=B(s)$. Also, $B(s)$ and $A(s)$ are regular and $B(s)$ is finite over $A(s)$, hence $B(s)$ is flat over $A(s)$ and thus $B(s)$ is a free $A(s)$-module of rank $d$. Finally, $B(s)$ is clearly unramified over $A(s)$, hence $A(s)\to B(s)$ is \'etale.

Using Nakayama's lemma again, we see that $B(s)$ is generated as an $A(s)$ module by $1, x(s), x(s)^2,\ldots, x(s)^{d-1}$. It follows that  $x(s)$ satisfies a monic   polynomial  equation of degree d over $A(s)$, thus $x(s)$ admits a monic minimal polynomial $f_s$ of degree $d$ over $A(s)$. Sending $T$ to $x(s)$ defines an isomorphism
\[
\phi_s:A(s)[T]/(f_s)\to B(s).
\]
We let $\bar{x}_s\subset\A^1_{A(s)}=\P^1_{A(s)}\setminus\{1\}$ be the closed subscheme of $\P^1_{A(s)}$ corresponding to $f_s$; the isomorphism $\phi_s$ gives us the isomorphism 
\[
\phi_s:\bar{x}_s\to \Spec B(s).
\]
Thus, we may define the map
\[
\co_{x(s)}:(\P^1_{A(s)},1)\to (\P^1_{B(s)},1)
\]
giving us the commutative diagram
\[
\xymatrix{
(\P^1_{A},1)\ar[d]_{\co_{x'}}\ar[r]^-{i_0}&(\P^1_{A(s)},1)\ar[d]_{\co_{x(s)}}&(\P^1_A,1)\ar[d]^{\co_x}\ar[l]_-{i_1}\\
(\P^1_B,1)\ar[r]_-{i_0}&(\P^1_{B(s)},1)&(\P^1_B,1)\ar[l]^-{i_1}
}
\]
By lemma~\ref{lem:Birat} and a limit argument,  the map $(\P^1_{A(s)},1)\to (\P^1_{A[s]},1)$ is an isomorphism in $\SH_{S^1}(k)/f_2$. By homotopy invariance, it follows that the maps $i_0, i_1$ are isomorphisms in $\SH_{S^1}(k)/f_2$, inverse to the map 
$(\P^1_{A(s)},1)\to (\P^1_{A},1)$ induced by the projection $\Spec A(s)\to \Spec A$. Therefore $\co_{x'}=\co_x$, as desired.
\end{proof}

\begin{lem}\label{lem:CoTrId} $\co_{A/A}=\id_{(\P^1_A,1)}$.
\end{lem}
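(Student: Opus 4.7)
The plan is to invoke lemma~\ref{lem:Independence} to pick a convenient generator and then reduce to lemma~\ref{lem:cotrid}. Since the extension $A\to A$ is trivially \'etale at every generic point, lemma~\ref{lem:Independence} applies and the co-transfer $\co_{A/A}$ does not depend on the choice of generator. I take the simplest possible generator, $x=0$, so that $A=A[x]$ and the minimal monic polynomial of $x$ over $A$ is $f=T$. Then the corresponding closed subscheme $\bar{x}\subset \A^1_A\subset\P^1_A$ is the zero section $\{0\}\times\Spec A$, which is smooth, does not meet $1\in\P^1_A$, is finite over $\Spec A$, and the canonical isomorphism $\phi_x:\bar{x}\to\Spec A$ is just the projection.

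Next I track the ingredients of $\co_x$ in the definition preceding lemma~\ref{lem:Independence}. The generator of $m_{\bar{x}}/m_{\bar{x}}^2$ induced by $f=T$ is precisely the class $\bar{s}$ of the standard coordinate $s$ on $\A^1$. By definition
\[
\co_x \;=\; (\phi_x\times\id_{\P^1})\circ \co_{\bar{x},\bar{f}} \;=\; (\phi_x\times\id_{\P^1})\circ \co_{0,\bar{s}},
\]
and under the identification $\bar{x}\cong \Spec A$ given by $\phi_x$, the map $\phi_x\times\id_{\P^1}\colon(\P^1_{\bar{x}},1)\to(\P^1_A,1)$ is literally the identity on $(\P^1_A,1)$. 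So it remains only to identify $\co_{0,\bar{s}}\colon(\P^1_A,1)\to(\P^1_A,1)$ with the identity.

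This is the content of lemma~\ref{lem:cotrid}, stated there for $R=F$ a field but whose proof (the blow-up of $\P^1\times\P^1$ at $(0,0)$, contraction of the $(-1)$-curve $C_0$, and identification of $W'$ with $\P^1\times\P^1\setminus\{1\}$ via the three disjoint sections $s_0',s_1',\Delta'$) works verbatim with the base $\Spec F$ replaced by any smooth semi-local $\Spec R$; all schemes occurring are well-defined relative schemes over $\Spec R$, the sections remain disjoint, and lemma~\ref{lem:Blowup2} was already proved for general such $R$. Applying this with $R=A$ yields $\co_{0,\bar{s}}=\id_{(\P^1_A,1)}$ in $\sH_\bullet(k)$.

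Putting the two steps together, $\co_{A/A}=\co_x=\co_{0,\bar{s}}=\id_{(\P^1_A,1)}$ in $\sH_\bullet(k)$, and a fortiori in $\SH_{S^1}(k)/f_2$. The only point that requires care, and hence the only real obstacle, is the verification that lemma~\ref{lem:cotrid} generalizes from fields to smooth semi-local $k$-algebras; this is a straightforward base-change check rather than a new argument.
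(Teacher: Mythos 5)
Your argument is essentially the same as the paper's: the paper also reduces to lemma~\ref{lem:cotrid} by choosing a constant generator (it says $1$, you say $0$; either way one lands on the case $\bar{x}\in\{0,\infty\}$ handled there). The one thing you add is the explicit observation that lemma~\ref{lem:cotrid}, stated over a field $F$, goes through verbatim over any smooth semi-local base $R$; the paper takes this for granted when it invokes that lemma with $R=A$.
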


\begin{proof} We may choose $1$ as the generator for $A$ over $A$, which gives us the point $\bar{x}=0\in\P^1_A$. The result now follows from lemma~\ref{lem:cotrid}.
\end{proof}

\begin{lem}\label{lem:CoTrComp}  Let $A\to C$ be a finite simply generated extension and $A\subset B\subset C$ a sub-extension, with $B$ also simply generated over $A$. We suppose that $A$, $B$ and $C$ are smooth over $k$, and that $A\to B$ and $A\to C$  are \'etale over each generic point of $\Spec A$, and $B\to C$ is \'etale over each generic point of $\Spec B$. Then
\[
\co_{C/A}=\co_{C/B}\co_{B/A}.
\]
\end{lem}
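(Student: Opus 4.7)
The plan is to reduce the identity $\co_{C/A} = \co_{C/B} \circ \co_{B/A}$ in $\SH_{S^1}(k)/f_2$ to a comparison of blow-up deformations, using the two flexibilities already established: independence of the co-transfer from the choice of generator (Lemma~\ref{lem:Independence}) and compatibility with flat base change (Lemma~\ref{lem:FlatPullback}).

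First I would choose generators compatibly. Pick $y \in B$ generating $B$ over $A$ with minimal monic polynomial $g(T) \in A[T]$, and $z \in C$ generating $C$ over $B$ with minimal polynomial $h(T) \in B[T]$. By a primitive element argument for semi-local rings, possibly after an initial localization of $A$ at a non-zero divisor (harmless by the reduction at the start of the proof of Lemma~\ref{lem:Independence}), choose $\lambda \in A$ so that $w := y + \lambda z$ generates $C$ over $A$. Let $F \in A[T]$ be its minimal polynomial and $\bar{w} \subset \A^1_A \subset \P^1_A \setminus \{1\}$ its zero locus. Under base change along $A \to B$, $F$ factors in $B[T]$ as $F = F_1 F_2$, with $F_1$ the minimal polynomial of $w$ over $B$; correspondingly $\bar{w} \times_A B = \bar{w}_1 \sqcup \bar{w}_2$ in $\P^1_B$ with $\bar{w}_1 \cong \Spec C$. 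The Leibniz rule then ensures that the chosen generator $\bar{F}$ of $m_{\bar{w}}/m_{\bar{w}}^2$ pulls back to a unit multiple of the chosen generator $\bar{F}_1$ of $m_{\bar{w}_1}/m_{\bar{w}_1}^2$, since $F_2$ is a unit on $\bar{w}_1$.

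Next I would compare the two deformations. The composite $\co_{C/B} \circ \co_{B/A}$ is built from a blow-up of $\P^1_A \times \A^1$ at $(\bar{y}, 0)$ (whose exceptional divisor is identified with $\P^1_{\bar{y}} = \P^1_B$), followed by a second blow-up of $\P^1_B \times \A^1$ at $(\bar{z}, 0)$ with $\bar{z} \cong \Spec C$. The direct map $\co_{C/A}$ comes from a single blow-up of $\P^1_A \times \A^1$ at $(\bar{w}, 0)$. I would apply Lemma~\ref{lem:FlatPullback} along the étale base change $A \to B$ to identify the pullback of the direct-map deformation with the deformation at $\bar{w}_B = \bar{w}_1 \sqcup \bar{w}_2$. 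By Lemma~\ref{lem:Independence} together with the generator comparison above, the $\bar{w}_1$-component of the resulting co-transfer is exactly $\co_{C/B}$, while the $\bar{w}_2$-component splits off as an extra wedge summand in $(\P^1_C, 1) \vee (\P^1_{\bar{w}_2}, 1)$, using Remark~\ref{rem:Canon} to kill any dependence on the choice of trivialization.

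The main obstacle is matching the exceptional geometry of the iterated blow-up with that of the single blow-up in the localized category $\SH_{S^1}(k)/f_2$. The decisive input for this is Lemma~\ref{lem:CanonThomIso}, which in $\SH_{S^1}(k)/f_2$ renders the Thom-space identifications canonical and trivialization-free, so that one only has to keep track of the underlying closed subschemes rather than specific generators of conormal sheaves. Combined with Lemma~\ref{lem:Semiloc} for the semi-local reduction around $\{0, 1\} \subset \A^1$, the comparison then collapses to the purely schematic identification of $\bar{z} \subset \P^1_B$ with $\bar{w}_1 \subset \P^1_B$, which is exactly what the choice of $w$ was designed to produce.
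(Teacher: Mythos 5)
Your proposal has a genuine gap at the central step. Lemma~\ref{lem:FlatPullback} gives a commuting square whose vertical arrows are the \emph{forward} maps $\pi:(\P^1_B,1)\to(\P^1_A,1)$ and $\pi':(\P^1_{\bar{w}_B},1)\to(\P^1_{\bar{w}},1)$ induced by $\Spec B\to\Spec A$ and $\bar{w}_B\to\bar{w}$, so after decomposing $\bar{w}_B=\bar{w}_1\sqcup\bar{w}_2$ and using Remark~\ref{rem:Additive} it produces a relation of the shape $\co_{C/B}+\pi_2'\circ\co_{\bar{w}_2}=\co_{C/A}\circ\pi$ in $\SH_{S^1}(k)/f_2$. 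This is a base-change identity, not the composition identity you need: $\pi$ goes in the opposite direction from $\co_{B/A}$ and cannot be substituted for it. Nothing in the proposal produces the composition $\co_{C/B}\circ\co_{B/A}$ on either side, and the final appeal to Lemma~\ref{lem:CanonThomIso} and Lemma~\ref{lem:Semiloc} to ``match the exceptional geometry of the iterated blow-up with that of the single blow-up'' is only an assertion; those lemmas remove the dependence on conormal trivializations, but they do not relate a composite of two deformation-to-normal-cone diagrams to a single one.

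What is missing is a device that actually links the exceptional divisor $\P^1_B$ of the first blow-up to a generator $\bar y\subset\P^1_A$ of $C/A$. The paper supplies this via the deformation $y(s)=sy+(1-s)x$, where $x$ generates $B/A$ and $y$ generates $C/A$: over $A(s)$ this family $\bar y(s)\subset\P^1_{A(s)}$ is singular at $s=0$ over $(\bar x,0)$, and blowing up $\P^1\times\A^1$ along $(\bar x,0)$ -- i.e., the very blow-up $W_{\bar x}$ defining $\co_{B/A}$ -- simultaneously resolves the family and places $\tilde y(0)$ inside $E\cong\P^1_B$ as a generator of $C/B$. Homotopy invariance in $s$ and Lemma~\ref{lem:Semiloc} then identify the single blow-up description of $\co_{C/A}$ with the two-stage factorization $\co_{C/B}\circ\co_{B/A}$. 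Your primitive-element choice $w=y+\lambda z$ hints at a similar interpolation in the parameter $\lambda$, but you never run the deformation, and the flat-base-change square cannot do that work for you.
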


\begin{proof} This is another deformation argument. As in the proof of lemma~\ref{lem:Independence}, we may assume that $A\to B$, $B\to C$ and $A\to C$ are \'etale extensions. Let $y$ be a generator for $C$ over $A$, $x$ a generator for $B$ over $A$. These generators give us corresponding closed subschemes $\bar{y},\bar{x}\subset \P^1_A$ and  $\bar{y}_B\subset \P^1_B$. Let 
$y(s)=sy+(1-s)x$, giving $\bar{y}(s)\subset \P^1_{A(s)}$.  Note that $\bar{y}(1)=\bar{y}$, $\bar{y}(0)_\red=\bar{x}$

As in the proof of lemma~\ref{lem:Independence}, the element $y(s)$ of $C(s)$ is a generator over $A(s)$ after localizing at the points of $\Spec A(s)$ lying over $s=1$. The subscheme $\bar{y}(s)$ in a neighborhood of $s=0$ is not in general regular, hence $y(s)$ is not a generator of $C(s)$ over $A(s)$. However, let $\mu:W:=W_{\bar{x}}\to\P^1\times\A^1$ be the blow-up along $\{(\bar{x},0)\}$, and let $\tilde{y}\subset W_{A(s)}$ be the proper transform $\mu^{-1}[\bar{y}]$. An elementary local computation shows that this blow-up resolves the singularities of $\bar{y}(s)$, and that $\tilde{y}$ is \'etale over $A(s)$; the argument used in the proof of lemma~\ref{lem:Independence} goes through to show that $A(s)(\tilde{y})\cong C(s)$. In addition, let $C_0$ be the proper transform to $W_{A(s)}$ of $\P^1\times0$ and $E$ the exceptional divisor, then $\tilde{y}(0)$ is disjoint from $C_0$. Finally, after identifying $E$ with $\P^1_{A[\bar{x}]}$ (using a the monic minimal polynomial of $x$ as a generator for $m_{\bar{x}}$), we may consider $\tilde{y}(0)$ as a closed subscheme of $\P^1_B$; the isomorphism $A(s)(\tilde{y})\cong C(s)$ leads us to conclude that $A(\tilde{y}(0))=B(\tilde{y})= C$. By lemma~\ref{lem:Independence}, we may use $\tilde{y}$ to define $\co_{C/B}$.

The map $\co_{C/A}$ in $\SH_{S^1}(k)/f_2$ is defined via the diagram
\[
(\P^1_A,1)\to (\P^1_A,\P^1_A\setminus\bar{y})\cong (\P^1_C,1)
\]
where the various choices involved lead to equal maps. The inclusions $i_1:\P^1_A\to W_{A(s)}$,
$i_0:\P^1_{A[\bar{x}]} \to W_{A(s)}$ induce  isomorphisms (in $\SH_{S^1}(k)/f_2$)
\[
(\P^1_A,\P^1_A\setminus\bar{x})\cong (W_{A(s)}, W_{A(s)}\setminus\tilde{y}(s))
\cong (\P^1_{A[\bar{x}]}, \P^1_{A[\bar{x}]}\setminus \tilde{y}(0)).
\]
As in the proof of lemma~\ref{lem:Independence}, we can use  homotopy invariance to see that $\co_{C/A}$ is also equal to the composition
\begin{multline*}
(\P^1_A,1)\to (\P^1_A,\P^1_A\setminus\bar{y})\xrightarrow{i_1}(W_{A(s)}, W_{A(s)}\setminus\tilde{y}(s))\\
\xrightarrow{i_0^{-1}}
 (\P^1_{A[\bar{x}]}, \P^1_{A[\bar{x}]}\setminus \tilde{y}(0))
\cong (\P^1_{C},1).
\end{multline*}

Now   let $s_{1A(s)}$ be the transform to $W_{A(s)}$ of the 1-section. The above factorization of $\co_{C/A}$ shows that this map is also equal to the composition
\[
(\P^1_A,1)\xrightarrow{i_1}(W_{A(s)}, C_{0}\cup s_{1 A(s)})
\xrightarrow{i_0^{-1}}
(\P^1_{A[\bar{x}]},1)\to
(\P^1_{A[\bar{x}]}\setminus \tilde{y}(0))
\cong (\P^1_{C},1).
\]
Using lemma~\ref{lem:Semiloc}, this latter composition is $\co_{C/B}\circ (\co_{B/A})$, as desired.
\end{proof}

\begin{rem}\label{rem:Additive} 1. Suppose we have simply generated finite generically \'etale extensions $A_1\to B_1$, $A_2\to B_2$, with $A_i$ smooth, semi-local and essentially of finite type over $k$. Then
\[
\co_{B_1\times B_2/A_1\times A_2}=\co_{B_1/A_1}\vee\co_{B_2/A_2}
\]
where we make the   evident identification $(\P^1_{B_1\times B_2},1)=(\P^1_{B_1},1)\vee(\P^1_{B_2},1)$ and similarly for $A_1, A_2$.\\
\\
2. Let $B_1, B_2$ be simply generated finite generically \'etale $A$ algebras and let $B=B_1\times B_2$. As a special case of lemma~\ref{lem:CoTrComp}, we have
\[
\co_{B/A}=(\co_{B_1/A}\vee\co_{B_2/A})\circ\sigma_{\P^1_A}
\]
Indeed, we may factor the extension $A\to B$ as $A\xrightarrow{\delta} A\times A\to B_1\times B_2=B$. We then use (1) and note that $\sigma_{\P^1_A}=\co_{A\times A/A}$ by lemma~\ref{lem:ComultIdent}.
\end{rem}

Next, we make a local calculation. Let $(A,m)$ be a local ring of essentially of finite type and smooth over $k$. Let $s\in m$ be a parameter and let $B=A[T]/T^n-s$ and let $t\in B$ be the image of $T$. Set $Y=\Spec B$, $X=\Spec A$, $Z=\Spec A/(s)$, $W=\Spec B/(t)$; the extension $A\to B$ induces an isomorphism $\alpha:W\xrightarrow{\sim}Z$. We write $\co_{Y/X}$ for $\co_{B/A}$, etc. This gives us the diagram in $\SH_{S^1}(k)/f_2$
\[
\xymatrix{
\P^1_Z\ar[r]^{i_Z}&\P^1_X\ar[d]^{\co_{Y/X}}\\
\P^1_W\ar[r]_{i_W}\ar[u]^\alpha&\P^1_Y.
}
\]

\begin{lem}\label{lem:TotRam} Suppose that $n$ is prime to $\Char k$. In $\SH_{S^1}(k)/f_2$ we have
\[
 \co_{Y/X}\circ i_Z\circ \alpha=n\times i_W.
\]
\end{lem}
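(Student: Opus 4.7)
The plan is to compute $\co_{Y/X}\circ i_Z\circ \alpha$ entirely in $\SH_{S^1}(k)/f_2$ by rewriting both sides in terms of the canonical purity isomorphism of lemma~\ref{lem:CanonThomIso} and reducing the remaining vertical computation to lemma~\ref{lem:Mult}. Choose $x = t$ as generator of $B$ over $A$; the associated subscheme $\bar{y}=\Spec A[T]/(T^n-s)\subset \A^1_A\subset \P^1_A$ is cut out by $f := T^n - s$, and $\phi_t:\bar{y}\xrightarrow{\sim} Y$ is the canonical identification. Combining the construction in section~\ref{sec:cotrans} with lemmas~\ref{lem:Blowup1}--\ref{lem:Blowup2} and lemma~\ref{lem:CanonThomIso}, one first checks that in $\SH_{S^1}(k)/f_2$ the map $\co_{Y/X}$ is realized as the composite
\[
(\P^1_X,1)\longrightarrow (\P^1_X,\P^1_X\setminus \bar{y})\xrightarrow{\;\sim\;} \Sigma_{\P^1}Y_+\cong (\P^1_Y,1),
\]
where the middle isomorphism is the canonical purity isomorphism.

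Precomposing with $i_Z\circ\alpha$ and using that $f|_{\P^1_Z} = T^n$, we see that $i_Z^{-1}(\bar{y}) = W$ set-theoretically, so the composition factors through $(\P^1_Z,\P^1_Z\setminus W)$. By lemma~\ref{lem:CanonThomIso} and the argument of remark~\ref{rem:Canon} applied to the section $W\subset \P^1_Z$ (identified with $Z$ via $\alpha$), the composite $(\P^1_W,1)\xrightarrow{\alpha}(\P^1_Z,1)\to (\P^1_Z,\P^1_Z\setminus W)\xrightarrow{\sim}(\P^1_W,1)$ is the identity in $\SH_{S^1}(k)/f_2$. The problem is thus reduced to identifying the induced map
\[
\Psi:(\P^1_W,1)\xrightarrow{\sim}(\P^1_Z,\P^1_Z\setminus W)\longrightarrow (\P^1_X,\P^1_X\setminus \bar{y})\cong (\P^1_Y,1)
\]
with $n\cdot i_W$.

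To compute $\Psi$, I would work Nisnevich-locally around the closed point of $W$, replacing $(T,s)$ by the regular parameters $(T,u)$ on $\A^1_X$ with $u := T^n - s$. In these coordinates $\bar{y}$ is cut out by $u=0$ while $\P^1_Z$ is cut out by $u = T^n$, so the inclusion $\P^1_Z\hookrightarrow \P^1_X$ in a neighborhood of $W$ is the graph of the map $T\mapsto T^n$ from the $T$-axis to the $u$-axis. Under the canonical Thom identifications of lemma~\ref{lem:CanonThomIso}, the \emph{horizontal} $T$-direction matches the closed immersion $W\hookrightarrow \bar{y}\cong Y$, namely $i_W$, while the \emph{vertical} $u$-direction produces exactly the endomorphism $\mu_n:(\P^1,1)\to (\P^1,1)$ of lemma~\ref{lem:Mult}, which equals multiplication by $n$ in $\SH_{S^1}(k)/f_2$. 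Assembling the two factors gives $\Psi = n\cdot i_W$, which finishes the proof.

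The main obstacle is this last step: one must justify the local change of regular parameters and show carefully that under the canonical purity isomorphism of lemma~\ref{lem:CanonThomIso} the $u$-direction isolates the $\mu_n$-factor cleanly, independently of the horizontal inclusion $W\hookrightarrow Y$. Once that local model is in place, lemmas~\ref{lem:CanonThomIso} and~\ref{lem:Mult} supply the factor $n$ and the inclusion $i_W$, respectively.
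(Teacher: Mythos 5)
Your proposal follows the paper's own route almost exactly up to the last step: rewriting $\co_{Y/X}\circ i_Z\circ\alpha$ as a map of Thom spaces, factoring through $(\P^1_Z,\P^1_Z\setminus W)\to(\P^1_X,\P^1_X\setminus\bar{y})$, reducing by Nisnevich excision to the Hensel local model, and performing the coordinate change $(T,s)\mapsto(T,u=T^n-s)$ (the paper uses $(s,t)\mapsto(s-t^n,t)$ followed by a sign flip, which is the same thing). The gap you flag as ``the main obstacle'' is genuine, and the paper does not leave it to the reader: ``assembling the two factors'' is not automatic, because $\Psi$ is a single induced map on Thom spaces and has no a priori smash decomposition into a base map $i_W$ and a fiber map $\mu_n$.

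The paper closes the gap with an explicit $\A^1$-homotopy. In the transformed coordinates write the Hensel neighborhood as $U$ with coordinates $(u,t)$, $\bar{y}=\{u=0\}$, and the transformed $\P^1_Z$ parametrized by $\phi(t)=(t^n,t)$. The map $\Psi$ is computed by $\phi:(U_Z,U_Z\setminus 0)\to(U,U\setminus\{u=0\})$. One then considers
\[
\Phi:U_Z\times\A^1\to U,\qquad \Phi(t,v)=(t^n,vt),
\]
which is a map of pairs $(U_Z,U_Z\setminus0)\times\A^1\to(U,U\setminus\{u=0\})$. At $v=1$ it is $\phi$; at $v=0$ it factors as $U_Z\xrightarrow{\mu_n}U_Z\xrightarrow{\beta}X\xrightarrow{i_0}U$, where $\beta$ identifies the $t$-axis with $X$ and $i_0$ is the zero section. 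By $\A^1$-invariance the two ends agree in $\sH_\bullet(k)$, and passing to Thom pairs and invoking lemma~\ref{lem:Mult} (via the excision identification $(U_Z,U_Z\setminus 0)\cong(\P^1_W,\infty)$) turns the $\mu_n$ factor into multiplication by $n$ in $\SH_{S^1}(k)/f_2$, while $i_0\circ\beta$ gives $i_W$. This is the missing justification; without it, the claim that the ``$u$-direction isolates the $\mu_n$-factor cleanly, independently of the horizontal inclusion'' is precisely the statement to be proved, not an input.

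So: same coordinate change, same target identification, but you need the $\A^1$-homotopy $\Phi(t,v)=(t^n,vt)$ (or some equivalent deformation) to actually produce the factorization $\Psi=n\cdot i_W$. Everything else in your sketch lines up with the paper's proof.
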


\begin{proof} First, suppose we have a Nisnevich neighborhood $f:X'\to X$ of $Z$ in $X$, giving us the Nisnevich neighborhood $g:Y':=Y\times_XX'\to Y$ of $W$ in $Y$. As
\[
\co_{Y/X}\circ f=g\circ \co_{Y'/X'}
\]
we may replace $X$ with $X'$, $Y$ with $Y'$. Similarly, we reduce to the case of $A$ a Hensel DVR, i.e., the Henselization of $0\in\A^1_F$ for some field $F$, $Z=W=0$, with $s$ the image in $A$ of the canonical coordinate on $\A^1_F$.

The map $\co_{Y/X}$ is defined by the closed immersion 
\[
Y\xrightarrow{i_Y} \A^1_X=\P^1_X\setminus\{1\}\subset \P^1_X
\]
where $i_Y$ is the closed subscheme of $\A^1=\Spec A[T]$ defined by $T^n-s$, together with the isomorphism
\[
(\P^1_X,\P^1_X\setminus Y)\cong \P^1_Y
\]
furnished by the blow-up $\mu:W_Y\to \A^1\times\A^1_X$ of $\A^1\times\A^1_X$ along $(Y,0)$. The composition $\co_{Y/X}\circ i_Z\circ \alpha$ is given by the composition
\begin{multline*}
(\P^1_W,1)\cong (\P^1_W,\P^1_W\setminus \{0\})\xrightarrow{\alpha}(\P^1_Z,\P^1_Z\setminus \{0\})\\
\xrightarrow{i_Z}(\P^1_X,\P^1_X\setminus \{0\})\xleftarrow{\id}(\P^1_X,1)\to (\P^1_X,\P^1_X\setminus Y)\cong (\P^1_Y,1).
\end{multline*}
In both cases, the isomorphisms (in $\SH_{S^1}(k)/f_2$) are independent of a choice of the respective defining equation. Let   $U\to \P^1_X$ be the Hensel local neighborhood of $(0,0)$ in $\P^1_X$,  $\Spec\sO^h_{\P^1_X,(0,0)}$, and let $U_Z\subset U$ be the fiber of $U$ over $Z$, i.e., the subscheme $s=0$.  We may use excision to rewrite the above description of $\co_{Y/X}\circ i_Z\circ \alpha$ as a composition  as
\[
(\P^1_W,1)\cong   (U_Z,U_Z\setminus\{(0,0)\})\xrightarrow{i_Z}(U,U\setminus Y)\cong (\P^1_Y,1).
\]
Similarly, letting $i_0:X\to X\times\P^1$ be the 0-section,  the map $i_W$ may be given by the composition
\[
(\P^1_W,1) \cong (X,X\setminus Z)\xrightarrow{i_0} (U, U\setminus Y)\cong (\P^1_Y,1);
\]
again, the isomorphisms in  $\SH_{S^1}(k)/f_2$ are independent of choice of defining equations.

We change coordinates in $U$ by the isomorphism $(s,t)\mapsto (s-t^n, t)$. This transforms $Y$ to the subscheme $s=0$, is the identity on the 0-section, and transforms $s=0$ to the graph of $t^n+s=0$. Replacing $s$ with $-s$, we have just switched the roles of $Y$ and $U_Z$. Let 
\[
\phi:U_Z\to U
\]
be the map  $\phi(t)=(t^n,t)$. After making our change of coordinates, the map  $\co_{Y/X}\circ i_Z\circ \alpha$ is identified with
\[
(\P^1_W,1)\cong   (U_Z,U_Z\setminus\{(0,0)\})\xrightarrow{\phi}(U,U\setminus U_Z)\cong (\P^1_Y,1)
\]
while the description of $i_W$ becomes 
\[
(\P^1_W,1) \cong (X,X\setminus Z)\xrightarrow{i_0} (U, U\setminus U_Z)\cong (\P^1_Y,1);
\]

We now construct an $\A^1$-family of maps $(U_Z,U_Z\setminus\{(0,0)\})\to (U,U\setminus U_Z)$. Let
\[
\Phi:U_Z\times\A^1\to U
\]
be the map   $\Phi(t, v)=(t^n,vt)$. Note that $\Phi$ defines a map of pairs
\[
\Phi:(U_Z, U_Z\setminus\{0\})\times\A^1\to (U,U\setminus U_Z).
\]
Clearly $\Phi(-,1)=\phi$ while $\Phi(-,0)$ factors as
\[
U_Z\xrightarrow{\mu_n}U_Z\xrightarrow{\beta}X\xrightarrow{i_0}U
\]
where $\mu_n$ is the map  $t\mapsto t^n$ and $\beta$ is the isomorphism $\beta(t)=s$. Thus, we can rewrite $\co_{Y/X}\circ i_Z\circ \alpha$ as
\[
(\P^1_W,1)\cong   (X, X\setminus Z)\xrightarrow{\mu_n} (X, X\setminus Z)\xrightarrow{i_0}(U,U\setminus U_Z)\cong (\P^1_Y,1)
\]

We identify $X$ with the Hensel neighborhood of $0$ in $\P^1_Z$. Using excision again, we have the commutative diagram in $\sH_\bullet(k)$
\[
\xymatrix{
(X, X\setminus Z)\ar[r]^{\mu_n}\ar[d]&(X, X\setminus Z)\ar[d]\\
(\P^1_Z,\P^1_Z\setminus \{0\})\ar[r]^{\mu_n}&(\P^1_Z,\P^1_Z\setminus\{0\})\\
(\P^1_Z,\infty)\ar[r]^{\mu_n^Z}\ar[u]&(\P^1_Z,\infty)\ar[u]
}
\]
where the vertical arrows are all isomorphisms. By lemma~\ref{lem:Mult} the bottom map is multiplication by $n$, which completes the proof.
\end{proof}

\begin{lem}\label{lem:Unram} Let $A\to B$ be a  finite simple \'etale extension. Let $X=\Spec A$, $Y=\Spec B$, let  $i_x:x\to X$ be the closed point of $X$ and $i_y:y\to Y$ the inclusion of $y:=x\times_XY$. Then
\[
\co_{Y/X}\circ i_x=i_y\circ \co_{y/x}.
\]
\end{lem}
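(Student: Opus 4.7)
The plan is to construct compatible deformation-to-the-normal-bundle diagrams for $\bar Y\subset \P^1_X$ and $y\subset \P^1_x$, related by the closed immersion $\P^1_x\times\A^1\hookrightarrow \P^1_X\times\A^1$, and to read off both co-transfers from the resulting commutative rectangle.

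First I would choose a generator $t\in B$ of $B/A$ with monic minimal polynomial $f(T)\in A[T]$, producing a closed subscheme $\bar Y\subset\A^1_X\subset\P^1_X$ together with a canonical isomorphism $\phi_t\colon\bar Y\xrightarrow{\sim}Y$. Since $A\to B$ is \'etale, $B\otimes_A k(x)=k(x)[T]/(\bar f)$ is a product of fields, where $\bar f=f\bmod m_x$ factors as a product $\prod_i\bar f_i$ of distinct monic irreducibles. Each $\bar f_i$ cuts out a closed point $y_i\subset\A^1_x$, and $y=\coprod_i y_i$ coincides scheme-theoretically with $\bar Y\cap \P^1_x$. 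Thus $f$ and $\bar f$ are compatible defining equations: $f$ generates $m_{\bar Y}/m_{\bar Y}^2$ and $\bar f$ generates $m_y/m_y^2$.

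Next I would compare the two blow-ups: let $\mu\colon W=W_{\bar Y}\to \P^1_X\times\A^1$ be the blow-up along $(\bar Y,0)$, with exceptional divisor $E$ and proper transforms $C_0$, $s_1$, and let $\mu_x\colon W_y\to \P^1_x\times\A^1$ be the blow-up along $(y,0)$, with data $E^y$, $C_0^y$, $s_1^y$. Because the center $(\bar Y,0)$ is contained in $\P^1_x\times\A^1$ with scheme-theoretic intersection equal to $(y,0)$, the strict transform under $\mu$ of the closed immersion $\P^1_x\times\A^1\hookrightarrow \P^1_X\times\A^1$ is canonically identified with $W_y$, giving a closed immersion $j\colon W_y\hookrightarrow W$ compatible with the projections to $\A^1$. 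Under $j$ one has $j^{-1}(E)=E^y$, $j^{-1}(C_0)=C_0^y$, $j^{-1}(s_1)=s_1^y$, and the identification $E\cong \P^1_Y$ (using $f/t$ as coordinate) restricts to $E^y\cong \P^1_y$ (using $\bar f/t$) compatibly with the fiber inclusion $\P^1_y\hookrightarrow \P^1_Y$. Consequently the diagram
\[
\xymatrix{
(\P^1_x,1)\ar[r]^-{i_1^y}\ar[d]_{i_x} & (W_y,C_0^y\cup s_1^y)\ar[d]^{j} & (\P^1_y,1)\ar[l]_-{i_0^y}\ar[d]^{i_y}\\
(\P^1_X,1)\ar[r]^-{i_1} & (W,C_0\cup s_1) & (\P^1_Y,1)\ar[l]_-{i_0}
}
\]
commutes. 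By Lemmas~\ref{lem:Blowup1} and~\ref{lem:Blowup2}, the horizontal maps $i_0^y$ and $i_0$ are isomorphisms in $\sH_\bullet(k)$. By the construction of the co-transfer in Section~\ref{sec:cotrans}, the top row realizes $\co_{y/x}$ and the bottom row realizes $\co_{Y/X}$ (after inverting $i_0^y$, $i_0$ respectively). Reading off the outer rectangle yields $\co_{Y/X}\circ i_x=i_y\circ \co_{y/x}$ in $\sH_\bullet(k)$, hence in $\SH_{S^1}(k)/f_2$.

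The main obstacle is the functoriality of the purity construction, namely the strict-transform identification $W_y\cong \mu^{-1}[\P^1_x\times\A^1]$ and the compatibility of the chosen trivializations $E\cong\P^1_Y$ and $E^y\cong\P^1_y$ with $j$. Both points reduce to a local computation on the standard affine charts of the blow-up, using that $(\bar Y,0)\cap(\P^1_x\times\A^1)=(y,0)$ scheme-theoretically (this is where \'etaleness of $A\to B$ enters) and that $\bar f$ is by construction the reduction of $f$.
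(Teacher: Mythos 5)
Your argument is correct and follows essentially the same route as the paper, which is much more terse: the paper merely displays the commutative square of inclusions $\P^1_x\setminus y\to\P^1_x$ over $\P^1_X\setminus Y\to\P^1_X$ and asserts that the lemma ``follows easily.'' Your blow-up comparison (the strict-transform identification $W_y\cong\mu^{-1}[\P^1_x\times\A^1]$ and the matching of the exceptional and proper-transform data under $j$, which relies on $(\bar Y,0)\cap(\P^1_x\times\A^1)=(y,0)$ scheme-theoretically, i.e.\ on \'etaleness) is precisely the content of that elided step, making explicit that the purity identifications used to read off $\co_{y/x}$ and $\co_{Y/X}$ are compatible with $i_x$ and $i_y$.
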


\begin{proof} Take an embedding of $Y$ in $\A^1_X=\P^1_X\setminus\{1\}\subset\P^1_X$; the fiber of $Y\to \A^1_X$ over $x\to X$ is thus an embedding  $y\to \A^1_x=\P^1_x\setminus\{1\}\subset\P^1_x$. The result follows easily from the commutativity of the diagram
\[\xymatrix{
\P^1_x\setminus y \ar[r]\ar[d]&\P^1_x\ar[d]\\
\P^1_X\setminus Y\ar[r]&\P^1_X
}
\]
\end{proof}

\begin{prop} \label{prop:LocalCyc} Let $A\to B$ be a finite generically \'etale extension, with $A$ a DVR and $B$ a semi-local principal ideal ring. Let $X=\Spec A$, $Y=\Spec B$, let  $i_x:x\to X$ be the closed point of $X$ and $i_y:y\to Y$ the inclusion of $y:=x\times_XY$. Write $y=\{y_1,\ldots, y_r\}$, with each $y_i$ irreducible. Let $n_i$ denote the ramification index of $y_i$; suppose that each $n_i$ is prime to $\Char k$.  Then
\[
\co_{Y/X}\circ i_x=\sum_{i=1}^r n_i\cdot i_{y_i}\circ \co_{y_i/x}.
\]
\end{prop}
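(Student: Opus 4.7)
The plan is to split $B$ into its DVR factors, factor each factor through its maximal unramified sub-extension, and reassemble the contributions using the composition formula (lemma~\ref{lem:CoTrComp}), the unramified case (lemma~\ref{lem:Unram}), the totally ramified case (lemma~\ref{lem:TotRam}), and additivity of co-transfer (remark~\ref{rem:Additive}).

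First I would use that $B$, being a semi-local PIR with maximal ideals indexed by $y_1,\ldots,y_r$, decomposes as $B = B_1\times\cdots\times B_r$ with each $B_i$ a DVR of ramification index $n_i$ over $A$; set $Y_i = \Spec B_i$. Iterating remark~\ref{rem:Additive}(2) then gives
\[
\co_{Y/X} \;=\; (\co_{Y_1/X}\vee\cdots\vee \co_{Y_r/X})\circ \sigma^{(r)}_{\P^1_A},
\]
where $\sigma^{(r)}$ denotes the $(r{-}1)$-fold iterated co-multiplication on $(\P^1_A,1)$. Naturality of $\sigma$ along the pull-back $i_x$ identifies $\co_{Y/X}\circ i_x$ with the sum $\sum_i \co_{Y_i/X}\circ i_x$ in $\Hom_{\SH_{S^1}(k)/f_2}((\P^1_x,1), (\P^1_Y,1))$, so it suffices to establish, for each $i$, that
\[
\co_{Y_i/X}\circ i_x \;=\; n_i\cdot i_{y_i}\circ\co_{y_i/x}.
\]

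For a fixed $i$ I would then let $A_i'\subseteq B_i$ be the integral closure of $A$ in the maximal sub-extension of $\mathrm{Frac}(B_i)$ unramified over $A$. Then $A\to A_i'$ is finite étale with closed point $y_i$, while $A_i'\to B_i$ is totally tamely ramified of degree $n_i$ (tameness from the hypothesis $(n_i,\Char k)=1$). Lemma~\ref{lem:CoTrComp} yields $\co_{Y_i/X} = \co_{Y_i/\Spec A_i'}\circ \co_{\Spec A_i'/X}$, and lemma~\ref{lem:Unram} applied to the étale extension $A\to A_i'$ gives $\co_{\Spec A_i'/X}\circ i_x = i_{y_i}\circ\co_{y_i/x}$. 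Combining these reduces the desired identity to
\[
\co_{Y_i/\Spec A_i'}\circ i_{y_i} \;=\; n_i\cdot i_{y_i},
\]
which is exactly the conclusion of lemma~\ref{lem:TotRam} (with $Z=W=y_i$ and $\alpha=\id$), provided the totally ramified extension $A_i'\to B_i$ is presented in the Kummer form $A_i'[T]/(T^{n_i}-s)$ with $s$ a uniformizer of $A_i'$.

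The main obstacle I expect is this last Kummer-form reduction. The plan here is to Nisnevich-localize $A_i'$ at $y_i$---legitimate by the opening reduction in the proof of lemma~\ref{lem:TotRam} together with lemma~\ref{lem:FlatPullback}---and then use Hensel's lemma, together with the tameness condition $(n_i,\Char k)=1$, to extract an $n_i$-th root of the unit relating an arbitrary uniformizer of $B_i$ to an element of $A_i'$, thereby placing the extension in the required Kummer shape. Once this structural step is in place, lemma~\ref{lem:TotRam} applies directly, and summing the resulting identities over $i$ delivers the proposition.
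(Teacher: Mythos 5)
Your overall strategy---split at the maximal ideals, then factor each local piece through the maximal unramified sub-extension and combine lemma~\ref{lem:CoTrComp}, lemma~\ref{lem:Unram}, lemma~\ref{lem:TotRam}, and remark~\ref{rem:Additive}(2)---is precisely the proof the paper gives. However, your opening reduction has a genuine gap. You assert that $B$, being a semi-local principal ideal ring with maximal ideals $y_1,\ldots,y_r$, decomposes as a product $B = B_1\times\cdots\times B_r$ of DVRs. This is false in general: a semi-local Dedekind domain with several maximal ideals (e.g.\ the localization of a number ring at the primes above a split prime $p$) is a PID, hence a PIR, but it is a domain and so admits no such product decomposition. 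Remark~\ref{rem:Additive}(2) requires $B$ to actually be a product, so it cannot be applied directly as you propose.

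The paper's fix is to pass to the Henselization $A\to A^h$ as the \emph{very first} step. After base change, $B\otimes_A A^h$ splits (by Hensel's lemma) into a product of Hensel DVRs indexed by the points $y_i$, at which point remark~\ref{rem:Additive}(2) lets you reduce to $r=1$. You do invoke Nisnevich localization later---only inside the Kummer-form reduction---but that is too late: the product decomposition you rely on at the outset is already unavailable without it. Once the Henselization is moved to the front, your argument goes through; indeed the Kummer-form reduction you flag as the ``main obstacle'' becomes immediate, since a tamely ramified extension of Hensel DVRs with trivial residue extension is exactly of the form $t^n=s$ (this is the one-line observation the paper makes), and you do not need a separate Nisnevich-localization step there.
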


\begin{proof} By passing to the Henselization $A\to A^h$, we may assume $A$ is Hensel. By remark~\ref{rem:Additive}(2), we may assume that $r=1$. Let $A\to B_0\subset B$ be the maximal unramified subextension. As $\co_{B/A}=\co_{B/B_0}\circ\co_{B_0/B}$, we reduce to the two cases $A=B_0$, $B=B_0$. We note that a finite separable extension of Hensel DVRs $A\to B$ with trivial residue field extension degree is isomorphic to an extension of the form $t^n=s$ for some $s\in m_A\setminus m_A^2$. Thus,  the  first case is lemma~\ref{lem:TotRam}, the second is lemma~\ref{lem:Unram}.
\end{proof}

Consider the functor
\[
(\P^1_{?},1):\Sm/k\to \SH_{S^1}(k)/f_2
\]
sending $X$ to $(\P^1_X,1)\in\SH_{S^1}(k)/f_2$, which we consider as a $\SH_{S^1}(k)/f_2$-valued presheaf on $\Sm/k^\op$ (we could also write this functor as $X\mapsto \Sigma_{\P^1}X_+$). We proceed to extend $(\P^1_{?},1)$ to a presheaf on $\SmCor(k)^\op$; we will assume that $\Char k=0$, so we do not need to worry about inseparability.

We first define the action on the generators of $\Hom_{\SmCor}(X,Y)$, i.e., on irreducible $W\subset X\times Y$ such that $W\to X$ is finite and surjective over some component of $X$. As $\SH_{S^1}(k)/f_2$ is an additive category, it suffices to consider the case of irreducible $X$.   Let $U\subset X$ be a dense open subscheme. Then the map $(\P^1_U,1)\to (\P^1_X,1)$ induced by the inclusion is an isomorphism in $\SH_{S^1}(k)/f_2$. We may therefore define the morphism
\[
(\P^1_?,1)(W):(\P^1_X,1)\to (\P^1_Y,1)
\]
in $\SH_{S^1}(k)/f_2$ as the composition
\[
(\P^1_X,1)\cong (\P^1_{k(X)},1)\xrightarrow{\co_{k(W)/k(X)}} (\P^1_{k(W)},1)\xrightarrow{p_2}(\P^1_Y,1).
\]
We extend to linearity to define $(\P^1_?,1)$ on $\Hom_{\SmCor}(X,Y)$. 

Suppose that $\Gamma_f\subset X\times Y$ is the graph of a morphism $f:X\to Y$. It follows from lemma~\ref{lem:CoTrId} that 
$\co_{k(\Gamma_f)/k(X)}$ is the inverse to the isomorphism $p_1:(\P^1_{k(\Gamma_f)},1)\to (\P^1_{k(X)},1)$. Thus, the composition
\[
(\P^1_{k(X)},1)\xrightarrow{\co_{k(\Gamma_f)/k(X)}} (\P^1_{k(\Gamma_f)},1)\xrightarrow{p_2}(\P^1_Y,1)
\]
is the map induced by the restriction of $f$ to $\Spec k(X)$. Since $(\P^1_{k(X)},1)\to (\P^1_X,1)$ is an isomorphism in  $\SH_{S^1}(k)/f_2$, it follows that $(\P^1_?,1)(\Gamma_f)=f$, i.e., our definition of  $(\P^1_?,1)$ on $\Hom_{\SmCor}(X,Y)$ really is an extension of its definition on 
$\Hom_{\Sm/k}(X,Y)$.

The main point is to check functoriality.

\begin{lem} For $\alpha\in \Hom_{\SmCor}(X,Y)$, $\beta\in \Hom_{\SmCor}(Y,Z)$, we have
\[
(\P^1_?,1)(\beta\circ\alpha)=(\P^1_?,1)(\beta)\circ(\P^1_?,1)(\alpha)
\]
\end{lem}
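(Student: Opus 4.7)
The plan is as follows. By $\Z$-linearity of both sides we reduce to $\alpha=[W]$, $\beta=[V]$ with $W\subset X\times Y$ and $V\subset Y\times Z$ irreducible and finite surjective over a component of $X$, resp.\ $Y$. Using Lemma~\ref{lem:Birat} together with a limit argument over shrinking dense open $U\subset X$, it suffices to check the equality of the two resulting morphisms $(\P^1_F,1)\to(\P^1_Z,1)$ in $\SH_{S^1}(k)/f_2$, where $F=k(X)$. Since $\Char k=0$, the generic fibre $W\times_X\Spec F$ equals $\Spec K$ for a finite separable extension $K/F$, equipped with a morphism $\iota\colon\Spec K\to Y$ induced by projection.

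Pulling back $V\to Y$ along $\iota$ yields a finite $K$-scheme $V_K=\Spec A$; decompose $A=\prod_j A_j$ into local Artinian factors with residue fields $L_j/K$ and lengths $n_j$. The standard formula for composition of finite correspondences gives
\[
[V]\circ[W]\;=\;\sum_j n_j\,[\Spec L_j],
\]
viewed as a correspondence $\Spec F\to Z$ via $q_j\colon\Spec L_j\hookrightarrow\Spec A_j\to V\to Z$. Applying $(\P^1_?,1)$ and using Lemma~\ref{lem:CoTrComp} to factor $\co_{L_j/F}=\co_{L_j/K}\circ\co_{K/F}$, the desired equality reduces to the base-change identity
\[
(\P^1_?,1)([V])\circ\iota_*\;=\;\sum_j n_j\,(q_j)_*\circ\co_{L_j/K}
\]
in $\Hom_{\SH_{S^1}(k)/f_2}((\P^1_K,1),(\P^1_Z,1))$, where $\iota_*\colon(\P^1_K,1)\to(\P^1_Y,1)$ and $(q_j)_*\colon(\P^1_{L_j},1)\to(\P^1_Z,1)$ denote the morphisms induced by $\iota$ and $q_j$.

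The main obstacle is this base-change identity, since $\iota$ is typically not flat (its image is a proper subvariety of $Y$), so compatibility does not follow formally from Lemma~\ref{lem:FlatPullback}. I would prove it by localizing $Y$ at the image point $y$ of $\iota$: after shrinking $Y$ using Lemma~\ref{lem:Birat} to a Zariski neighborhood of $y$ and passing to the Henselization of $Y$ along a chain of regular codimension-one subvarieties terminating at $y$, repeated application of Proposition~\ref{prop:LocalCyc} converts the co-transfer of $V$ into the appropriate ramified sum at each step, with Remark~\ref{rem:Additive}(1) absorbing the decomposition $V_K=\coprod_j\Spec A_j$ into components and Lemma~\ref{lem:FlatPullback} handling the final flat base change from $k(y)$ to $K$. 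Assembling these steps delivers the ramification multiplicities $n_j$ together with the residue-field maps $q_j$, proving the base-change identity and hence the desired functoriality.
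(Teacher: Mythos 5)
Your proposal follows essentially the same strategy as the paper: reduce to irreducible generators, localize at the generic point $\eta=\Spec F$ of $X$, compute $[V]\circ[W]$ there, peel off $\co_{K/F}$ via Lemma~\ref{lem:CoTrComp}, and reduce to compatibility of the cotransfer of $V$ with the inclusion of a closed point of $Y$, established by descending through a chain of smooth codimension-one subvarieties (the paper's $W_\eta=W_0\subset W_1\subset\ldots\subset W_d=Y_\eta$) and invoking Proposition~\ref{prop:LocalCyc} together with Remark~\ref{rem:Additive}(1) and Lemma~\ref{lem:FlatPullback}. This matches the paper's proof.

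There is, however, one step the paper makes explicit that your sketch elides, and it is load-bearing. Before Proposition~\ref{prop:LocalCyc} can be applied, the paper replaces $W'$ (your $V$) by its normalization $W''$, replaces $Z$ by $W''$, and replaces the correspondence $V$ by the transpose of the graph of the finite projection $W''\to Y$, using the already-established functoriality for morphisms in $\Sm/k$. This is not cosmetic: Proposition~\ref{prop:LocalCyc} is stated for $A\to B$ with $A$ a DVR and $B$ a semi-local \emph{principal ideal ring}, hence regular, while the localization of the raw correspondence $V$ over a DVR in $Y$ need not be regular if $V$ is singular along the relevant fibre. Your phrase ``repeated application of Proposition~\ref{prop:LocalCyc}\ldots at each step'' silently assumes that regularity; without first interposing the normalization, the hypotheses of the proposition are not met. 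With that insertion the rest of your assembly — Lemma~\ref{lem:Birat} to shrink to the generic point, Remark~\ref{rem:Additive}(1) to handle the decomposition into local factors, Lemma~\ref{lem:FlatPullback} for the flat residue-field extension — lines up with the paper.
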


\begin{proof} It suffices to consider the case of irreducible finite correspondences $W\subset X\times Y$, $W'\subset Y\times Z$.  If $W$ is the graph of a flat morphism, the result follows from lemma~\ref{lem:FlatPullback}.

As the action of correspondences is defined at the generic point, we may replace $X$ with $\eta:=\Spec k(X)$. Then $W$ becomes a closed point of $Y_\eta$ and the correspondence $W_\eta:\eta\to Y$ factors as $p_2\circ i_{W_\eta}\circ p_1^t$, where $p_1:W_\eta\to \eta$  $p_2:Y_\eta\to Y$ are the projections.

Let $W'_\eta\subset Y_\eta\times  Z$ be the pull-back of $W'$. As we have already established naturality with respect to pull-back by flat maps, we reduce to showing
\[
(\P^1_?,1)(W'_\eta\circ  i_{W_\eta})=(\P^1_?,1)(W'_\eta)\circ (\P^1_?,1)( i_{W_\eta}).
\]
Since $Y$ is quasi-projective, we can find a sequence of closed subschemes of $Y_\eta$
\[
W_\eta=W_0\subset W_1\subset \ldots\subset W_{d-1}\subset W_d=Y_\eta
\]
such that $W_i$ is smooth of codimension $d-i$ on $Y_\eta$. Using again the fact the $\co$ is defined at the generic point, and that we have already proven functoriality with respect to composition of morphisms, we reduce to the case of $Y=\Spec \sO$ for some DVR $\sO$, and $i_\eta$ the inclusion of the closed point $\eta$ of $Y$. 

Let $W''\to W'$ be the normalization of $W'$. Using functoriality with respect to morphisms in $\Sm/k$ once more, we may replace $Z$ with $W''$ and $W'$ with the transpose of the graph of the projection $W''\to Y$. Changing notation, we may assume that $W'$ is the transpose of the graph of a finite morphism $Z\to Y$. This reduces us to the case considered in proposition~\ref{prop:LocalCyc}; this latter result completes the proof.
\end{proof}

We will collect the results of this section, generalized to higher loops, in theorem~\ref{thm:Main1} of the next section.

\section{Higher loops}
 The results of these last sections carry over immediately to statements about the  $n$-fold smash product $(\P^1,1)^{\wedge n}$ for $n\ge1$.  For clarity and completeness, we list these explicitly in an omnibus theorem.
 
Let $R$ be a semi-local $k$-algebra, smooth and essentially of finite type over $k$, and let  $\bar{x}\subset \P^1_R$ and $f$  be as in section~\ref{sec:cotrans}. For $n\ge1$, define
\[
\co^n_{\bar{x},\bar{f}}:\Sigma^n_{\P^1}\Spec R_+\to\Sigma^n_{\P^1}\bar{x}_+.
\]
be the map $\id_{(\P^1,1)^{\wedge n-1}}(\co_{\bar{x},\bar{f}})$. 

Similarly, let $A$ be a semi-local $k$-algebra, smooth and essentially of finite type over $k$. Let  $B=A[x]$ be a simply generated finite generically \'etale $A$-algebra. For $n\ge1$, define
\[
\co^n_{x}:\Sigma^n_{\P^1}\Spec A_+\to \Sigma^n_{\P^1}\Spec B_+.
\]
be the map $\Sigma^n_{\P^1}(\co_{x})$. 
 
\begin{thm} \label{thm:Main1} 1. For $\bar{x}=0$, $f=s$, we have $\co^n_{\bar{x},\bar{f}}=\id$.\\
\\
2. Let  $R\to R'$ be a flat extension of smooth semi-local  $k$-algebras, essentially of finite type over $k$.  Let $\bar{x}$ be a smooth  closed subscheme of $\P^1_R\setminus\{1\}$, finite and generically \'etale over $R$. Let $\bar{x}'=\bar{x}\times_RR'\subset \P^1_{R'}$. Let $\bar{f}$ be a generator for the $m_{\bar{x}}/m^2_{\bar{x}}$, and let $\bar{f}'$ be the extension to $m_{\bar{x}'}/m^2_{\bar{x}'}$. Then the diagram 
\[
\xymatrixcolsep{40pt}
\xymatrix{
\Sigma^n_{\P^1}\Spec R'_+\ar[r]^-{\co^n_{\bar{x}',\bar{f}'}}\ar[d]&\Sigma^n_{\P^1}\bar{x}'_+\ar[d]\\
\Sigma^n_{\P^1}\Spec R_+\ar[r]_-{\co^n_{\bar{x},\bar{f}}}&\Sigma^n_{\P^1}\bar{x}_+
}
\]
commutes.\\
\\
3. The co-group structure $\Sigma^{n-1}_{\P^1}(\sigma_{\P^1})$ on $(\P^1,1)^{\wedge n}$ is given by the map 
\[
\co^n_{\{0,\infty\},\overline{s/(s-1)^2}}:(\P^1,1)^{\wedge n}\to(\P^1,1)^{\wedge n}\vee (\P^1,1)^{\wedge n}.
\]
4. The co-group $((\P^1,1)^{\wedge n},\Sigma^{n-1}_{\P^1}(\sigma_{\P^1}))$ in $\SH_{S^1}(k)/f_{n+1}$ is co-commutative.\\
\\
5. For an extension $A\to B$ as above, the map $\co^n_{x}:\Sigma^n_{\P^1}\Spec A_+\to\Sigma^n_{\P^1}\Spec B_+$ is independent of the choice of $x$, and is denoted $\co^n_{B/A}$.\\
\\
6. Suppose that $\Char k=0$. The $\SH_{S^1}(k)/f_{n+1}$-valued presheaf on $\Sm/k^\op$
\[
\Sigma^n_{\P^1}?_+:\Sm/k\to \SH_{S^1}(k)/f_{n+1}
\]
extends  to an  $\SH_{S^1}(k)/f_{n+1}$-valued presheaf on $\SmCor(k)^\op$, by sending a generator $W\subset X\times Y$ of $\Hom_{\SmCor}(X,Y)$ to the morphism $\Sigma^n_{\P^1}X_+\to\Sigma^n_{\P^1}Y_+$ in $\SH_{S^1}(k)/f_{n+1}$ determined by the diagram
\[
\xymatrix{
\Sigma^n_{\P^1} \Spec k(X)_+\ar[r]^-\sim\ar[d]_{\co^n_{k(W)/k(X)}}&\Sigma^n_{\P^1}X_+\\
\Sigma^n_{\P^1}\Spec k(W)_+\ar[d]_{p_2}\\
\Sigma^n_{\P^1} Y_+
}
\]
in $\SH_{S^1}(k)/f_{n+1}$. The assertion that 
\[
\Sigma^n_{\P^1} \Spec k(X)_+\to\Sigma^n_{\P^1}X_+
\]
is an isomorphism in $\SH_{S^1}(k)/f_{n+1}$ is part of the statement. We write the map in $\SH_{S^1}(k)/f_{n+1}$ associated to $\alpha\in \Hom_{\SmCor}(X,Y)$ as
\[
\co^n(\alpha):\Sigma^n_{\P^1}X_+\to \Sigma^n_{\P^1}Y_+.
\]
\end{thm}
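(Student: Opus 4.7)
The strategy is to reduce everything to the $n=1$ case handled in sections~\ref{sec:cotrans}--\ref{sec:cotrans} (and the unnumbered sections on co-group structure and slice localizations) by applying the $(n-1)$-fold $\P^1$-suspension functor $\Sigma_{\P^1}^{n-1}$. The key compatibility is that $\Sigma_{\P^1}^{n-1}$ is exact and carries $\Sigma_{\P^1}^2\SH_{S^1}(k)$ into $\Sigma_{\P^1}^{n+1}\SH_{S^1}(k)$, so it descends to a functor
\[
\Sigma_{\P^1}^{n-1}\colon \SH_{S^1}(k)/f_2 \longrightarrow \SH_{S^1}(k)/f_{n+1},
\]
and by construction $\co^n_{\bar{x},\bar{f}} = \Sigma_{\P^1}^{n-1}(\co_{\bar{x},\bar{f}})$ (and similarly for $\co^n_x$). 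Assertions (1)--(5) are then obtained by applying $\Sigma_{\P^1}^{n-1}$ to the corresponding statements already proven: (1) is lemma~\ref{lem:cotrid}; (2) is lemma~\ref{lem:FlatPullback}; (3) is lemma~\ref{lem:ComultIdent} together with the identification $\Sigma_{\P^1}^{n-1}\sigma_{\P^1} = \Sigma_{\P^1}^{n-1}(\co_{\{0,\infty\},\overline{s/(s-1)^2}})$; (4) follows from the co-commutativity of $(\P^1,1)$ in $\SH_{S^1}(k)/f_2$ proved at the end of section~\ref{sec:cotrans}; and (5) is lemma~\ref{lem:Independence}.

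For (6), the first order of business is to verify the parenthetical claim that for irreducible $X \in \Sm/k$ the map
\[
\Sigma^n_{\P^1}\Spec k(X)_+ \longrightarrow \Sigma^n_{\P^1}X_+
\]
is an isomorphism in $\SH_{S^1}(k)/f_{n+1}$. This follows from lemma~\ref{lem:Birat} applied to a filtering system of dense open immersions $V \hookrightarrow X$ with $X \setminus V$ of codimension $\ge 1$, together with a limit argument using that $\Sigma_{\P^1}^n?_+$ commutes with filtered colimits of open immersions up to weak equivalence, and that $\Sigma^{n+1}_{\P^1}\SH_{S^1}(k)$ is a localizing subcategory.

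Granting this, define $\co^n(W)$ on generators $W \subset X\times Y$ of $\Hom_{\SmCor}(X,Y)$ by the formula stated. That this agrees with $\Sigma_{\P^1}^n f_+$ on graphs of morphisms $f\colon X \to Y$ is immediate from lemma~\ref{lem:CoTrId} applied to the flat (in fact isomorphism) projection $p_1\colon \Gamma_f \to X$. The substance of the proof is functoriality $\co^n(\beta \circ \alpha) = \co^n(\beta) \circ \co^n(\alpha)$. This is precisely the verification carried out at the end of section~\ref{sec:cotrans} for $n=1$: after reducing to the generic point of $X$, one uses compatibility with flat pullback (lemma~\ref{lem:FlatPullback}) to cut down $Y$ by a flag of smooth closed subschemes to the case where $Y$ is a DVR and $W \subset Y$ is the closed point; a further reduction replaces the second factor by the normalization, leaving exactly the situation of proposition~\ref{prop:LocalCyc}. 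Applying $\Sigma_{\P^1}^{n-1}$ throughout shows that proposition~\ref{prop:LocalCyc}, and hence the functoriality argument, goes through verbatim in $\SH_{S^1}(k)/f_{n+1}$.

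The main obstacle, and the one spot where something genuinely must be checked rather than formally suspended, is the parenthetical birationality assertion in (6): one needs that passing from $X$ to $\Spec k(X)$ is an isomorphism after $\Sigma^n_{\P^1}$-suspension modulo $\Sigma^{n+1}_{\P^1}\SH_{S^1}(k)$. Given this, the ramification identity of proposition~\ref{prop:LocalCyc} is what makes composition of correspondences respect intersection multiplicities (via lemma~\ref{lem:Mult}, whose hypothesis on $\Char k$ disappears once $\Char k = 0$ is assumed), and everything else is formal exactness of $\Sigma_{\P^1}^{n-1}$.
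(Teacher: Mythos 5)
Your proof is correct and follows essentially the same route as the paper: the paper offers no explicit proof, simply asserting that ``the results of these last sections carry over immediately,'' and you have filled in the mechanism by which this happens, namely that $\Sigma_{\P^1}^{n-1}$ is exact, sends $\Sigma_{\P^1}^{2}\SH_{S^1}(k)$ into $\Sigma_{\P^1}^{n+1}\SH_{S^1}(k)$, and hence descends to the localizations. Two small navigational slips: the co-commutativity proposition and the functoriality lemma you invoke are proved in the (unnumbered) section ``Slice localizations and co-transfer,'' not at the end of section~\ref{sec:cotrans}, but this does not affect the mathematics.
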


\section{Supports and co-transfers}\label{sec:Supp}

In this section, we assume that $\Char k=0$. We consider the following situation. Let $i:Y\to X$ be a codimension one closed immersion in $\Sm/k$, and let $Z\subset X$ be a pure codimension $n$ closed subset of $X$ such that $i^{-1}(Z)\subset Y$ also has pure codimension one. We let $T=i^{-1}(Z)$, $X^{(Z)}:=(X,X\setminus Z)$, $Y^T=(Y,Y\setminus T)$, so that $i$ induces the map of pointed spaces
\[
i:X^{(Z)}\to Y^{(T)}
\]
Let $z$ be the set of generic points of $Z$, $\sO_{X,z}$ the semi-local ring of $z$ in $X$, $X_z=\Spec\sO_{X,z}$ and $X_z^{(z)}=(X_z, X_z\setminus z)$. We let $t$ be the set of generic points of $T$, and let $\sO_{X,t}$ be the semi-local ring of $t$ in $X$, $X_t=\Spec\sO_{X,t}$. Set $Y_t:=X_t\times_XY$ and let  $Y^{(t)}_t=(Y_t, Y_t\setminus t)$. 

\begin{lem}\label{lem:ExcisionIso}  There are canonical isomorphisms in $\SH_{S^1}(k)/f_{n+1}$
\[
X^{(Z)}\cong X_z^{(z)}\cong  \Sigma_{\P^1}^n z_+;\quad Y^{(T)}\cong Y^{(t)}_t\cong \Sigma_{\P^1}^n t_+.
\]
\end{lem}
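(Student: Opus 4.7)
The plan is to recognize both chains of isomorphisms as immediate applications of lemma~\ref{lem:CanonThomIso}, one for the pair $(X,Z)$ and one for the pair $(Y,T)$, with the intermediate ``semi-local'' stages being extracted from the proof of that lemma rather than stated as separate input.

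For the first chain, since $Z$ is closed of pure codimension $n$ in $X$ with set of generic points $z=\{z_1,\ldots,z_m\}$, I would apply lemma~\ref{lem:CanonThomIso} with $r=n$ to obtain a canonical isomorphism
\[
X^{(Z)} \;\cong\; \oplus_{i=1}^m \Sigma_{\P^1}^n z_{i+} \;=\; \Sigma_{\P^1}^n z_+
\]
in $\SH_{S^1}(k)/f_{n+1}$. The intermediate identification $X^{(Z)}\cong X_z^{(z)}$ is already contained in that lemma's proof: write $X_z$ as the cofiltered limit of open subschemes $U_\alpha\subset X$ obtained by successively removing smooth closed subsets $C_\alpha\subset Z$ of codimension $\geq n+1$ in $U_\alpha$, and use the Morel--Voevodsky purity theorem to identify each cofiber of $U_{\alpha+1}\hookrightarrow U_\alpha$ with a Thom space lying in $\Sigma_{\P^1}^{n+1}\SH_{S^1}(k)$, hence trivial in $\SH_{S^1}(k)/f_{n+1}$. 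This gives $(X_z, X_z\setminus z) \cong (X, X\setminus Z)$ in the localization.

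For the second chain I would apply exactly the same argument verbatim to the pair $(Y,T)$: under the hypothesis that $T\subset Y$ is pure of codimension $n$ in $Y$ with set of generic points $t$, lemma~\ref{lem:CanonThomIso} produces the corresponding canonical chain
\[
Y^{(T)} \;\cong\; Y_t^{(t)} \;\cong\; \Sigma_{\P^1}^n t_+
\]
in $\SH_{S^1}(k)/f_{n+1}$.

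I do not expect any real obstacle at this step; the substantive work has all been carried out in lemma~\ref{lem:CanonThomIso}, from which both chains inherit their canonicity, and in particular their independence of any trivialization of the conormal directions along $z$ or $t$. The only bookkeeping point worth flagging is that the closed immersion $i:Y\to X$ does not enter the statement of either chain in isolation, so no compatibility between the two chains needs to be verified here; that comparison is presumably the object of the next result, where the map $i:X^{(Z)}\to Y^{(T)}$ will be matched up, under these identifications, with a co-transfer--type map $\Sigma_{\P^1}^n z_+ \to \Sigma_{\P^1}^n t_+$ built from the local data of $Y\subset X$ along $Z$.
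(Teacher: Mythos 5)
Your proposal is correct and matches the paper, whose entire proof is the single line ``This follows from lemma~\ref{lem:CanonThomIso}.'' One small imprecision worth noting: removing only closed subsets $C_\alpha\subset Z$ produces the pro-scheme $V=X\setminus(Z\setminus z)$ from the proof of lemma~\ref{lem:CanonThomIso}, not $X_z$ itself; to pass from $V$ to the full semi-localization $X_z$ one also removes closed subsets disjoint from $z$, but those removals leave the pair $(-,-\setminus z)$ unchanged by ordinary Zariski excision, so the chain of isomorphisms is unaffected.
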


\begin{proof} This follows from lemma~\ref{lem:CanonThomIso}.
 \end{proof}
 
 Thus, the inclusion $i$ gives us the map in  $\SH_{S^1}(k)/f_{n+1}$:
 \[
 i:\Sigma_{\P^1}^n t_+\to \Sigma_{\P^1}^n z_+.
 \]
 On the other hand, we can define a map
 \[
  i_\co:\Sigma_{\P^1}^n t_+\to \Sigma_{\P^1}^n z_+
 \]
 as follows: Let $Z_t=Z\cap X_t\subset X_t$. Since $Y$ has codimension one in $X$ and intersects $Z$ properly, $t$ is a collection of codimenison one points of $Z$, and thus $Z_t$ is a semi-local reduced scheme of dimension one. Let $p:\tilde{Z}_t\to Z_t$ be the normalization, and let $\tilde{t}\subset \tilde{Z}_t$ be the set of points lying over $t\subset Z_t$. Write $\tilde{t}=\cup_j\tilde{t}_j$. For each $j$, we let $n_j$ denote the multiplicity at $\tilde{t}_j$ of the pull-back Cartier divisor $Y_t\times_{X_t}\tilde{Z}_t$, and let $t_j=p(\tilde{t}_j)$. This gives us the diagram
 \[
 \xymatrix{
 \tilde{t}\ar[r]^{\tilde{i}}\ar[d]_p& \tilde{Z}_t\ar[d]_p&z\ar[l]_j\\
 t\ar[r]_i&Z.
 }
 \]
Note that $j$ is an isomorphism in $\SH_{S^1}(k)/f_1$.  We define $i_\co$ to be the composition  
 \[
\Sigma_{\P^1}^nt_+\xrightarrow{\sum_j n_j\co_{\tilde{t}_j/t}^n}
\Sigma_{\P^1}^n\tilde{t}_+\xrightarrow{\Sigma_{\P^1}^n\tilde{i}}\Sigma_{\P^1}^n \tilde{Z}_+\xrightarrow{\Sigma_{\P^1}^n j^{-1}}\Sigma_{\P^1}^n z_+
 \]
  in $\SH_{S^1}(k)/f_{n+1}$.
  
\begin{lem}\label{lem:intersMult} $i=i_\co$ in  $\SH_{S^1}(k)/f_{n+1}$.
\end{lem}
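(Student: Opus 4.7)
The plan is to reduce Lemma~\ref{lem:intersMult} to the one-dimensional case of Proposition~\ref{prop:LocalCyc} via a three-step process: excision to the semi-local picture, resolution of the singularities of $Z_t$, and $\Sigma^{n-1}_{\P^1}$-suspension of the DVR calculation.

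First, I would apply Lemma~\ref{lem:ExcisionIso} to replace $(X,Y,Z)$ by the semi-local triple $(X_t,Y_t,Z_t)$, so that both $i$ and $i_\co$ become morphisms $\Sigma^n_{\P^1} t_+ \to \Sigma^n_{\P^1} z_+$ in $\SH_{S^1}(k)/f_{n+1}$. Since both sides of $i=i_\co$ decompose additively over the irreducible components of $Z$ and of $T$, one reduces to the case that $Z_t$ is irreducible with generic point $z$. By Lemma~\ref{lem:Birat}, the inclusion of the common generic point $j: z \hookrightarrow \tilde{Z}_t$ induces an isomorphism $\Sigma^n_{\P^1} z_+ \xrightarrow{\sim} \Sigma^n_{\P^1} \tilde{Z}_+$ in $\SH_{S^1}(k)/f_{n+1}$. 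Thus it is equivalent to show that $j\circ i$ and $\tilde{i}\circ\bigl(\sum_j n_j\, \co^n_{\tilde{t}_j/t}\bigr)$ agree as morphisms $\Sigma^n_{\P^1} t_+ \to \Sigma^n_{\P^1} \tilde{Z}_+$.

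Next, I would use Hironaka's resolution of singularities (available since $\Char k = 0$) to construct a proper birational $\pi: \tilde{X} \to X_t$ whose strict transform $\tilde{Z}$ of $Z_t$ is smooth of codimension $n$ in smooth $\tilde{X}$, is canonically identified with the normalization $\tilde{Z}_t$, and such that $\pi^{-1}(Y_t)\cup\tilde{Z}$ is a simple normal crossings divisor. Since the exceptional locus of $\pi$ is contained in the singular locus of $Z_t$, which has codimension $\ge n+1$ in $X_t$, a codimension argument built from Lemmas~\ref{lem:Birat} and \ref{lem:CanonThomIso} shows that the maps induced by $\pi$ on the pairs $(\tilde{X}, \tilde{X}\setminus\tilde{Z}) \to (X_t, X_t\setminus Z_t)$ and $(\pi^{-1}(Y_t), \pi^{-1}(Y_t)\setminus\tilde{T})\to (Y_t, Y_t\setminus T)$ become isomorphisms in $\SH_{S^1}(k)/f_{n+1}$. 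This transports the comparison of $j\circ i$ and $\tilde{i}\circ\bigl(\sum_j n_j\, \co^n_{\tilde{t}_j/t}\bigr)$ to the resolved, smooth setting.

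In that setting, near each $\tilde{t}_{j,\ell}$ I would choose a regular system of parameters $(u_1,\ldots,u_n,v)$ on $\tilde{X}$ so that $\tilde{Z}$ is the smooth curve $u_1=\cdots=u_n=0$ and (up to a unit) the equation of $\pi^{-1}(Y_t)$ restricts along $\tilde{Z}$ to $v^{n_{j,\ell}}$. Morel--Voevodsky purity then identifies the local contribution to $i$ with the $\Sigma_{\P^1}^{n-1}$-suspension of the one-dimensional inclusion of the closed point $\tilde{t}_{j,\ell}$ into the DVR $\sO_{\tilde{Z},t_j}$ composed with the finite cover by $\sO_{\tilde{Z},\tilde{t}_{j,\ell}}$. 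Applying the $\Sigma_{\P^1}^{n-1}$-suspension of Proposition~\ref{prop:LocalCyc} (using Theorem~\ref{thm:Main1}(3),(6)), and summing over $\ell$ via Remark~\ref{rem:Additive}, yields exactly the weighted cotransfer formula for $i_\co$. The hardest part of the argument will be the codimension estimate in the resolution step: because $\pi$ is not flat, Theorem~\ref{thm:Main1}(2) does not apply directly, and one must check by hand that the exceptional contribution lies in the localizing subcategory $\Sigma^{n+1}_{\P^1}\SH_{S^1}(k)$, hence vanishes in $\SH_{S^1}(k)/f_{n+1}$.
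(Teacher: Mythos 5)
Your approach is genuinely different from the paper's, and unfortunately it has a gap at the resolution step. The paper does not use Hironaka's theorem at all; instead it uses Gabber's presentation lemma to normalize the ambient scheme to $\A^{n+1}_F$ with $Y$ a coordinate hyperplane, then deforms $Z$ linearly via $m(t,x_1,\ldots,x_{n+1})=(t,tx_1,\ldots,tx_{n-1},x_n,x_{n+1})$, shrinking $Z$ into the coordinate plane $\A^2_F$. Once $Z\subset\A^2_F$, the inclusion $Y^{(T)}\to X^{(Z)}$ is literally an $(n-1)$-fold $\P^1$-suspension of the one-dimensional picture, and that picture is handled by an $\A^1$-homotopy in the base direction moving the special fiber to the generic fiber, after which Proposition~\ref{prop:LocalCyc} applies. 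The deformation trick is what buys the reduction to $n=1$; no proper birational modification of $X$ is ever introduced.

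The concrete gap in your argument is the assertion that $\pi$ induces a map of pairs $(\tilde{X},\tilde{X}\setminus\tilde{Z})\to(X_t,X_t\setminus Z_t)$ that is an isomorphism in $\SH_{S^1}(k)/f_{n+1}$. When $n\ge 2$ this map is not even well defined: a point of the exceptional divisor $E$ lying off the strict transform $\tilde{Z}$ has image in $Z_t$, so $\pi(\tilde{X}\setminus\tilde{Z})\not\subset X_t\setminus Z_t$. The only well-defined map of pairs is $(\tilde{X},\tilde{X}\setminus\pi^{-1}(Z_t))\to(X_t,X_t\setminus Z_t)$, and there $\pi^{-1}(Z_t)=\tilde{Z}\cup E$ contains the exceptional divisor $E$ as a \emph{codimension-one} component of $\tilde{X}$. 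Your codimension estimate ``$\ge n+1$'' is correct for the \emph{image} $\pi(E)\subset X_t$ (the center of the blow-up), not for $E$ itself, which is precisely the locus your excision technology would have to kill. Neither Lemma~\ref{lem:Birat} nor Lemma~\ref{lem:CanonThomIso} collapses a codimension-one support in $\SH_{S^1}(k)/f_{n+1}$, so the claimed identification does not follow; indeed the paper only ever replaces $(X,Z)$ by $(X',Z')$ along maps that are Hensel neighborhoods (\'etale on a suitable open), which is exactly why proper birational modifications like blow-ups are avoided. To make your route work you would need a proper push-forward or projection-formula statement in $\SH_{S^1}(k)/f_{n+1}$ to control the exceptional contribution, which the paper does not develop and which is not an easy patch. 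The same problem affects the claimed comparison on the $Y_t$ side, where $\pi^{-1}(Y_t)$ is generally reducible (strict transform plus exceptional components with multiplicities), so the pair $(\pi^{-1}(Y_t),\pi^{-1}(Y_t)\setminus\tilde{T})$ does not carry the multiplicity data you need.
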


\begin{proof} Using Nisnevich excision, we may replace $X$ with the Henselization of $X$ along $t$; we may also assume that $t$ is a single point. Via a limit argument, we may then replace $X$ with a smooth affine scheme of dimension $n+1$ over $k(t)$. Thus we may take $Z$ to be a reduced closed subscheme of $X$ of pure dimension one over $k(t)$.  We may also assume that $Y$ is the fiber over $0$ of a morphism $X\to\A^1_{k(t)}$ for which the restriction to $Z$ is finite.

As we are working in $\SH_{S^1}(k)/f_{n+1}$, we may replace $(X,Z)$ with $(X',Z')$ if there is a morphism $f:X\to X'$ which makes $(X,t)$ a Hensel neighborhood of $(X', f(t))$ and such that the restriction of $f$ to $Z'$ is birational.   Using Gabber's presentation lemma \cite[lemma 3.1]{Gabber}, we may assume that $X= \A^{n+1}_{k(t)}$, that $t=0$ and that $Y$ is the coordinate hyperplane $X_{n+1}=0$. We write $F$ for $k(t)$ and changing notation write simply 0 instead of $t$.

After a suitable linear change of coordinates in $\A^{n+1}_F$, we may assume that each coordinate projection 
\begin{align*}
&q:\A^{n+1}_F \to \A^r_F\\
&q(x_1,\ldots, x_{n+1})=(x_{i_1},\ldots, x_{i_r}),
\end{align*}
$r=1,\ldots, n$,  restricts to a finite morphism on  $Z$, and that $Z\to q(Z)$ is birational if $r\ge2$. 

We now reduce to the case in which $Z$ is contained in the coordinate subspace $X'=\A^2_F$ defined by $X_1=\ldots=X_{n-1}=0$. For this,  consider the map
\begin{align*}
m:\A^1\times\A^{n+1}_F&\to \A^1\times\A^{n+1}_F\\
m(t,x_1,\ldots, x_{n+1})&=(t, tx_1,\ldots, tx_{n-1},x_n, x_{n+1})
\end{align*}
Let $\sZ=m(\A^1\times Z)\subset \A^1\times\A^{n+1}_F$. By our finiteness assumptions, $\sZ$ is a (reduced) closed subscheme of $\A^1\times\A^{n+1}_F$, and each fiber $\sZ_t\subset t\times\A^{n+1}_F$ is birationally isomorphic to $Z\times_FF(t)$. Consider the inclusion map 
\[
(\A^1\times Y)^{(\A^1\times0)}\to (\A^1\times X)^{(\sZ)}
\]
The  maps
\[
i_0, i_1:Y^{(0)}\to (\A^1\times Y)^{(\A^1\times0)}
\]
are clearly isomorphisms in $\sH_\bullet(k)$, and the maps
\begin{align*}
&i_1:X^{(Z)}\to (\A^1\times X)^{(\sZ)}\\
&i_0:X^{(\sZ_0)}\to (\A^1\times X)^{(\sZ)}
\end{align*}
 are easily seen to be isomorphisms in $\SH_{S^1}(k)/f_{n+1}$. Combining this with the commutative diagram
\[
\xymatrix{
Y^{(0)}\ar[r]\ar[d]_{i_1}&X^{(Z)}\ar[d]^{i_1}\\
 (\A^1\times Y)^{(\A^1\times0)}\ar[r]&X^{(\sZ)}\\
Y^{(0)}\ar[r]\ar[u]^{i_0}&X^{(\sZ_0)}\ar[u]_{i_0}
}
\]
shows that we can replace $Z$ with $\sZ_0\subset X'$.

Having done this, we see that the map $Y^{(0)}\to X^{(Z)}$ is just the $n-1$-fold $\P^1$ suspension  of the map
\[
(Y\cap X')^{(0)}\to (X')^{(Z)}
\]
This reduces us to the case $n=1$.

Since $p_2:Z\to \A^1_F$ is finite, we may replace $\A^1\times\A^1_F$ with $\P^1\times\A^1_F$. Then the map $Y^{(0)}\to X^{(Z)}$ is isomorphic to $(\P^1\times0,\infty\times0)\to X^{(Z)}$. We extend this to the isomorphic map
\[
(\P^1\times\A^1_F,\infty\times\A^1_F)\to X^{(Z)}=(\P^1\times\A^1_F,\P^1\times\A^1_F\setminus Z).
\]
Let $s$ be the generic point of $\A^1_F$, $Z_s$ the fiber of $p_2$ over $s$. Then the inclusions
\begin{align*}
(\P^1\times0,\infty\times0)\xrightarrow{j_0}&(\P^1\times\A^1_F,\infty\times\A^1_F)\xleftarrow{j_s}(\P^1\times s,\infty\times s)\\
&(\P^1\times\A^1_F,\P^1\times\A^1_F\setminus Z)\xleftarrow{j_s}(\P^1\times s,\P^1_s\setminus Z_s)
\end{align*}
are isomorphisms in $\SH_{S^1}(k)/f_2$, and thus the map
\[
i_0:Y^{(0)}\cong (\P^1\times0,\infty\times0)\to X^{(Z)}=(\P^1\times\A^1_F,\P^1\times\A^1_F\setminus Z)
\]
is isomorphic in $\SH_{S^1}(k)/f_2$ to the collapse map
\[
(\P^1\times s,\infty\times s)\to (\P^1\times s,\P^1_s\setminus Z_s).
\]
Therefore, the map 
\[
 i:\Sigma_{\P^1} 0_+\to \Sigma_{\P^1} z_+
 \]
we need to consider is equal to the co-transfer map
\[
\co_{Z_s/s}:\Sigma_{\P^1} s_+\to \Sigma_{\P^1}z_{s+}
\]
composed with the (canonical) isomorphisms
\[
\Sigma_{\P^1}0_+\xrightarrow{i_0}\Sigma_{\P^1} s_+;\quad \Sigma_{\P^1}z_{s+}\cong \Sigma_{\P^1}z_+,
\]
the latter isomorphism arising by noting that $z_s$ is a generic point of $Z$ over $F$. The result now follows directly from proposition~\ref{prop:LocalCyc}.
\end{proof}

\begin{Def} \label{Def:CorSupp} 1. Take $X, X'\in\Sm/k$, and let  $Z\subset X$,   $Z'\subset X'$ be pure codimension $n$ closed subsets.  Take a generator $A\in\Hom_{\SmCor}(X,X')$,  $A\subset X\times X'$. Let $q:A^N\to A$ be the normalization of $A$. Let $z$ be the set of generic points of $Z$, let $a$ be the set of generic points of $A\cap X\times Z'$ and let $a'=q^{-1}(a)$.  Suppose that 
\begin{enumerate}
\item $A^N\to X$ is \'etale on a neighborhood of $a'$
\item $p_X(a)$ is contained in $Z$.  
\end{enumerate}
Let $\sO_{A^N,a}$ be the semi-local ring of $a'$ in $A^N$, and let $A^N_{a'}=\Spec \sO_{A^N,a'}$; define $X_z$ similarly. Define
\[
\co^n(W): X^{(Z)}\to X^{\prime(Z')}
\]
to be the map in $\SH_{S^1}(k)/f_{n+1}$ given by the following composition:
\[
X^{(Z)}\cong  X_z^{(z)}\cong  \Sigma^n_{\P^1}z_+\xrightarrow{\co^n_{a'/z}}
 \Sigma^n_{\P^1}a'_+ \cong A_{a'}^{N(a')}\xrightarrow{p_{X'}}X^{\prime(Z')}.
\]
2. Let $\Hom_{\SmCor}(X,X')_{Z, Z'}\subset \Hom_{\SmCor}(X,X')$ be the subgroup generated by $A$ satisfying (a) and (b). We extend the definition of the morphism $\co^n(A)$ to $\Hom_{\SmCor}(X,X')_{Z, Z'}$ by linearity.
\end{Def}
Note that we implicity invoke lemma~\ref{lem:ExcisionIso} to ensure that the isomorphisms used in the definition of $\co^n(A)$ exist and are canonical; condition (1) implies in particular that $A$ is smooth in a neighborhood of $a$, so we may use lemma~\ref{lem:ExcisionIso} for the isomorphism $\Sigma^n_{\P^1}a_+ \cong A_a^{(a)}$.

\begin{lem}\label{lem:CorSuppFunct}  Take $X, X', X''\in\Sm/k$, and let  $Z\subset X$,   $Z'\subset X'$ and $Z''\subset X''$be a pure codimension $n$ closed subsets. Take 
$\alpha\in \Hom_{\SmCor}(X,X')_{Z, Z'}$, $\alpha'\in \Hom_{\SmCor}(X',X'')_{Z', Z''}$. Then $\alpha'\circ\alpha$ is in $ \Hom_{\SmCor}(X,X'')_{Z, Z''}$ and
\[
\co^n(\alpha')\circ\co^n(\alpha)=\co^n(\alpha'\circ\alpha).
\]
\end{lem}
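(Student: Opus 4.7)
The approach extends the unsupported functoriality proof from the preceding section (the lemma just before theorem~\ref{thm:Main1}) to track the codimension-$n$ supports. By bilinearity, reduce immediately to irreducible generators $A\subset X\times X'$ and $A'\subset X'\times X''$ satisfying (a), (b) with respect to $(Z,Z')$ and $(Z',Z'')$ respectively.

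\textbf{Membership.} Write $\alpha'\circ\alpha = p_{13,*}((A\times X'')\cdot_{X\times X'\times X''}(X\times A'))$ as a cycle. The generic points of $(\alpha'\circ\alpha)\cap(X\times Z'')$ lift to generic points of the intersection cycle, which correspond to generic points of the fiber product $A^N\times_{X'}A^{\prime N}$ lying over $X\times Z''$. Conditions (a), (b) for $A$ at the generic points of $A\cap(X\times Z')$ and for $A'$ at the generic points of $A'\cap(X'\times Z'')$ propagate through the projections: the components of $\alpha'\circ\alpha$ obtained have normalizations étale over $X$ near the preimages of $Z''$, and these preimages map into $Z$.

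\textbf{The equality in $\SH_{S^1}(k)/f_{n+1}$.} Using lemma~\ref{lem:ExcisionIso}, replace $X^{(Z)}$, $X^{\prime(Z')}$, $X^{\prime\prime(Z'')}$ by their semi-local models $\Sigma^n_{\P^1}z_+$, $\Sigma^n_{\P^1}z'_+$, $\Sigma^n_{\P^1}z''_+$. Then $\co^n(\alpha)$ becomes the co-transfer associated to the finite generically étale extension of $\sO_{X,z}$ by the semi-localization of $A^N$ at $a'$, and analogously for $\co^n(\alpha')$. By remark~\ref{rem:Additive} we may split everything into simply generated factors; the composition of these co-transfers is handled by lemma~\ref{lem:CoTrComp}, applied to the tower $\sO_{X,z}\subset B\subset C$ where $B$ is the semi-localization of $A^N$ and $C$ that of the fiber product $A^N\times_{X'} A^{\prime N}$. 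Compatibility of $\co^n(\alpha')$ with the base change from $X'$ to $A^N$ needed to interpret this fiber product is supplied by theorem~\ref{thm:Main1}(2).

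\textbf{Multiplicities and main obstacle.} The essential subtlety is that the cycle $\alpha'\circ\alpha$ carries Serre intersection multiplicities coming from non-transversal meeting of $A\times X''$ and $X\times A'$, and these multiplicities must match the weights that appear naturally in our co-transfer composition. These multiplicities are concentrated in codimension one along the pullback of $X\times Z''$ to $X\times X'$, which is precisely the geometric setting of lemma~\ref{lem:intersMult}: that lemma identifies a codimension-one inclusion map in $\SH_{S^1}(k)/f_{n+1}$ with the weighted sum of normalization co-transfers, the weights being exactly the multiplicities of the pulled back Cartier divisor. Matching these weights with the Serre intersection multiplicities of $\alpha'\circ\alpha$ is the main technical obstacle; the cleanest route is, as in the proof of lemma~\ref{lem:intersMult}, a Gabber-style presentation reducing the codimension-one slice locally to a principal divisor in a smooth total space, so that the multiplicity comparison is subsumed in proposition~\ref{prop:LocalCyc}. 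Once this is in hand, combining lemma~\ref{lem:intersMult}, lemma~\ref{lem:CoTrComp}, and remark~\ref{rem:Additive} assembles the pieces into the desired equality $\co^n(\alpha')\circ\co^n(\alpha)=\co^n(\alpha'\circ\alpha)$.
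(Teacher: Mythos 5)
Your proposal takes a genuinely different (and less efficient) route than the paper's proof, and along the way it introduces difficulties that do not actually arise. The paper's argument is short: after reducing to irreducible generators $A,A'$, one replaces $X$, $X'$, $X''$ by their strict Henselizations along $z$, $z'$, $z''$ (this is legitimate since, by lemma~\ref{lem:ExcisionIso}, $\co^n$ only depends on the local behaviour near these generic points). Condition~(a) of definition~\ref{Def:CorSupp} says $A^N\to X$ is \'etale near $a'$; over a strictly Henselian base a finite \'etale scheme is totally split, so $A$ and $A'$ decompose near the relevant points into disjoint unions of graphs of morphisms $f_{jk}$, $g_{ij}$. Then $A'\circ A$ is literally a sum of graphs $\Gamma_{g_{ij}\circ f_{jk}}$ with multiplicity one, so the support of the composition is smooth near the preimages of $z''$, conditions~(a),(b) for $A'\circ A$ hold trivially, and the equality $\co^n(\alpha')\circ\co^n(\alpha)=\co^n(\alpha'\circ\alpha)$ follows immediately from theorem~\ref{thm:Main1}(6), which already encodes functoriality of $\co^n$ under composition of finite correspondences.

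Two concrete gaps in your proposal. First, the ``membership'' paragraph never actually establishes condition~(a) for $\alpha'\circ\alpha$: you say that \'etaleness ``propagates through the projections,'' but without the Henselization step the fiber product $A^N\times_{X'}A^{\prime N}$ has no reason to map birationally onto the support of $\alpha'\circ\alpha$ near the preimage of $Z''$, nor is it obvious that the cycle-theoretic composition has all the right components smooth there. The strict Henselization is exactly what lets one trade the cycle-theoretic composition for a purely set-theoretic one. Second, your entire final paragraph is addressing a non-problem: since after Henselization $A$ and $A'$ are unions of graphs of morphisms over the relevant points, the intersection $(A\times X'')\cap(X\times A')$ is transverse there and all Serre multiplicities are $1$. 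Lemma~\ref{lem:intersMult} is about a different kind of multiplicity (Cartier-divisor multiplicities in a codimension-one degeneration) and invoking a ``Gabber-style presentation'' here is both unnecessary and unjustified as written. In effect you are trying to re-derive the content of theorem~\ref{thm:Main1}(6) from the local pieces (\ref{lem:CoTrComp}, \ref{prop:LocalCyc}, remark~\ref{rem:Additive}) when the paper simply cites that theorem; if you do want to re-derive it, you would need to reproduce the reduction to DVR local rings and the induction on codimension used to prove the functoriality lemma preceding theorem~\ref{thm:Main1}, and your sketch does not do that.
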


\begin{proof} We may assume that $\alpha$ and $\alpha'$ are generators $A$ and $A'$. We may replace $X, X'$ and $X''$ with the respective strict Henselizations along $z, z'$ and $z''$. Write $z=\{z_1,\ldots, z_r\}$, $z'=\{z'_1,\ldots, z'_s\}$, $z''=\{z''_1,\ldots, z''_t\}$. Then $A$ and $A'$ break up as a disjoint union of graphs of morphisms 
\[
f_{jk}:X_{z_k}\to X'_{z'_j};\quad g_{ij}:X'_{z'_j}\to X''_{z''_i}
\]
and $A'\circ A$ is thus the sum of the graphs of the compositions $g_{ij}\circ f_{jk}$. Therefore,  each irreducible component of the support of $A'\circ A$ is smooth. This verifies condition (1) of  definition~\ref{Def:CorSupp}; the condition (2) is easy and is left to the reader.

 The compatibility of $\co^n$ with the composition of correspondences follows directly from theorem~\ref{thm:Main1}(6).
\end{proof}

\begin{prop}\label{prop:BaseChange} Let $i:\Delta_1\to \Delta$ be a closed   immersion  of quasi-projective schemes in $\Sm/k$, take $X, X'\in \Sm/k$ and $\alpha\in \Hom_{\SmCor}(X,X')$. Let $Z\subset X\times\Delta$, $Z'\subset X'\times\Delta$ be closed codimension $n$ subsets. Suppose that
\begin{enumerate}
\item $Z_1:=Z\cap X\times\Delta_1$ and $Z_1':=Z'\cap X'\times\Delta_1$ have codimension $n$ in $X\times\Delta_1$, $X'\times\Delta_1$, respectively.
\item $\alpha\times\id_{\Delta}$ is in $\Hom_{\SmCor}(X\times\Delta,X'\times\Delta)_{Z, Z'}$
\item $\alpha\times\id_{\Delta_1}$ is in $\Hom_{\SmCor}(X\times\Delta_1,X'\times\Delta_1)_{Z_1, Z_1'}$
\end{enumerate}
Then the diagram in $\SH_{S^1}(k)/f_{n+1}$
\[
\xymatrixcolsep{50pt}
\xymatrix{
(X\times\Delta_1)^{(Z_1)}\ar[d]_{\id\times i}\ar[r]^-{\co^n(\alpha\times\id)}&(X'\times\Delta_1)^{(Z'_1)}\ar[d]^{\id\times i}\\
(X\times\Delta)^{(Z)}\ar[r]_-{\co^n(\alpha\times\id)}&(X'\times\Delta)^{(Z')}
}
\]
commutes.
\end{prop}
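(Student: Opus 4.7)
My strategy is to realize the closed immersion $\id\times i$ itself as a co-transfer of a naturally associated graph correspondence, and then to deduce the commutativity of the square from the functoriality of $\co^n$ already established in Lemma~\ref{lem:CorSuppFunct}.

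First, I let $\Gamma := \Gamma_{\id\times i}\subset (X\times\Delta_1)\times(X\times\Delta)$ denote the graph of the closed immersion $\id\times i:X\times\Delta_1\hookrightarrow X\times\Delta$, regarded as an irreducible generator of $\Hom_{\SmCor}(X\times\Delta_1,X\times\Delta)$. I would verify that $\Gamma$ lies in the admissible subgroup $\Hom_{\SmCor}(X\times\Delta_1,X\times\Delta)_{Z_1,Z}$ of Definition~\ref{Def:CorSupp}: the normalization of $\Gamma$ is $\Gamma$ itself (being smooth, isomorphic to $X\times\Delta_1$), and the projection to the source is an isomorphism, hence \'etale, so condition~(1) holds; the generic points of $\Gamma\cap((X\times\Delta_1)\times Z)$ identify under the source projection with the generic points of $Z_1$, hence land in $Z_1$, giving condition~(2). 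The identical verification applies to the graph of $\id\times i:X'\times\Delta_1\hookrightarrow X'\times\Delta$.

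Unwinding Definition~\ref{Def:CorSupp} for $\Gamma$, the co-transfer $\co^n_{a'/z}$ entering its construction is $\co^n_{z_1/z_1}$ (since $a'=z=z_1$), which is the identity by Lemma~\ref{lem:CoTrId}; the final projection $\Gamma\to X\times\Delta$ is just $\id\times i$. Therefore $\co^n(\Gamma)$ agrees with the natural map $\id\times i:(X\times\Delta_1)^{(Z_1)}\to(X\times\Delta)^{(Z)}$ on pointed pairs, and analogously on the $X'$-side. A short direct computation at the level of finite correspondences (intersecting the supports in the respective fiber products) then yields the identity
\[
(\alpha\times\id_{\Delta})\circ\Gamma_{\id\times i}^{(X)} \;=\; \Gamma_{\id\times i}^{(X')}\circ(\alpha\times\id_{\Delta_1}),
\]
both sides being the correspondence from $X\times\Delta_1$ to $X'\times\Delta$ supported on $A\times\Delta_1$ via $(a,\delta_1)\mapsto(p_X(a),\delta_1,p_{X'}(a),i(\delta_1))$, where $A=\supp(\alpha)$.

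Applying Lemma~\ref{lem:CorSuppFunct} to both sides of this equality, and invoking hypotheses~(2) and~(3) of the proposition to supply admissibility of $\alpha\times\id_\Delta$ and $\alpha\times\id_{\Delta_1}$ respectively, one obtains
\[
\co^n(\alpha\times\id_\Delta)\circ\co^n(\Gamma_{\id\times i}^{(X)}) \;=\; \co^n(\Gamma_{\id\times i}^{(X')})\circ\co^n(\alpha\times\id_{\Delta_1}),
\]
which under the identification of the previous paragraph is precisely the desired commutativity. I expect the main obstacle not to be any single step in isolation, but rather the careful bookkeeping required to confirm that all four composite correspondences appearing above remain in the admissible classes of Definition~\ref{Def:CorSupp}; this in turn rests on hypothesis~(1) of the proposition, which guarantees that $Z_1$ and $Z_1'$ have the correct codimension so that the excision isomorphisms of Lemma~\ref{lem:ExcisionIso} apply throughout.
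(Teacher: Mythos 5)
Your argument is correct, and it takes a genuinely different route from the paper's proof. The paper factors $\Delta_1\hookrightarrow\Delta$ into a chain of codimension-one closed immersions, semi-localizes about the generic points of $Z_1, Z_1'$, normalizes $Z$ and $Z'$, and then invokes lemma~\ref{lem:intersMult} (the identification $i=i_\co$) to rewrite both vertical arrows in the square as explicit cycle-theoretic maps $\sum_j m_j\co^n_{\tilde z_j/\cdot}$; the commutativity then follows from theorem~\ref{thm:Main1}(6). You instead observe that the vertical maps $\id\times i$ are themselves already of the form $\co^n(\Gamma_{\id\times i})$ for the graph correspondence $\Gamma_{\id\times i}$, which trivially lands in the admissible class of Definition~\ref{Def:CorSupp} (normalization is the source itself, source projection is an isomorphism, and condition~(b) is by construction of $Z_1, Z_1'$). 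This turns the square into two compositions in $\Hom_{\SmCor}(-,-)_{-,-}$, and the whole statement collapses to two invocations of lemma~\ref{lem:CorSuppFunct} plus the elementary cycle identity $(\alpha\times\id_\Delta)\circ\Gamma^{(X)}_{\id\times i}=\Gamma^{(X')}_{\id\times i}\circ(\alpha\times\id_{\Delta_1})$. Your route buys you: no need for the codimension-one factorization, no use of lemma~\ref{lem:intersMult} (which is the most computational ingredient in the paper's argument), and a cleaner reduction to a single already-proved functoriality statement. One step you treat too quickly is the claim that the two composites agree \emph{as cycles with multiplicity}, not merely as supports: since $\id\times i$ is not flat, the composition $(\alpha\times\id_\Delta)\circ\Gamma^{(X)}_{\id\times i}$ is a pull-back along a closed immersion, and a priori intersection multiplicities could enter. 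They do not here, because the support of $\alpha\times\id_\Delta$ has the product form $|\alpha|\times\Delta^{\rm diag}$, so a K\"unneth computation of Tor over $\sO_{X\times\Delta}$ shows Tor-independence with $X\times\Delta_1$; it would be worth including this half-sentence to close the gap. With that addition, the proof is complete and is, if anything, tidier than the one in the paper.
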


\begin{proof} Since $\Delta$ is by assumption quasi-projective, we may factor $\Delta_1\to \Delta$ as a sequence of closed codimension 1 immersions
\[
\Delta_1=\Delta^d\to\Delta^{d-1}\to\ldots\to \Delta^1\to\Delta^0=\Delta
\]
such that each closed immersion $\Delta^i\to \Delta$ satisfies the conditions of the proposition. This reduces us to the case of a codimension one closed immersion.

We may replace $X\times\Delta$, $X'\times\Delta$, etc., with the respective semi-local schemes about the generic points of $Z_1$ and $Z_1'$. As $\Delta_1$ has codimension one on $\Delta$, it follows that the normalizations $Z^N$, $Z^{\prime N}$ of $Z$ and $Z'$ are smooth over $k$. Let $\tilde{i}:\tilde{z}\to Z^N$, $\tilde{i}':\tilde{z}'\to Z^{\prime N}$ be the points of $Z^N$, $Z^{\prime N}$ lying over $Z_1, Z_1'$, respectively, which we write as a disjoint union of closed points
\[
\tilde{z}=\amalg_j\tilde{z}_j;\quad \tilde{z}'=\amalg_j\tilde{z}'_j.
\]
 By lemma~\ref{lem:ExcisionIso}  and lemma~\ref{lem:intersMult}, we may rewrite the diagram in the statement of the proposition as
\[
\xymatrixcolsep{50pt}
\xymatrix{
\Sigma_{\P^1}^nZ_{1+} \ar[d]_{\sum_jm_j\co^n_{\tilde{z}_j/Z_1}}\ar[r]^-{\co^n(\alpha\times\id_{Z_1^N})}&\Sigma_{\P^1}^n Z'_{1+}\ar[d]^{\sum_jm'_j\co^n_{\tilde{z}'_j/Z'_1}}\\
\Sigma_{\P^1}^n\tilde{z}_+\ar[d]_{\tilde{i}}&\Sigma_{\P^1}^n\tilde{z}_+'\ar[d]^{\tilde{i}'}\\
\Sigma_{\P^1}^nZ^N\ar[r]_-{\co^n(\alpha\times\id_{Z^N})}&\Sigma_{\P^1}^nZ^{\prime N}
}
\]
where $\alpha\times\id_{Z^N}$, $\alpha\times\id_{Z_1}$ denote the correspondences induced by $\alpha\times\id_{\Delta}$ and
$\alpha\times\id_{\Delta_1}$, and the $m_j, m_j'$ are the relevant intersection multiplicities. The commutativitiy of this diagram follows from the functoriality of the maps $\co^n_{-/-}(-)$ with respect to the composition of correspondences (theorem~\ref{thm:Main1}).
\end{proof}

\section{Slices of loop spectra} Take $E\in \SH_{S^1}(k)$. Following Voevodsky's remarks in \cite{VoevSlice}, Neeman's version of Brown representability \cite{Neeman} gives us the motivic Postnikov tower
\[
\ldots f_{n+1}E\to f_nE\to\ldots\to f_0E=E,
\]
where $f_nE\to E$ is universal for morphisms from an object of $\Sigma_{\P^1}^n\SH_{S^1}(k)$ to $E$. The layer $s_nE$ is the {\em $n$ slice} of $E$, and is characterized up to unique isomorphism by the distinguished triangle
\begin{equation}\label{eqn:DistTri}
f_{n+1}E\to f_nE\to s_nE\to \Sigma_sf_{n+1}E.
\end{equation}
The fact that this distinguished triangle determines $s_nE$ up to unique isomorphism rather than just up to isomorphism follows from
\begin{equation}\label{eqn:Vanishing}
\Hom_{\SH_{S^1}(k)}(\Sigma^{n+1}_{\P^1}\SH_{S^1}(k), s_nE)=0
\end{equation}
To see this, just use the universal property of $f_{n+1}E\to E$ and the long exact sequence of Homs associated to the distinguished triangle \eqref{eqn:DistTri}. In particular, using the description of $\Hom_{\SH_{S^1}(k)/f_{n+1}}(-,-)$ via right fractions we have

\begin{lem}\label{lem:Descent} For all $F,E\in \SH_{S^1}(k)$ and $n\ge0$,  the natural map
\[
\Hom_{\SH_{S^1}(k)}(F,s_nE)\to \Hom_{\SH_{S^1}(k)/f_{n+1}}(F,s_nE)
\]
is an isomorphism.
\end{lem}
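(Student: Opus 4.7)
The plan is to use the right-fractions description of $\Hom$ in $\SH_{S^1}(k)/f_{n+1}$ together with the vanishing \eqref{eqn:Vanishing} to show that the class of maps $s\colon F'\to F$ whose cone lies in $\Sigma^{n+1}_{\P^1}\SH_{S^1}(k)$ becomes inverted already after applying $\Hom_{\SH_{S^1}(k)}(-,s_nE)$.

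First, I recall that a morphism $F\to s_nE$ in $\SH_{S^1}(k)/f_{n+1}$ is represented by a right fraction $F\xleftarrow{s}F'\xrightarrow{g}s_nE$ in $\SH_{S^1}(k)$ with $\cone(s)\in\Sigma^{n+1}_{\P^1}\SH_{S^1}(k)$, and two such fractions are identified under the usual equivalence. Two straight morphisms $h_1,h_2\colon F\to s_nE$ represent the same morphism in the localization iff there exists such an $s$ with $h_1\circ s=h_2\circ s$.

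The key technical step: for any $s\colon F'\to F$ with $C:=\cone(s)\in\Sigma^{n+1}_{\P^1}\SH_{S^1}(k)$, the induced map
\[
s^*\colon \Hom_{\SH_{S^1}(k)}(F,s_nE)\longrightarrow \Hom_{\SH_{S^1}(k)}(F',s_nE)
\]
is a bijection. Indeed, $\Sigma^{n+1}_{\P^1}\SH_{S^1}(k)$ is a localizing (hence triangulated) subcategory, so it is closed under $\Sigma_s^{\pm 1}$; applying the exact functor $\Hom_{\SH_{S^1}(k)}(-,s_nE)$ to the distinguished triangle $F'\xrightarrow{s}F\to C\to\Sigma_sF'$ and using \eqref{eqn:Vanishing} for both $C$ and $\Sigma_sC$ gives the bijectivity.

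Surjectivity of the map in the lemma now follows: given a right fraction $(s,g)$, the bijection $s^*$ produces a unique $\tilde g\colon F\to s_nE$ with $\tilde g\circ s=g$, and $\tilde g$ represents the same morphism in the localization as $(s,g)$. Injectivity follows by the same token: if $h_1\circ s=h_2\circ s$ for some such $s$, then injectivity of $s^*$ forces $h_1=h_2$.

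The only step with any content is the vanishing $\Hom_{\SH_{S^1}(k)}(C,s_nE)=0$ for $C\in\Sigma^{n+1}_{\P^1}\SH_{S^1}(k)$, and this is exactly \eqref{eqn:Vanishing}; the remaining verifications are formal manipulations with the calculus of right fractions. Thus no real obstacle arises, and the lemma follows.
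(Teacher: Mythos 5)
Your proof is correct and follows exactly the route the paper alludes to: the right-fractions description of $\Hom_{\SH_{S^1}(k)/f_{n+1}}$ combined with the vanishing \eqref{eqn:Vanishing}, for which the paper simply cites \cite[proposition 5-3]{Verdier}; you have spelled out that standard argument in full. One cosmetic remark: in the long exact sequence coming from $F'\xrightarrow{s}F\to C\to\Sigma_sF'$, bijectivity of $s^*$ uses the vanishing of $\Hom(-,s_nE)$ on $C$ and on $\Sigma_s^{-1}C$ (not $\Sigma_s C$), but since $\Sigma^{n+1}_{\P^1}\SH_{S^1}(k)$ is a triangulated subcategory closed under $\Sigma_s^{\pm1}$ this does not affect the argument.
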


See also \cite[proposition 5-3]{Verdier}

We recall the {\em de-looping formula} \cite[theorem 7.4.2]{LevineHC}
\[
s_n(\Omega_{\P^1}E)\cong \Omega_{\P^1}(s_{n+1}E)
\]
for $n\ge0$. 

Take $F\in \Spc_\bullet(k)$. For  $E\in \Spt_{S^1}(k)$, we have  
$\sHom^{int}(F,E)\in \SH$, which for $F=X_+$ is just $E(X)$, and in general is formed as the homotopy limit associated to the description of $F$ as a homotopy colimit of representable objects, i.e., take the Kan extension to $\Spc_\bullet$ of the functor $E:\Sm/k^\op\to \Spt_{S^1}(k)$. 

This gives us   the ``internal Hom" functor
\[
\sHom_{\SH_{S^1}(k)}(F,-):\SH_{S^1}(k)\to \SH_{S^1}(k)
\]
and more generally
\[
\sHom_{\SH_{S^1}(k)/f_{n+1}}(F,-):\SH_{S^1}(k)/f_{n+1}\to \SH_{S^1}(k),
\]
with natural transformation
\[
\sHom_{\SH_{S^1}(k)}(F,-)\to \sHom_{\SH_{S^1}(k)/f_{n+1}}(F,-).
\]
These have value on $E\in \Spt_{S^1}(k)$ defined by taking a fibrant model $\tilde{E}$ of $E$ (in $\SH_{S^1}(k)$ or $\SH_{S^1}(k)/f_{n+1}$, as the case may be) and forming the presheaf on $\Sm/k$
\[
X\mapsto \sHom^{int}(F\wedge X_+, \tilde{E}).
\]
Putting the de-looping formula together with lemma~\ref{lem:Descent} gives us

\begin{prop}\label{prop:Descent} For $E\in \SH_{S^1}(k)$  we have natural isomorphisms
\[
s_0(\Omega^n_{\P^1}E)\cong \Omega^n_{\P^1}s_nE\cong  \sHom_{\SH_{S^1}(k)/f_{n+1}}((\P^1,1)^{\wedge n}, s_nE)
\]
\end{prop}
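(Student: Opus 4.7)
The plan is to prove the two isomorphisms separately.

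For the first isomorphism $s_0(\Omega^n_{\P^1}E)\cong \Omega^n_{\P^1}s_nE$, I would proceed by induction on $n\ge 0$, driven by the de-looping formula $s_j(\Omega_{\P^1}F)\cong\Omega_{\P^1}s_{j+1}F$ recalled just above the statement. The case $n=0$ is tautological. For $n\ge 1$, write $s_0(\Omega^n_{\P^1}E)=s_0(\Omega_{\P^1}(\Omega^{n-1}_{\P^1}E))$ and apply de-looping once to extract one copy of $\Omega_{\P^1}$ while bumping the slice index from $0$ to $1$; iterating $n$ times, each factor of $\Omega_{\P^1}$ migrates outside the slice functor and the slice index climbs from $0$ to $n$, producing $\Omega^n_{\P^1}s_nE$. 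Naturality in $E$ is inherited from the naturality of the de-looping isomorphism.

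For the second isomorphism $\Omega^n_{\P^1}s_nE\cong \sHom_{\SH_{S^1}(k)/f_{n+1}}((\P^1,1)^{\wedge n},s_nE)$, the plan is to apply Lemma~\ref{lem:Descent} objectwise. By the construction of the internal Hom recalled just above the proposition, $\Omega^n_{\P^1}s_nE=\sHom_{\SH_{S^1}(k)}((\P^1,1)^{\wedge n},s_nE)$, and the natural transformation
\[
\sHom_{\SH_{S^1}(k)}((\P^1,1)^{\wedge n},-)\to \sHom_{\SH_{S^1}(k)/f_{n+1}}((\P^1,1)^{\wedge n},-)
\]
supplies a canonical comparison map in $\SH_{S^1}(k)$. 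I would check that it is an isomorphism by evaluating at each $X\in\Sm/k$ and each integer $m$: the induced map on $\pi_m$ is identified (on both sides) with the natural map
\[
\Hom_{\SH_{S^1}(k)}(\Sigma^m_s((\P^1,1)^{\wedge n}\wedge X_+),s_nE)\to \Hom_{\SH_{S^1}(k)/f_{n+1}}(\Sigma^m_s((\P^1,1)^{\wedge n}\wedge X_+),s_nE),
\]
which is an isomorphism by Lemma~\ref{lem:Descent}.

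The most delicate step is that last identification: one must unwind the explicit recipes for both internal Homs (take a fibrant model $\widetilde{s_nE}$ in the appropriate model structure and then form $\sHom^{int}((\P^1,1)^{\wedge n}\wedge X_+,\widetilde{s_nE})$), using the model structure for $\SH_{S^1}(k)/f_{n+1}$ mentioned in the remark just after the definition of $\SH_{S^1}(k)/f_n$. This is really only a bookkeeping issue: because the vanishing \eqref{eqn:Vanishing} forces $s_nE$ to be local with respect to $\Sigma^{n+1}_{\P^1}\SH_{S^1}(k)$, a fibrant model of $s_nE$ in $\SH_{S^1}(k)$ already serves as a fibrant model in $\SH_{S^1}(k)/f_{n+1}$, so the two recipes produce levelwise weakly equivalent presheaves of spectra and the comparison map is automatically an equivalence.
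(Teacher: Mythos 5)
Your proof is correct and follows essentially the same route as the paper: the first isomorphism is the de-looping formula applied $n$ times, and the second is Lemma~\ref{lem:Descent}. Your additional care in promoting the Hom-group isomorphism of Lemma~\ref{lem:Descent} to an isomorphism of internal Hom objects (via the observation that $s_nE$ is already local for $\Sigma^{n+1}_{\P^1}\SH_{S^1}(k)$ by \eqref{eqn:Vanishing}, so a fibrant model in $\SH_{S^1}(k)$ serves in the localized category as well) is a sound and worthwhile elaboration of a step the paper leaves implicit.
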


\begin{proof}
Indeed,  the first isomorphism is just the de-looping isomorphism repeated $n$ times. For the second, we have
\begin{align*}
\Omega^n_{\P^1}s_nE&\cong \sHom_{\SH_{S^1}(k)}((\P^1,1)^{\wedge n},s_nE)\\
&\cong \sHom_{\SH_{S^1}(k)/f_{n+1}}((\P^1,1)^{\wedge n}, s_nE)
\end{align*}
the second isomorphism following from lemma~\ref{lem:Descent}.
\end{proof}

\begin{Def}\label{Def:Tranfer} Suppose that $\Char k=0$. Take $E\in \SH_{S^1}(k)$ and take $\alpha\in \Hom_{\SmCor}(X,Y)$. Define the {\em transfer}
\[
\Tr_{Y/X}(\alpha): (\Omega^n_{\P^1}s_nE)(Y)\to (\Omega^n_{\P^1}s_nE)(X)
\]
as follows: 
\begin{align*}
(\Omega^n_{\P^1}s_nE)(Y)&\cong \sHom^{int}{\SH_{S^1}(k)}(Y_+, \Omega^n_{\P^1}s_nE)\\
&\cong  \sHom^{int}_{\SH_{S^1}(k)}(\Sigma^n_{\P^1}Y_+,  s_nE)\\
&\cong  \sHom^{int}_{\SH_{S^1}(k)/f_{n+1}}(\Sigma^n_{\P^1}Y_+,  s_nE)\\
&\xrightarrow{\co^n(\alpha)^*}\sHom^{int}_{\SH_{S^1}(k)/f_{n+1}}(\Sigma^n_{\P^1}X_+,  s_nE)\\
&\cong  \sHom^{int}_{\SH_{S^1}(k)}(X_+, \Omega^n_{\P^1}s_nE)\\
&(\Omega^n_{\P^1}s_nE)(X).
\end{align*}
\end{Def}

\begin{thm}\label{thm:Transfer} Suppose that $\Char k=0$. For $E\in \SH_{S^1}(k)$, the maps $\Tr(\alpha)$ extend the presheaf
\[
\Omega^n_{\P^1}s_n E:\Sm/k^\op\to \SH
\]
to an $\SH$-valued presheaf with transfers
\[
\Omega^n_{\P^1}s_n E:\SmCor(k)^\op\to \SH
\]
\end{thm}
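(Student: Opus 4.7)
The plan is to reduce the theorem to the functoriality of the assignment $\alpha \mapsto \co^n(\alpha)$ already established in theorem~\ref{thm:Main1}(6). The definition of $\Tr(\alpha)$ is built by applying $\sHom^{int}(-, s_nE)$ to $\co^n(\alpha)$ together with the $\Sigma^n_{\P^1} \dashv \Omega^n_{\P^1}$ adjunction and the de-looping isomorphism, so all the compatibility we need should come for free once we check that each step is natural.

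First I would record the internal version of lemma~\ref{lem:Descent}: for any $F\in\Spc_\bullet(k)$, the canonical map
\[
\sHom^{int}_{\SH_{S^1}(k)}(F, s_nE)\to \sHom^{int}_{\SH_{S^1}(k)/f_{n+1}}(F, s_nE)
\]
is a weak equivalence of presheaves of spectra on $\Sm/k$. This follows by applying lemma~\ref{lem:Descent} to $F\wedge T_+$ for each $T\in\Sm/k$, since both sides of the internal Hom are defined sectionwise as these mapping spectra. Combined with the suspension-loop adjunction and the de-looping formula of proposition~\ref{prop:Descent}, this gives a natural isomorphism $(\Omega^n_{\P^1}s_nE)(X)\cong\sHom^{int}_{\SH_{S^1}(k)/f_{n+1}}(\Sigma^n_{\P^1}X_+, s_nE)$ that is functorial in $X$ through either category.

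Next I would verify the three things required to be a presheaf with transfers. \textbf{Normalization:} for $\alpha=\Gamma_f$ the graph of a morphism $f:X\to Y$, the computation of $(\P^1_?,1)(\Gamma_f)=f$ carried out just before theorem~\ref{thm:Main1}(6) shows that $\co^n(\Gamma_f)=\Sigma^n_{\P^1}f_+$ in $\SH_{S^1}(k)/f_{n+1}$, and applying $\sHom^{int}(-, s_nE)$ recovers the ordinary restriction map $f^*$. \textbf{Additivity:} immediate from the $\Z$-linearity of $\co^n$ and of $\sHom^{int}$. \textbf{Functoriality:} for $\alpha\in\Hom_{\SmCor}(X,Y)$, $\beta\in\Hom_{\SmCor}(Y,Z)$, theorem~\ref{thm:Main1}(6) gives
\[
\co^n(\beta\circ\alpha)=\co^n(\beta)\circ\co^n(\alpha)\quad\text{in } \SH_{S^1}(k)/f_{n+1}.
\]
Applying $\sHom^{int}_{\SH_{S^1}(k)/f_{n+1}}(-, s_nE)$ reverses the order of composition, yielding $\Tr(\beta\circ\alpha)=\Tr(\alpha)\circ\Tr(\beta)$ as required for a contravariant functor out of $\SmCor(k)$.

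The main subtlety — and the step that needs the most care — is the internal-Hom version of lemma~\ref{lem:Descent}: one must make sure that fibrant replacement in the localized model structure (provided by Pelaez's model, cited in the remark after the definition of $\SH_{S^1}(k)/f_n$) does not disturb the sectionwise values of $s_nE$. Because $s_nE$ already satisfies the vanishing \eqref{eqn:Vanishing}, its fibrant replacement in $\SH_{S^1}(k)/f_{n+1}$ agrees with its fibrant replacement in $\SH_{S^1}(k)$, and the equivalence of internal Homs follows. Once this is in place, the theorem is essentially a formal consequence of the structural results from the previous sections.
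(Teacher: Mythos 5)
Your proof is correct and follows the same approach as the paper's (one-sentence) argument: both reduce the theorem to the factoring of $\Tr(\alpha)$ through an internal Hom in $\SH_{S^1}(k)/f_{n+1}$ together with the functoriality of $\alpha\mapsto\co^n(\alpha)$ established in theorem~\ref{thm:Main1}(6). Your elaboration — the sectionwise internal-Hom version of lemma~\ref{lem:Descent}, the remark that fibrant replacement in the localized model structure does not disturb $s_nE$ because of the vanishing \eqref{eqn:Vanishing}, and the explicit verification of normalization, additivity, and contravariant functoriality — is a faithful and correct fleshing-out of points the paper leaves implicit.
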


\begin{proof} This follows from the definition of the maps $\Tr(\alpha)$ and theorem~\ref{thm:Main1}, the main point being that the maps 
$\Tr(\alpha)$ factor through an internal Hom in $\SH_{S^1}(k)/f_{n+1}$.
\end{proof}

\begin{cor}\label{cor:Trans0}  Suppose that $\Char k=0$. For $E\in \SH_{S^1}(k)$, there is an extension of the presheaf
\[
s_0\Omega_{\P^1} E:\Sm/k^\op\to \SH
\]
to an $\SH$-valued presheaf with transfers
\[
s_0\Omega_{\P^1} E:\SmCor(k)^\op\to \SH.
\]
\end{cor}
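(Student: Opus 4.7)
The proof proposal is essentially to combine two results already established in the excerpt, so the plan is short and linear.

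First, I would invoke the de-looping formula from Proposition~\ref{prop:Descent}, specialized to $n=1$, which gives a natural isomorphism
\[
s_0(\Omega_{\P^1}E)\;\cong\;\Omega_{\P^1}(s_1 E)\;=\;\Omega^1_{\P^1}s_1 E
\]
in $\SH_{S^1}(k)$, and hence in particular an isomorphism of $\SH$-valued presheaves on $\Sm/k^\op$. So it suffices to put a transfer structure on the right-hand side.

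Second, I would apply Theorem~\ref{thm:Transfer} directly to $E\in\SH_{S^1}(k)$ with $n=1$. That result asserts, under the standing hypothesis $\Char k=0$, that the presheaf $\Omega^n_{\P^1}s_n E$ extends naturally to an $\SH$-valued presheaf with transfers on $\SmCor(k)^\op$; for $n=1$ this gives the desired transfer structure on $\Omega_{\P^1}s_1 E$. Transporting the transfers across the natural isomorphism above, I obtain the extension of $s_0\Omega_{\P^1}E$ to a functor on $\SmCor(k)^\op$.

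There is no real obstacle at this step: all the substantive work—identifying $s_0\Omega_{\P^1}E$ with $\Omega_{\P^1}s_1 E$ via the de-looping formula, checking that morphisms into a slice descend to morphisms in $\SH_{S^1}(k)/f_{n+1}$ (lemma~\ref{lem:Descent}), and establishing that $\Sigma_{\P^1}(-)_+$ extends to correspondences after such localization (theorem~\ref{thm:Main1}(6))—was already carried out in sections~\ref{sec:cotrans}--\ref{sec:Supp} and consolidated in Theorem~\ref{thm:Transfer}. The corollary is therefore a direct specialization.
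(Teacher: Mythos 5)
Your argument is exactly the paper's proof: apply the de-looping isomorphism $s_0\Omega_{\P^1}E\cong\Omega_{\P^1}s_1E$ and then specialize Theorem~\ref{thm:Transfer} to $n=1$. Correct and identical in approach.
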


\begin{proof} This is just the case $n=1$ of theorem~\ref{thm:Transfer}, together with the de-looping isomorphism
\[
s_0\Omega_{\P^1} E\cong  \Omega_{\P^1} s_1E.
\]
\end{proof}

\begin{rem} The corollary is actually the main result, in that one can deduce  theorem~\ref{thm:Transfer} from corollary~\ref{cor:Trans0} (applied to  $\Omega^{n-1}_{\P^1}E$) and the de-looping formula
\[
\Omega^n_{\P^1}s_nE\cong s_0\Omega^n_{\P^1}E=s_0\Omega_{\P^1}(\Omega^{n-1}_{\P^1}E).
\]
As the maps $\co^n(\alpha)$ are defined by smashing $\co^1(\alpha)$ with an identity map, this procedure does indeed give back the maps
\[
\Tr(\alpha):\Omega^n_{\P^1}s_nE(Y)\to \Omega^n_{\P^1}s_nE(X)
\]
as defined above.
\end{rem}

\begin{proof}[proof of theorem~\ref{IntroThm:Loops}] The weak transfers defined above give rise to homotopy invariant sheaves with transfers in the usual sense by taking the sheaves of homotopy groups of the motivic spectrum in question. For instance, corollary~\ref{cor:Trans0}  gives the   sheaf $\pi_m(s_0\Omega_{\P^1}E)$ the structure of a
homotopy invariant sheaf with transfers, in particular, an effective motive. In fact, these are {\em birational motives} in the sense of Kahn-Huber-Sujatha \cite{HuberKahn, KahnSujatha}, as $s_0F$ is a birational $S^1$-spectrum for each $S^1$-spectrum $F$. The classical Postnikov tower thus gives us a spectral sequence
\[
E^2_{p,q}:=H^{-p}(X_\Nis, \pi_q(s_0\Omega_{\P^1}E))\Longrightarrow \pi_{p+q}(s_0\Omega_{\P^1}E(X))
\]
with $E^2$ term a ``generalized motivic cohomology" of $X$. As the sheaves $\pi_q(s_0\Omega_{\P^1}E)$ are motives, we may replace Nisnevich cohomology with Zariski cohomology; as the sheaves $\pi_q(s_0\Omega_{\P^1}E)$ are birational, i.e., Zariski locally trivial, the higher Zariski cohomology vanishes, giving us
\[
\pi_n(s_0\Omega_{\P^1}E(X))\cong H^0(X_\Zar, \pi_n(s_0\Omega_{\P^1}E))= \pi_n(s_0\Omega_{\P^1}E(k(X)).
\]
\end{proof}

In short, we have  shown that the 0th slice of a $\P^1$-loop spectrum has transfers in the weak sense. We have already seen in section~\ref{sec:Example} that this does not hold for an arbitrary object of $\SH_{S^1}(k)$; in the next section we will see that the higher slices of an arbitrary $S^1$-spectrum do have transfers, albeit in an even weaker sense than the one used above.

\section{Transfers on the generalized cycle complex} \label{sec:GenCyc} We begin by recalling from \cite[theorem 7.1.1]{LevineHC} models for $f_nE$ and $s_nE(X)$ that are reminiscent of Bloch's higher cycle complex \cite{AlgCyc}. To simplify the notation, we will always assume that we have taken a model $E\in\Spt_{S^1}(k)$ which is quasi-fibrant.

For a scheme $X$ of finite type and locally equi-dimensional over $k$, let $\sS^{(n)}_X(m)$ be the set of closed subsets $W$ of $X\times\Delta^m$  of codimension $\ge n$, such that, for each face $F$ of $\Delta^n$, $W\cap X\times F$ has codimension $\ge n$ on $X\times F$ (or is empty). We order by $\sS^{(n)}_X(m)$  inclusion. 

For $X\in \Sm/k$, we let
\[
E^{(n)}(X,m):=\colim_{W\in \sS^{(n)}_X(m)}E^{(W)}(X\times\Delta^m),
\]
where $E^{(W)}(X)$ is by definition the homotopy fiber of the restriction map $E(X\times\Delta^n)\to E(X\times\Delta^n\setminus W)$. Similarly, for $0\le n\le n'$, we define
\[
E^{(n/n')}(X,m):=\colim_{W\in \sS^{(n)}_X(m), W'\in  \sS^{(n')}_X(m) }E^{(W\setminus W')}(X\times\Delta^m\setminus W')
\]

The conditions on the intersections of $W$ with $X\times F$ for faces $F$ means that   $m\mapsto \sS^{(n)}_X(m)$ form a cosimplicial set, denoted $\sS^{(n)}_X$, for each $n$ and that $\sS^{(n')}_X$ is a cosimplicial subset of $\sS^{(n)}_X$ for $n\le n'$. Thus the restriction maps for $E$ make $m\mapsto E^{(n)}(X,m)$ and $m\mapsto E^{(n/n')}(X,m)$ simplicial spectra, denoted $E^{(n)}(X,-)$ and $E^{(n/n')}(X,-)$. We denote the associated total spectra by $|E^{(n)}(X,-)|$ and $|E^{(n/n')}(X,-)|$.

 The inclusion $\sS^{(n')}_X(m)\to \sS^{(n)}_X(m)$ for $n\le n'$ and the evident restriction maps give the sequence
\[
|E^{(n')}(X,-)|\to |E^{(n)}(X,-)|\to |E^{(n/n')}(X,-)|
\]
which is easily seen to be a weak homotopy fiber sequence.

We note that $|E^{(0)}(X,-)|=E(X\times\Delta^*)$; as $E$ is homotopy invariant, the canonical map
\[
E(X)\to |E^{(0)}(X,-)|
\]
is thus a weak equivalence. We therefore have the tower in $\SH$
\begin{equation}\label{eqn:HCTower}
\ldots\to |E^{(n+1)}(X,-)|\to |E^{(n)}(X,-)|\to \ldots\to |E^{(0)}(X,-)|\cong E(X)
\end{equation}
with $n$th layer isomorphic to $|E^{(n/n+1)}(X,-)|$. We call this tower the {\em homotopy coniveau tower} for $E(X)$. In this regard, one of the main results from \cite{LevineHC} states

\begin{thm}[\hbox{\cite[theorem 7.1.1]{LevineHC}}]\label{thm:Slice} There is a canonical isomorphism of the tower \eqref{eqn:HCTower} with the motivic Postnikov tower evaluated at $X$:
\[
\ldots\to f_{n+1}E(X)\to f_nE(X)\to \ldots\to f_0E(X)=E(X),
\]
giving a canonical isomorphism
\[
s_nE(X)\cong |E^{(n/n+1)}(X,-)|.
\]
\end{thm}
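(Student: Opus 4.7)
The plan is to verify that $|E^{(n)}(X,-)|\to E(X)$ satisfies the universal property characterizing $f_nE(X)\to E(X)$: terminality among morphisms in $\SH_{S^1}(k)$ from objects of $\Sigma^n_{\P^1}\SH_{S^1}(k)$ to $E$. Once this is established for each $n$, the vanishing \eqref{eqn:Vanishing} forces a canonical isomorphism of the two towers, and the slice identification $s_nE(X)\cong|E^{(n/n+1)}(X,-)|$ drops out by comparing cofibers.

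First I would establish \emph{effectivity}: viewed as a presheaf on $\Sm/k$, $X\mapsto|E^{(n)}(X,-)|$ takes values in the localizing subcategory $\Sigma^n_{\P^1}\SH_{S^1}(k)$. The basic building block is $E^{(W)}(X\times\Delta^m)$ for $W\in\sS^{(n)}_X(m)$; when $W$ is smooth of pure codimension $n$ with normal bundle $N_W$, Morel--Voevodsky purity identifies this with the mapping spectrum out of the Thom space $\Th(N_W)$, exhibiting it as a rank-$n$ $\P^1$-suspension contribution. Non-smooth $W$ and nontrivial normal bundles reduce to this case via excision, stratification by smooth strata, and closure of $\Sigma^n_{\P^1}\SH_{S^1}(k)$ under filtered colimits and totalizations of simplicial spectra.

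Next I would establish \emph{universality} via a moving lemma. Given $F\in\Sigma^n_{\P^1}\SH_{S^1}(k)$ and a morphism $F\to E(X)$, the goal is to factor it through $|E^{(n)}(X,-)|$. After adjunction such a morphism corresponds to data supported on codimension $\geq n$ loci in products $X\times\Delta^m$; the moving lemma asserts that, up to simplicial homotopy, these supports can be chosen to lie in $\sS^{(n)}_X(m)$, i.e., to meet every face in codimension $\geq n$. Combined with effectivity, this produces mutually inverse maps between the two towers, and the uniqueness built into the universal property of $f_n$ makes the identification canonical.

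The main obstacle is the moving lemma itself. Beyond the codimension bounds on every face --- a Chow/Bloch-style moving argument in the spirit of the proof that Bloch's cycle complex is homotopy invariant --- one must ensure strict functoriality in $X$, so that pullback by an arbitrary morphism in $\Sm/k$ intertwines the comparison with the slice tower. Achieving this typically requires Gabber-style presentation lemmas Zariski-locally and (in characteristic zero) resolution of singularities to rectify non-generic intersections. This is the principal technical content of \cite{LevineHC}, and without it neither the functoriality of the identification nor the universal property can be pinned down.
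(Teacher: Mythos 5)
The theorem carries the attribution \cite[theorem~7.1.1]{LevineHC}; it is imported, not proved, in the present paper, so there is no ``paper's own proof'' to set your sketch against. On its own terms your outline tracks the broad strategy of the argument in the cited reference: Morel--Voevodsky purity to place $|E^{(n)}(X,-)|$ in $\Sigma^n_{\P^1}\SH_{S^1}(k)$, and a Chow-style moving lemma to pin down the universal property; the slice identification then does drop out by comparing cofibers in the two towers.

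Two points deserve tightening. First, the universal property of $f_n$ concerns morphisms $F\to E$ \emph{in} $\SH_{S^1}(k)$, not morphisms $F\to E(X)$, which would live in $\SH$; as written your universality step conflates the two. Moreover, before any universal property can even be stated for $X\mapsto |E^{(n)}(X,-)|$, one must make this assignment into a presheaf of spectra, and that is not automatic: the support conditions $\sS^{(n)}_X$ are preserved only by flat pullback, so extending functoriality to all of $\Sm/k$ is itself the first and central application of the moving lemma (this is the content of \cite{LevineML}), logically prior to any factorization argument. Second, the vanishing \eqref{eqn:Vanishing} is not what forces the identification of the two towers; it only guarantees that $s_nE$ is determined up to \emph{unique} isomorphism by the distinguished triangle \eqref{eqn:DistTri}. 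The tower comparison has to come from the universal property of the truncations $f_n$, and once $f_nE\cong |E^{(n)}(?,-)|$ is established naturally in $n$, comparing cofibers yields $s_nE(X)\cong |E^{(n/n+1)}(X,-)|$ as you indicate.
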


We can further modify this description of $s_nE(X)$ as follows: Since $s_n$ is an idempotent functor, we have
\[
s_nE(X)\cong s_n(s_nE)(X)\cong  |(s_nE)^{(n/n+1)}(X,-)|
\]
Note that $ |(s_nE)^{(n/n+1)}(X,-)|$ fits into a weak homotopy fiber sequence
\[
 |(s_nE)^{(n+1)}(X,-)|\to  |(s_nE)^{(n)}(X,-)|\to  |(s_nE)^{(n/n+1)}(X,-)|
 \]
Using theorem~\ref{thm:Slice} in reverse, we have  the isomorphism in $\SH$
\[
 |(s_nE)^{(n+1)}(X,-)|\cong f_{n+1}(s_nE)(X)
\]
But as $f_{n+1}\circ f_n\cong f_{n+1}$, we see that $f_{n+1}(s_nE)\cong 0$ in $\SH_{S^1}(k)$ and thus 
\[
|(s_nE)^{(n)}(X,-)|\cong  |(s_nE)^{(n/n+1)}(X,-)|\cong s_nE(X)
\]
We may therefore use the simplicial model $|(s_nE)^{(n)}(X,-)|$ for $s_nE(X)$.

We will need a refinement of this construction, which takes into account the interaction of the support conditions with a given correspondence. 

\begin{Def} Let $A\subset Y\times X$ be a generator in $\Hom_{\SmCor}(Y,X)$; for each $m$, we let $A(m)\in \Hom_{\SmCor}(Y\times\Delta^m,X\times\Delta^m)$ denote the correspondence $A\times\id_{\Delta^m}$. Let $\sS^{(n)}_{X,A}(m)$ be the subset of $\sS^{(n)}_X(m)$ consisting of those $W'\in  \sS^{(n)}_X(m)$ such that
\begin{enumerate}
\item $W:=p_{Y\times\Delta^m}(A\times\Delta^m\cap Y\times W')$ is in $\sS^{(n)}_Y(m)$.
\item $A(m)$ is in $\Hom_{\SmCor}(Y\times\Delta^m,X\times\Delta^m)_{W,W'}$.
\end{enumerate}
For an arbitrary $\alpha\in \Hom_{\SmCor}(Y,X)$, write
\[
\alpha=\sum_{i=1}^r n_iA_i
\]
with the $A_i$ generators and the $n_i$ non-zero integers and define
\[
\sS^{(n)}_{X,\alpha}(m):=\cap_{i=1}^r\sS^{(n)}_{X,A_i}(m).
\]
If we have in addition to $\alpha$ a finite correspondence $\beta \in \Hom_{\SmCor}(Z,Y)$, we let $\sS^{(n)}_{X,\alpha,\beta}(m)\subset \sS^{(n)}_{X,\alpha}(m)$ be the set of $W\subset X\times\Delta^m$ such that $W$ is in $\sS^{(n)}_{X,\alpha}(m)$ and $p_{Y\times\Delta^m}(Y\times W\cap|\alpha|\times\Delta^m)$ is in $\sS^{(n)}_{Y,\beta}(m)$.
\end{Def}

For $f:Y\to X$ a flat morphism, one has
\[
\sS^{(n)}_{X,\Gamma_f}(m)=\sS^{(n)}_{X}(m)
\]
and for $g:Z\to Y$ a flat morphism, and $\alpha$ arbitrary, one has
\[
\sS^{(n)}_{X,\alpha, \Gamma_f}(m)=\sS^{(n)}_{X,\alpha}(m)
\]

Note that $m\mapsto \sS^{(n)}_{X,\alpha}(m)$ and $m\mapsto \sS^{(n)}_{X,\alpha,\beta}(m)$ define cosimplicial subsets of 
$m\mapsto \sS^{(n)}_{X}(m)$. We define the simplicial spectra  $E^{(n)}(X,-)_\alpha$ and $E^{(n)}(X,-)_{\alpha,\beta}$ using the support conditions $\sS^{(n)}_{X,\alpha}(m)$ and 
$\sS^{(n)}_{X,\alpha,\beta}(m)$ instead of $\sS^{(n)}_{X}(m)$:
\begin{align*}
&E^{(n)}(X,m)_\alpha:=\colim_{W\in \sS^{(n)}_{X,\alpha}(m)}E^{(W)}(X\times\Delta^m)\\
&E^{(n)}(X,m)_{\alpha,\beta}:=\colim_{W\in \sS^{(n)}_{X,\alpha,\beta}(m)}E^{(W)}(X\times\Delta^m)
\end{align*}
giving us the sequence of simplicial spectra
\[
E^{(n)}(X,-)_{\alpha,\beta}\to E^{(n)}(X,-)_{\alpha}\to E^{(n)}(X,-).
\]

The main ``moving lemma" \cite[theorem 2.6.2(2)]{LevineML}  yields
\begin{prop} \label{prop:ChowML} For $X\in\Sm/k$ affine, and $E\in \Spt_{S^1}(k)$ quasi-fibrant, the 
maps
\[
|E^{(n)}(X,-)_{\alpha,\beta}|\to |E^{(n)}(X,-)_{\alpha}|\to |E^{(n)}(X,-)|
\]
are weak equivalences.
\end{prop}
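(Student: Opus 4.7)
The plan is to reduce the proposition to a direct application of the main moving lemma \cite[Theorem 2.6.2(2)]{LevineML}, which treats exactly this kind of question: the effect on the total spectrum $|E^{(n)}(X,-)_?|$ of restricting the cosimplicial support family $\sS^{(n)}_X(-)$ to a subfamily cut out by finitely many ``good position'' constraints with respect to a finite list of auxiliary closed subsets or correspondences on $X$.

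First I would verify that each of the two inclusions
\[
\sS^{(n)}_{X,\alpha,\beta}(-)\subset \sS^{(n)}_{X,\alpha}(-)\subset \sS^{(n)}_X(-)
\]
is precisely of the form handled by the moving lemma. For the right-hand inclusion, the subfamily $\sS^{(n)}_{X,\alpha}(m)$ is cut out of $\sS^{(n)}_X(m)$ by requiring, for each component $A_i$ of $\alpha$, that the push-forward $p_{Y\times\Delta^m}(A_i\times\Delta^m\cap Y\times W')$ have the expected codimension $n$ and that the conditions (1),(2) of Definition~\ref{Def:CorSupp} hold for $A_i(m)$ relative to $W'$. These are exactly the conditions of étaleness of the normalization of $A_i$ along the generic points of $A_i\times\Delta^m\cap Y\times W'$ together with the codimension/containment condition on the projection --- which are generic conditions in the Chow sense. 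The left-hand inclusion is of the same form applied in the $Y$-direction relative to $\beta$ and the auxiliary support $p_{Y\times\Delta^m}(Y\times W\cap|\alpha|\times\Delta^m)$.

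Second, I would check that the hypotheses of the moving lemma are in place: $X$ is affine and smooth, and $E$ is quasi-fibrant so that $E$ is homotopy invariant on smooth $k$-schemes. This homotopy invariance is what converts the $\A^1$-family of $\GL_N$-translations produced by the moving lemma (after a choice of closed embedding $X\hookrightarrow\A^N$) into an actual simplicial homotopy between the restriction and a lifted translate, yielding the standard sandwich argument: any simplex in $E^{(n)}(X,m)$ is, up to $\A^1$-homotopy, equivalent to one supported on a $W$ lying in the smaller family, and the two choices of lift agree up to further $\A^1$-homotopy.

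The main obstacle will be point-set verification that the \emph{combined} support condition defining $\sS^{(n)}_{X,\alpha,\beta}(-)$ can be moved into simultaneously; that is, a single translate of $W$ must make both $W$ good for $\alpha$ on $X$ and $p_{Y\times\Delta^m}(Y\times W\cap|\alpha|\times\Delta^m)$ good for $\beta$ on $Y$. This follows because the moving lemma in \cite{LevineML} is formulated for an arbitrary finite list of codimension-type conditions at once, so one only needs to feed in both constraints as a single auxiliary collection of subschemes on $X$ (including the preimages under $\alpha$ of the supports on $Y$ attached to $\beta$) and invoke the lemma uniformly. With this verification the two maps in the statement become weak equivalences, and the proposition follows.
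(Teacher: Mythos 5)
Your proposal follows exactly the route the paper takes: the paper offers no argument beyond citing \cite[theorem 2.6.2(2)]{LevineML}, and your elaboration simply spells out why the support conditions $\sS^{(n)}_{X,\alpha,\beta}\subset\sS^{(n)}_{X,\alpha}\subset\sS^{(n)}_X$ fall within the scope of that moving lemma, with $X$ affine and $E$ quasi-fibrant (hence homotopy invariant) supplying the needed hypotheses. This is the same argument, with useful detail filled in; nothing is missing.
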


We proceed to the main construction of this section. Consider the simplicial model $|(s_nE)^{(n)}(X,-)|$ for $s_nE(X)$. For each $m$, we may consider the classical Postnikov tower for the spectrum $(s_nE)^{(n)}(X,m)$, which we write as
\[
\ldots\to \tau_{\ge p+1}(s_nE)^{(n)}(X,m)\to \tau_{\ge p}(s_nE)^{(n)}(X,m)\to\ldots\to (s_nE)^{(n)}(X,m),
\]
where 
\[
\tau_{\ge p+1}(s_nE)^{(n)}(X,m)\to (s_nE)^{(n)}(X,m)
\]
is the $p$-connected cover of $(s_nE)^{(n)}(X,m)$. The $p$th layer in this tower is of course the Eilenberg-Maclane spectrum on $\pi_p((s_nE)^{(n)}(X,m))$, or rather its $p$th suspension. Taking a functorial model for the 
 $p$-connected cover, we have for each $p$ the simpicial spectrum
 \[
 m\mapsto \tau_{\ge p+1}(s_nE)^{(n)}(X,m)
 \]
 giving us the tower of total spectra
\begin{equation}\label{eqn:SimplPostnikov}
\ldots\to |\tau_{\ge p+1}(s_nE)^{(n)}(X,-)|\to |\tau_{\ge p}(s_nE)^{(n)}(X,-)|\to\ldots\to |(s_nE)^{(n)}(X,-)|.
\end{equation}
The layers in this tower are then (up to suspension) the Eilenberg-Maclane spectrum on the chain complex $\pi_p(s_nE)^{(n)}(X,*)$, with differential as usual the alternating sum of the face maps. 

The chain complexes $\pi_p(s_nE)^{(n)}(X,*)$ are evidently functorial for smooth maps and inherit the homotopy invariance property from 
$(s_nE)^{(n)}(X,*)$ (see \cite[theorem 3.3.5]{LevineML}). Somewhat more surprising is

\begin{lem}\label{lem:NisExc} The complexes $\pi_p(s_nE)^{(n)}(X,*)$ satisfy Nisnevich excision.
\end{lem}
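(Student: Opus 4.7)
The strategy is to bootstrap Nisnevich descent from $s_nE$ itself (which, being fibrant in $\SH_{S^1}(k)$, satisfies Nisnevich descent termwise) through the support filtration and simplicial structure. Fix an elementary Nisnevich square
\[
\xymatrix{V\ar[r]\ar[d]& Y\ar[d]^f\\ U\ar[r] & X}
\]
and a simplicial level $m$. For each $W\in\sS^{(n)}_X(m)$ set $W_U:=W\cap U\times\Delta^m$, $W_Y:=f^{-1}W$, and $W_V:=f^{-1}W_U$. Since $f$ is \'etale, these lie in the appropriate $\sS^{(n)}_?(m)$, and the complement square $(X\times\Delta^m\setminus W,\,U\times\Delta^m\setminus W_U,\,Y\times\Delta^m\setminus W_Y,\,V\times\Delta^m\setminus W_V)$ is again an elementary Nisnevich square. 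Applying Nisnevich descent for $s_nE$ to both the ambient and the complement Nisnevich squares, and taking homotopy fibers of the horizontal restriction maps, one obtains that the square of spectra with supports
\[
\xymatrix{(s_nE)^{(W)}(X\times\Delta^m)\ar[r]\ar[d]&(s_nE)^{(W_U)}(U\times\Delta^m)\ar[d]\\(s_nE)^{(W_Y)}(Y\times\Delta^m)\ar[r]&(s_nE)^{(W_V)}(V\times\Delta^m)}
\]
is homotopy cartesian.

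Next, I pass to the filtered colimit over $W\in\sS^{(n)}_X(m)$, for which I need cofinality of the pullback functors $W\mapsto W_?$ into each $\sS^{(n)}_?(m)$. For $U$ (and similarly $V$) this is immediate: any $W'\in\sS^{(n)}_U(m)$ arises as $W_U$ with $W:=\overline{W'}\in\sS^{(n)}_X(m)$, the closure inheriting the codimension and face conditions since $W'$ was already closed in the open $U\times\Delta^m$. For $Y$, given $W'\in\sS^{(n)}_Y(m)$ I decompose $W'=W'_1\cup W'_2$ with $W'_1:=W'\cap(Y\setminus V)\times\Delta^m$ and $W'_2:=W'\cap V\times\Delta^m$; using the isomorphism $Y\setminus V\xrightarrow{\sim} X\setminus U$ to identify $W'_1$ with a closed subset $W_1\subset(X\setminus U)\times\Delta^m$, and taking $W_2$ to be the closure in $X\times\Delta^m$ of $(f\times\id)(W'_2)$, the union $W:=W_1\cup W_2$ lies in $\sS^{(n)}_X(m)$ and satisfies $f^{-1}W\supseteq W'$. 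Since filtered colimits preserve homotopy cartesian squares of spectra, we conclude that at each $m$ the square
\[
\xymatrix{(s_nE)^{(n)}(X,m)\ar[r]\ar[d]&(s_nE)^{(n)}(U,m)\ar[d]\\(s_nE)^{(n)}(Y,m)\ar[r]&(s_nE)^{(n)}(V,m)}
\]
is homotopy cartesian.

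Finally, to descend to the chain complex level, I note that at each $m$ the total homotopy fiber $F_m$ of this square is contractible, and by naturality in $m$ these assemble into a levelwise contractible simplicial spectrum $F_-$. Applying $\pi_p$ levelwise to the resulting levelwise Mayer--Vietoris cofiber sequence of simplicial spectra then yields the corresponding Mayer--Vietoris triangle for the chain complex $\pi_p(s_nE)^{(n)}(?,*)$ in $D(\Ab)$, which is the desired Nisnevich excision. The main obstacle is the cofinality verification in the $Y$-direction: one must check that the closure of $(f\times\id)(W'_2)$ in $X\times\Delta^m$, together with the transferred piece $W_1$ over $X\setminus U$, still meets each face in codimension at least $n$, which uses \'etaleness of $f$ (so that codimension is preserved under $f^{-1}$) together with the face-wise codimension assumption already built into membership of $W'$ in $\sS^{(n)}_Y(m)$.
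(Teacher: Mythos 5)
Your approach---bootstrap Nisnevich descent from $s_nE$ through the support filtration and then pass to the colimit over supports via a cofinality argument---is genuinely different from the paper's, but it has a gap at precisely the step you flag as ``the main obstacle.''

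The cofinality claims in step 3 are false at the level of index categories. Already in the $U$-direction: given $W'\in\sS^{(n)}_U(m)$, you take $W:=\overline{W'}\subset X\times\Delta^m$ and assert that it ``inherits the codimension and face conditions.'' The codimension condition is inherited (closure preserves dimension), but the face condition is not: $\overline{W'}\cap (X\times F)$ contains $\overline{W'}\cap ((X\setminus U)\times F)$, which has dimension bounded only by $\dim W'-1 = \dim X + m - n - 1$, and this can exceed $\dim X + \dim F - n$ whenever $\dim F < m-1$. In other words, the closure may meet the deeper faces improperly, so $\overline{W'}\notin\sS^{(n)}_X(m)$ in general. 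The same failure infects the $Y$-direction through your $W_2 = \overline{(f\times\id)(W'_2)}$. This is not a technicality that \'etaleness resolves: the non-surjectivity of $\sS^{(n)}_X(m)\to\sS^{(n)}_U(m)$ is exactly the obstruction that the moving lemma / localization theorem in this theory is designed to overcome, and the restriction map $|E^{(n)}(X,-)|\to|E^{(n)}(U,-)|$ is a surjection only up to quasi-isomorphism, not on the nose. Your proof plan invokes no such moving lemma, so the filtered-colimit step does not go through.

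The paper's argument takes a different route that bypasses cofinality entirely. It first uses the fact that $s_nE$ is a slice---via lemma~\ref{lem:Descent} and the purity isomorphism of lemma~\ref{lem:CanonThomIso}---to identify $(s_nE)^{(W)}(X\times\Delta^m)$ with $s_0(\Omega^n_{\P^1}E)(w)$, where $w$ is the set of generic points of $W$. This collapses the cosimplicial-group term to a direct sum over generic points, which then splits \emph{exactly} according to whether the generic point lies over $Z$ or over $U$, giving a termwise short exact sequence of complexes. The identification of the $U$-part with the honest complex $\pi_p(s_nE)^{(n)}(U,*)$ is then exactly where the localization technique of \cite[theorem 3.2.1]{LevineHC} (i.e.\ the moving lemma) enters. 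So the two nontrivial ingredients are the slice/purity structure of $s_nE$ (which your proof doesn't use) and the localization theorem (which your proof needs but doesn't invoke). If you want to repair your argument, you should replace the closure-based cofinality with an appeal to that localization theorem; at that point the purity reduction, while not strictly forced, makes the bookkeeping of the $Z$-versus-$U$ decomposition transparent.
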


\begin{proof} Let $W\subset X\times\Delta^m$ be a closed subset in $\sS^{(n)}_X(m)$, and let $w$ be the set of generic points of $W$. Then
\[
s_nE^{(W)}(X\times\Delta^m)\cong s_nE(\Sigma^n_{\P^1}w_+)\cong\Omega^n_{\P^1}(s_nE)(w)\cong s_0(\Omega^n_{\P^1}E)(w).
\]
This gives us the following description of $\pi_p((s_nE)^{(n)}(X,m))$:
\[
(s_nE)^{(n)}(X,m)\cong\oplus_w \pi_p(s_0(\Omega^n_{\P^1}E)(w))
\]
where the direct sum is over the set $\sT^{(n)}_X(m)$ of  generic points of irreducible $W\in \sS^{(n)}_X(m)$. 

Now let $i:Z\to X$ be a closed subset with open complement $j:U\to X$. For each $m$, we thus have the exact sequence
\begin{multline*}
0\to \oplus_{w\in Z\times\Delta^m\cap \sT^{(n)}_X(m)} \pi_p(s_0(\Omega^n_{\P^1}E)(w))\to
\oplus_{w\in \sT^{(n)}_X(m)} \pi_p(s_0(\Omega^n_{\P^1}E)(w))\\\to
\oplus_{w\in \sT^{(n)}_X(m)\cap U\times\Delta^m} \pi_p(s_0(\Omega^n_{\P^1}E)(w))\to 0
\end{multline*}
Define the subcomplex $\pi_p(s_nE)^{(n)}(X,*)_Z$ of $\pi_p(s_nE)^{(n)}(X,*)$ and quotient complex $\pi_p(s_nE)^{(n)}(U_X,*)$ of 
$\pi_p(s_nE)^{(n)}(X,*)$ by taking supports in 
\[
W\in \sS^{(n)}_X(m)\cap Z\times\Delta^*,\ W\in \sS^{(n)}_X(m)\cap U\times\Delta^*. 
\]
We thus have the term-wise exact sequence of complexes
\[
0\to \pi_p(s_nE)^{(n)}(X,*)_Z\to \pi_p(s_nE)^{(n)}(X,*)\to \pi_p(s_nE)^{(n)}(U_X,*)\to0
\]

The localization technique of  \cite[theorem 8.10]{LevineTL} (for details, see \cite[theorem 3.2.1]{LevineHC}) implies that the inclusion 
\[
\pi_p(s_nE)^{(n)}(U_X,*)\to \pi_p(s_nE)^{(n)}(U,*)
\]
is a quasi-isomorphism, and we therefore have the quasi-isomorphism
\[
\pi_p(s_nE)^{(n)}(X,*)_Z\to \cone( \pi_p(s_nE)^{(n)}(X,*)\xrightarrow{j^*} \pi_p(s_nE)^{(n)}(U,*)[-1].
\]
But the left-hand side only depends on the the Nisnevich neighborhood of $Z$ in $X$, which yields the desired Nisnevich excision property.
\end{proof}

We will use the results of section~\ref{sec:Supp} to give  $X\mapsto \pi_p(s_nE)^{(n)}(X,*)$ the structure of a complex of homotopy invariant  presheaves with transfer on $\Sm/k$, i.e. a motive.

For this, we consider the  complexes $\pi_p(s_nE)^{(n)}(X,*)_\alpha$, $\pi_p(s_nE)^{(n)}(X,*)_{\alpha,\beta}$ constructed above. The refined support condition are constructed so that, for each $W\in  \sS^{(n)}_{X,\alpha}(m)$, $\alpha$ is in $\Hom_{\SmCor}(Y,X)_{W',W}$, where 
\[
W'=p_1(Y\times\Delta^m\times W\cap|\alpha|\times\Delta^m). 
\]
We may therefore use the morphism $\co^n(\alpha\times\id_{\Delta^m})$ to define the map
\[
\Tr_{Y/X}(\alpha)(m):\pi_p((s_nE)^{(n)}(X,m))_\alpha\to \pi_p(s_nE)^{(n)}(Y,m).
\]
By proposition~\ref{prop:BaseChange}, the maps $\Tr_{Y/X}(m)$ define a map of complexes
\[
\Tr_{Y/X}(\alpha):\pi_p(s_nE)^{(n)}(X,*)_\alpha\to \pi_p(s_nE)^{(n)}(Y,*).
\]

Similarly, given $\beta\in\Hom_{\SmCor}(Z,Y)$, we have the map of complexes
\[
\Tr_{Y/X}(\alpha)_\beta:\pi_p(s_nE)^{(n)}(X,*)_{\alpha,\beta}\to \pi_p(s_nE)^{(n)}(Y,*)_\beta.
\]
Note that, due to possible ``cancellations" occurring when one takes the composition $\alpha\circ\beta$, we have only an inclusion
\[
\sS^{(n)}_{X,\alpha,\beta}(m)\subset \sS^{(n)}_{X,\alpha\circ\beta}(m)
\]
giving us a natural comparison map
\[
\iota_{\alpha,\beta}:\pi_p(s_nE)^{(n)}(X,*)_{\alpha,\beta}\to :\pi_p(s_nE)^{(n)}(X,*)_{\alpha\circ\beta}.
\]
Using our moving lemma again, we see that $\iota_{\alpha,\beta}$ is a quasi-isomorphism in case $X$ is affine.

\begin{lem} Suppose $\Char k=0$. For  
\[
\alpha\in \Hom_{\SmCor}(Z,Y),\ \beta\in\Hom_{\SmCor}(Z,Y), 
\]
we have
\[
\Tr_{Z/Y}(\beta)\circ \Tr_{Y/X}(\alpha)_\beta=\Tr_{Z/X}(\alpha\circ\beta)\circ \iota_{\alpha,\beta}.
\]
\end{lem}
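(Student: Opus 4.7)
The plan is to work simplicially and reduce the identity, level by level, to the functoriality of the supported co-transfers from Lemma~\ref{lem:CorSuppFunct}. Both sides of the claimed equation are obtained as colimits over codimension $n$ supports of maps between spectra of the form $(s_nE)^{(W)}(X\times\Delta^m)$, and both become equalities of complex maps after applying $\pi_p$, so it suffices to match them on each representing $W$-piece at every fixed simplicial degree $m$ and then check compatibility with the structural maps of the colimit and with the cosimplicial face/degeneracy maps.

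Fix $m$ and take $W\in\sS^{(n)}_{X,\alpha,\beta}(m)$. By the definition of the refined support system we have $W':=p_{Y\times\Delta^m}(Y\times W\cap|\alpha|\times\Delta^m)\in\sS^{(n)}_{Y,\beta}(m)$ and $W'':=p_{Z\times\Delta^m}(Z\times W'\cap|\beta|\times\Delta^m)$, together with the memberships $\alpha(m)\in\Hom_{\SmCor}(Y\times\Delta^m,X\times\Delta^m)_{W',W}$ and $\beta(m)\in\Hom_{\SmCor}(Z\times\Delta^m,Y\times\Delta^m)_{W'',W'}$. On this $W$-piece, $\Tr_{Y/X}(\alpha)_\beta$ is by construction pullback along $\co^n(\alpha(m)):(Y\times\Delta^m)^{(W')}\to(X\times\Delta^m)^{(W)}$, and the subsequent $\Tr_{Z/Y}(\beta)$ on the resulting $W'$-piece is pullback along $\co^n(\beta(m))$. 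Thus the LHS restricted to this $W$-piece is pullback along the composite $\co^n(\alpha(m))\circ\co^n(\beta(m))$.

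The key step is then Lemma~\ref{lem:CorSuppFunct} applied to $\alpha(m)$ and $\beta(m)$: it gives the membership $(\alpha\circ\beta)(m)\in\Hom_{\SmCor}(Z\times\Delta^m,X\times\Delta^m)_{W'',W}$ together with the functoriality equation $\co^n(\alpha(m))\circ\co^n(\beta(m))=\co^n((\alpha\circ\beta)(m))$. On the right-hand side, $\iota_{\alpha,\beta}$ acts as the identity on every $W$-piece, being induced purely by the inclusion of indexing sets, while $\Tr_{Z/X}(\alpha\circ\beta)$ on the $W$-piece is pullback along $\co^n((\alpha\circ\beta)(m))$, now targeting the piece indexed by $W''':=p_{Z\times\Delta^m}(Z\times W\cap|\alpha\circ\beta|\times\Delta^m)\subseteq W''$. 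Matching both sides on the $W$-piece thus reduces to showing that the two resulting pullback maps agree after inclusion into $\pi_p((s_nE)^{(n)}(Z,m))$.

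The subtle point, and the main obstacle, is precisely this discrepancy between $W''$ and $W'''$, caused by the possible cancellation that can occur when forming $\alpha\circ\beta$ as an algebraic cycle. It is resolved by noting that the two realizations of $\co^n((\alpha\circ\beta)(m))$ (one with source $W'''$, one with source $W''$) are compatible through the canonical map $(Z\times\Delta^m)^{(W''')}\to(Z\times\Delta^m)^{(W'')}$ coming from $W'''\subseteq W''$, so both representatives define the same element of the colimit. Compatibility with the face and degeneracy maps of the cosimplicial scheme $\Delta^\bullet$, needed to promote the degree-wise agreement to an equality of maps of complexes after applying $\pi_p$, is automatic from Theorem~\ref{thm:Main1}(2), since the cosimplicial structure maps are flat.
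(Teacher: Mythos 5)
Your proof is correct and takes essentially the same approach as the paper, whose entire argument is the one-line observation that the claim follows from Lemma~\ref{lem:CorSuppFunct}; you have simply unwound this into an explicit pointwise verification on each $W$-piece at each simplicial level, including the colimit bookkeeping around the potential proper containment $W'''\subseteq W''$ that explains the appearance of $\iota_{\alpha,\beta}$.
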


\begin{proof} This follows from lemma~\ref{lem:CorSuppFunct}.
\end{proof}

We have already noted that complexes $\pi_p(s_nE)^{(n)}(X,*)$  are functorial in $X$ for flat morphisms in $\Sm/k$, in particular for smooth  morphisms in $\Sm/k$. Let $\widetilde{\Sm}/k$ denote the subcategory of $\Sm/k$ with the same  objects and with morphisms the  smooth morphisms. The transfer maps we have defined on the refined complexes, together with the moving lemma~\ref{lem:CorSuppFunct} yield the following result:
\begin{thm}\label{thm:CorSuppExt} Suppose $\Char k=0$. Consider the presheaf
\[
\pi_p((s_nE)^{(n)}(-,*)):\widetilde{\Sm}/k^\op\to C^-(\Ab)
\]
on $\widetilde{\Sm}/k^\op$. Let 
\[
\iota:\widetilde{\Sm}/k\to SmCor(k)
\]
be the evident inclusion and let 
\[
Q:C^-(\Ab)\to D^-(\Ab)
\]
be the evident additive functor. There is a complex of presheaves with transfers
\[
\hat{\pi}_p((s_nE)^{(n)})^*:\SmCor(k)^\op\to C^-(\Ab)
\]
and an isomorphism of functors from $\widetilde{\Sm}/k^\op$ to $D^-(\Ab)$
\[
Q\circ \pi_p((s_nE)^{(n)}(-,*))\cong Q\circ \hat{\pi}_p((s_nE)^{(n)})^*\circ \iota.
\]
\end{thm}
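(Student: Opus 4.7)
The plan is to assemble the building blocks already in place---the refined subcomplexes $\pi_p((s_nE)^{(n)}(X,*))_\alpha$, the transfer maps $\Tr_{Y/X}(\alpha)$ defined on them, the moving lemma (Proposition~\ref{prop:ChowML}) which makes the inclusions into the unrefined complex quasi-isomorphisms for affine $X$, and the composition formula from the immediately preceding lemma---into a bona fide complex of presheaves with transfers whose restriction along $\iota$ recovers $\pi_p((s_nE)^{(n)}(-,*))$ up to quasi-isomorphism.

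First, for each smooth affine $X$ and each $\alpha\in\Hom_{\SmCor}(Y,X)$, I would invert the quasi-isomorphism $\pi_p((s_nE)^{(n)}(X,*))_\alpha\hookrightarrow\pi_p((s_nE)^{(n)}(X,*))$ in $D^-(\Ab)$ and post-compose with $\Tr_{Y/X}(\alpha)$. This yields a morphism
\[
\Tr(\alpha)\colon \pi_p((s_nE)^{(n)}(X,*))\longrightarrow \pi_p((s_nE)^{(n)}(Y,*))
\]
in $D^-(\Ab)$. Functoriality $\Tr(\beta)\circ\Tr(\alpha)=\Tr(\alpha\circ\beta)$ follows from the displayed composition formula together with a second application of the moving lemma, applied now to the doubly refined subcomplex $\pi_p((s_nE)^{(n)}(X,*))_{\alpha,\beta}$ which kills both $\iota_{\alpha,\beta}$ and the intermediate inclusion. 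This exhibits $\pi_p((s_nE)^{(n)}(-,*))$ as a presheaf with transfers in the derived category on the full subcategory of smooth affine schemes. Nisnevich excision (Lemma~\ref{lem:NisExc}) then allows me to extend from affines to all of $\Sm/k$ via Nisnevich sheafification, since the excision squares are compatible with the $\Tr$-maps by proposition~\ref{prop:BaseChange}.

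Next, I would rigidify the derived-level construction to a strict complex of presheaves with transfers $\hat\pi_p((s_nE)^{(n)})^*$ on $\SmCor(k)^{\op}$. The cleanest route is to take a cofibrant resolution in the projective model structure on complexes of presheaves with transfers on $\SmCor(k)$; alternatively one builds an explicit model by indexing over all finite tuples of correspondences and passing to the filtered system of jointly refined subcomplexes $\pi_p((s_nE)^{(n)}(X,*))_{\alpha_1,\dots,\alpha_N}$, each again quasi-isomorphic to the unrefined complex on affine $X$ by a cofinality argument from Proposition~\ref{prop:ChowML}. For a smooth morphism $f\colon X'\to X$ the graph $\Gamma_f$ is flat, so $\sS^{(n)}_{X,\Gamma_f}(m)=\sS^{(n)}_X(m)$ and $\Tr(\Gamma_f)=f^*$ on the nose; this gives the required natural isomorphism $Q\circ\pi_p((s_nE)^{(n)}(-,*))\cong Q\circ\hat\pi_p((s_nE)^{(n)})^*\circ\iota$ of functors to $D^-(\Ab)$.

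The main obstacle is the rigidification. The transfer $\Tr(\alpha)$ is naturally defined only on the $\alpha$-dependent subcomplex $\pi_p((s_nE)^{(n)}(X,*))_\alpha$, so one cannot simply declare the transfers to act on the original complex; one must select a model in the quasi-isomorphism class on which all transfers are simultaneously defined \emph{strictly}, and verify composition on the nose rather than merely up to quasi-isomorphism. This requires using the moving lemma of \cite[theorem 2.6.2(2)]{LevineML} in a uniform way across all finite tuples of correspondences, and is where the characteristic-zero hypothesis (needed for the moving lemma and for the transfer construction in theorem~\ref{thm:Main1}) enters essentially.
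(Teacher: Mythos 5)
Your proposal correctly identifies the ingredients and the shape of the argument, and it matches the paper's strategy up to the rigidification step: invert the quasi-isomorphisms provided by the moving lemma (Proposition~\ref{prop:ChowML}) on affines, use the refined complexes $\pi_p((s_nE)^{(n)}(X,*))_\alpha$ and $\pi_p((s_nE)^{(n)}(X,*))_{\alpha,\beta}$ to define transfers and check composition up to quasi-isomorphism, and then use Nisnevich excision (Lemma~\ref{lem:NisExc}) and homotopy invariance to pass from affines to all of $\Sm/k$. You also correctly flag the real difficulty --- promoting a ``presheaf with transfers valued in $D^-(\Ab)$'' to an honest functor $\SmCor(k)^{\op}\to C^-(\Ab)$ --- and observe that this is where the uniformity over tuples of correspondences enters.

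However, your first proposed route for the rigidification does not work as stated. You cannot ``take a cofibrant resolution in the projective model structure on complexes of presheaves with transfers on $\SmCor(k)$,'' because at that point you do not yet have an object of that category to resolve: what you have is a collection of complexes on $\Sm/k$ together with transfer maps that only exist after inverting quasi-isomorphisms, i.e.\ a diagram that is coherent only up to homotopy. A cofibrant replacement presupposes the rigidification already done. Your ``alternative'' route --- indexing over finite tuples of correspondences and passing to jointly refined subcomplexes --- is much closer to the paper's actual mechanism. The paper resolves the coherence problem concretely: it takes a homotopy limit over the $\alpha$-refined complexes (functorial on $\SmCor(k)^{\op}$ up to the homotopy equivalences coming from cofinal changes of index category), then applies a \emph{regularizing homotopy colimit} to make the assignment strictly functorial, and finally takes a fibrant model; the moving lemma guarantees that the result is canonically quasi-isomorphic to $\pi_p((s_nE)^{(n)}(X,*))$ on affines, and Lemma~\ref{lem:NisExc} plus homotopy invariance extend this to all $X$. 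This machinery is the content of \cite[theorem 7.4.1]{LevineML} and \cite[proposition 2.2.3]{KahnLevine}, and it is exactly the piece your sketch leaves implicit. So: right architecture, but the load-bearing rigidification step needs the homotopy-limit/regularizing-colimit construction rather than a model-categorical resolution.
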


\begin{proof} We give a rough sketch of the construction here; for details we refer the reader to  \cite[proposition 2.2.3]{KahnLevine}, which in turn is an elaboration of \cite[theorem 7.4.1]{LevineML}.  The construction of $\hat{\pi}_p((s_nE)^{(n)})^*$ is accomplished by first taking a homotopy limit over the complexes $\pi_p(s_nE)^{(n)}(X,*)_\alpha$. These are then functorial on $\SmCor(k)^\op$, up to homotopy equivalences arising from the replacement of the index category for the homotopy limit with a certain cofinal subcategory. One then forms a regularizing homotopy colimit that is strictly functorial on $\SmCor(k)^\op$, and finally, one replaces this presheaf with a fibrant model. The moving lemma for affine schemes (proposition~\ref{prop:ChowML}) implies that the homotopy limit construction yields for each affine $X\in\Sm/k$ a complex canonically quasi-isomorphic to $\pi_p(s_nE)^{(n)}(X,*)$; this property is inherited by the regularized homotopy colimit. As the complexes $\pi_p(s_nE)^{(n)}(X,*)$ satisfy Nisnevich excision (lemma~\ref{lem:NisExc}) and are homotopy invariant for {\em all X}, this implies that the fibrant model $\hat{\pi}_p((s_nE)^{(n)})^*$ is canonically quasi-isomorphic to  $\pi_p(s_nE)^{(n)}(X,*)$ for all $X\in\Sm/k$. 
\end{proof}

\begin{cor} \label{cor:CorSuppExt}  Suppose $\Char k=0$. $\hat{\pi}_p((s_nE)^{(n)})^*$ is a homotopy invariant complex of presheaves with transfer.
\end{cor}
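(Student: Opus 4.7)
The plan is to read off this corollary directly from Theorem~\ref{thm:CorSuppExt}, using the homotopy invariance statement already recorded earlier in the section. The presheaf-with-transfers structure is part of the conclusion of Theorem~\ref{thm:CorSuppExt}, so the only thing left to verify is that the resulting complex on $\SmCor(k)^\op$ is homotopy invariant, i.e.\ that for the projection $p\colon X\times\A^1\to X$ the pullback
\[
p^*\colon \hat{\pi}_p((s_nE)^{(n)})^*(X)\to \hat{\pi}_p((s_nE)^{(n)})^*(X\times\A^1)
\]
is a quasi-isomorphism for every $X\in\Sm/k$.

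First I would invoke the natural isomorphism
\[
Q\circ\pi_p((s_nE)^{(n)}(-,*))\;\cong\;Q\circ\hat{\pi}_p((s_nE)^{(n)})^*\circ\iota
\]
provided by Theorem~\ref{thm:CorSuppExt}. This identifies, in $D^-(\Ab)$ and functorially in smooth morphisms, the sections of $\hat{\pi}_p((s_nE)^{(n)})^*$ with the simpler complexes $\pi_p((s_nE)^{(n)}(X,*))$. Since the $\A^1$-projection is smooth, the question of homotopy invariance for $\hat{\pi}_p((s_nE)^{(n)})^*$ is equivalent to homotopy invariance for the latter, unrefined complexes of abelian groups.

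Next I would recall the observation made in the paragraph preceding Lemma~\ref{lem:NisExc}: the simplicial spectrum $X\mapsto (s_nE)^{(n)}(X,*)$ inherits homotopy invariance from the quasi-fibrant spectrum $s_nE$ via \cite[theorem 3.3.5]{LevineML}. Applying $\pi_p$ term-by-term preserves weak equivalences of spectra, hence yields quasi-isomorphisms of complexes of abelian groups; so each $\pi_p((s_nE)^{(n)}(-,*))$ is homotopy invariant on $\widetilde{\Sm}/k$. Combining this with the previous identification gives homotopy invariance of $\hat{\pi}_p((s_nE)^{(n)})^*$ in $D^-(\Ab)$.

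There is essentially no obstacle: the genuinely hard step (constructing the extension of $\pi_p((s_nE)^{(n)}(-,*))$ to a transfer-structured complex on $\SmCor(k)^\op$ by way of the refined support conditions, moving lemma, and regularization procedure) is already complete in Theorem~\ref{thm:CorSuppExt}. The only subtle point is that the identification between $\pi_p((s_nE)^{(n)}(-,*))$ and $\hat{\pi}_p((s_nE)^{(n)})^*\circ\iota$ is only given on $\widetilde{\Sm}/k$, but this is harmless because $\A^1$-projection lies in $\widetilde{\Sm}/k$, which is all one needs to test homotopy invariance.
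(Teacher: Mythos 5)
Your proposal is correct and follows essentially the same route as the paper: invoke Theorem~\ref{thm:CorSuppExt} to identify $\hat{\pi}_p((s_nE)^{(n)})^*$ with $\pi_p((s_nE)^{(n)}(-,*))$ in $D^-(\Ab)$ on $\widetilde{\Sm}/k$, then transport the already-recorded homotopy invariance of the latter. One small caveat: your phrase ``applying $\pi_p$ term-by-term preserves weak equivalences'' is not quite the right justification, since homotopy invariance of the simplicial spectrum $(s_nE)^{(n)}(X,*)$ is a statement about total spectra, not a termwise equivalence; but this is harmless because the homotopy invariance of the chain complexes $\pi_p((s_nE)^{(n)}(-,*))$ is taken directly from \cite[theorem 3.3.5]{LevineML}, exactly as the paper does in the paragraph preceding Lemma~\ref{lem:NisExc}.
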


\begin{proof}  By theorem~\ref{thm:CorSuppExt}, we have the  isomorphism in $D^-(\Ab)$
\[
\hat{\pi}_p((s_nE)^{(n)})^*\cong  \pi_p((s_nE)^{(n)}(-,*)).
\]
for all $X\in \Sm/k$. As  the presheaf $\pi_p((s_nE)^{(n)}(-,*))$ is homotopy invariant, so is  $\hat{\pi}_p((s_nE)^{(n)})^*$.
\end{proof}

\begin{proof}[proof of theorem~\ref{IntroThm:Tower}] As in the proof of theorem~\ref{thm:CorSuppExt}, the method of \cite[theorem 7.4.1]{LevineML}, shows that the tower  \eqref{eqn:SimplPostnikov} extends to a tower 
\begin{equation}\label{eqn:FunctPostnikov} 
\ldots\to\rho_{\ge p+1}s_nE\to \rho_{\ge p}s_nE\to \ldots\to s_nE
\end{equation}
in $\SH_{S^1}(k)$ with value \eqref{eqn:SimplPostnikov} at $X\in \Sm/k$, and with the cofiber of $\rho_{\ge p+1}s_nE\to \rho_{\ge p}s_nE$ naturally isomorphic to $\EM^\eff(\hat{\pi}_p((s_nE)^{(n)})^*)$. By corollary~\ref{cor:CorSuppExt},   the pre\-sheaves $\hat{\pi}_p((s_nE)^{(n)})^*$ define objects in $\DM^\eff_-(k)$. Thus,  we have shown that the layers in the tower \eqref{eqn:FunctPostnikov}  have a ``motivic" structure, proving theorem~\ref{IntroThm:Tower}.
\end{proof}

\section{The Friedlander-Suslin tower} As the reader has surely noticed, the lack of functoriality for the simplicial spectra $E^{(n)}(X,-)$ creates annoying technical problems when we wish to extend the construction of the homotopy coniveau tower to a tower in $\SH_{S^1}(k)$. In their work on the spectral sequence from motivic cohomology to $K$-theory, Friedlander and Suslin \cite{FriedSus} have constructed a completely functorial version of the homotopy coniveau tower, using ``quasi-finite supports".  Unfortunately, the comparison between the Fried\-lander-Suslin version and $E^{(n)}(X,-)$ is proven in \cite{FriedSus} only for $K$-theory and motivic cohomology. In this last section, we recall the Fried\-lander-Suslin construction and form the conjecture that the  Friedlander-Suslin tower is naturally isomorphic to the homotopy coniveau tower. 

Let $\sQ_X^{(n)}(m)$ be the set of closed subsets $W$ of $\A^n\times X\times\Delta^m$ such that, for each irreducible component $W'$ of $W$, the projection  $W'\to X\times\Delta^m$ is quasi-finite. For $E\in\Spt_{S^1}(k)$, we let 
\[
E^{(n)}_{FS}(X,m):=\colim_{W\in \sQ^{(n)}_X(m)}E^{(W)}(\A^n\times X\times\Delta^m)
\]
As the condition defining $\sQ_X^{(n)}(m)$ are preserved under maps 
\[
\id_{\A^n}\times f\times g:\A^n\times X'\times\Delta^{m'}\to \A^n\times X\times\Delta^m, 
\]
where $f:X'\to X$ is an arbitrary map in $\Sm/k$, and $g:\Delta^{m'}\to \Delta^m$ is a structure map in $\Delta^*$, the spectra $E^{(n)}_{FS}(X,m)$ define a simplicial spectrum $E^{(n)}_{FS}(X,-)$ and these simplicial spectra, for $X\in \Sm/k$, extend to a presheaf of simplicial spectra on $\Sm/k$:
\[
E^{(n)}_{FS}(?,-):\Sm/k^\op\to \Delta^\op\Spt.
\]
Similarly, if we take the linear embedding $i_n:\A^n\to \A^{n+1}=\A^n\times\A^1$, $x\mapsto (x,0)$, the  pull-back by $i_n\times\id$ preserves the support conditions, and thus gives a well-defined map of simplicial spectra
\[
i_n^*:E^{(n+1)}_{FS}(X,-)\to E^{(n)}_{FS}(X,-),
\]
forming the tower of presheaves on $\Sm/k$
\begin{equation}\label{eqn:FSTower}
\ldots\to E^{(n+1)}_{FS}(?,-)\to E^{(n)}_{FS}(?,-)\to\ldots
\end{equation}

We may compare $E^{(n)}_{FS}(X,-)$ and $E^{(n)}(X,-)$ using the method of \cite{FriedSus} as follows: The simplicial spectra $E^{(n)}(X,-)$ are functorial for flat maps in $\Sm/k$, in the evident manner. They satisfy homotopy invariance, in that the pull-back map
\[
p^*:E^{(n)}(X,-)\to E^{(n)}(\A^1\times X,-)
\]
induces a weak equivalence on the total spectra. We have the evident inclusion of simplicial sets
\[
\sFS_X^{(n)}(-)\hookrightarrow \sS^{(n)}_{\A^n\times X}(-)
\]
inducing the map
\[
\phi_{X,n}:E^{(n)}_{FS}(X,-)\to E^{(n)}(\A^n\times X,-) 
\]

Together with the weak equivalence  $p^*:|E^{(n)}(X,-)| \to |E^{(n)}(\A^n\times X,-)|$, the maps $\phi_{X,n}$ induce a map of towers of total spectra in $\SH$

\begin{equation}\label{eqn:FSComp}
\phi_{X,*}: |E^{(*)}_{FS}(X,-)|\to |E^{(*)}(X,-)|.
\end{equation}

\begin{conj} \label{conj:WH1} For each $X\in\Sm/k$ and each quasi-fibrant $E\in \Spt_{S^1}(k)$, the map \eqref{eqn:FSComp} induces an isomorphism in $\SH$ of the towers of total spectra.
\end{conj}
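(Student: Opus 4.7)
The plan is to verify the conjecture level by level. At level $n$ one has the zigzag
\[
|E^{(n)}_{FS}(X,-)| \xrightarrow{\phi_{X,n}} |E^{(n)}(\A^n\times X,-)| \xleftarrow{p^*} |E^{(n)}(X,-)|,
\]
whose right-hand arrow is a weak equivalence by the homotopy invariance of the homotopy coniveau construction established in \cite{LevineHC}. Thus the entire task reduces to showing that $\phi_{X,n}$ is a weak equivalence in $\SH$, compatibly with the transition maps $i_n^*$ in both towers. The compatibility is essentially automatic once the level-wise statement is known, since both sides are defined by pullback along the same linear embedding $i_n\times\id$ and both support conditions are preserved by this pullback.

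The heart of the matter is a cofinality-moving statement: every support $W\in \sS^{(n)}_{\A^n\times X}(m)$ should be ``equivalent,'' in the appropriate simplicial-colimit sense, to one lying in $\sFS^{(n)}_X(m)$. For this I would use a generic-position argument. Since $W$ has codimension $\ge n$ in $\A^n\times X\times\Delta^m$, a generic element $g\in \GL_n(k)$ (or, more flexibly, of the semigroup of polynomial endomorphisms of $\A^n$) acts on the first factor so that every top-dimensional component of $g\cdot W$ becomes quasi-finite over $X\times\Delta^m$, while the proper-face-intersection conditions continue to hold. Interpolating between $g$ and $\id_{\A^n}$ through the evident $\A^{n^2}$-family of linear maps yields an $\A^1$-homotopy between the pullback $E^{(W)}\to E^{(g\cdot W)}$ and the identity, from which cofinality of $\sFS^{(n)}_X(-)$ in $\sS^{(n)}_{\A^n\times X}(-)$ up to $\A^1$-homotopy follows, once combined with the iterated moving lemma of \cite[Thm.~2.6.2]{LevineML} to absorb the lower-dimensional components of $W$. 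A standard cofinality-of-colimits argument then upgrades this to the desired weak equivalence of total spectra.

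The main obstacle is coherence over the cosimplicial index. A single $g\in\GL_n$ that moves a given $W$ into quasi-finite position need not do so for its restrictions to faces of $\Delta^m$, and one must therefore carry out the moving argument functorially in $m$, in the spirit of the semi-local Hensel constructions of \cite{FriedSus} and the techniques developed in \cite{LevineML}. A secondary issue is that \cite{FriedSus} carries out the FS-versus-coniveau comparison only for $K$-theory and motivic cohomology, where either the presence of transfers or additional multiplicative structure simplifies the moving step; adapting the proof to an arbitrary quasi-fibrant $E\in \Spt_{S^1}(k)$ requires checking that the $\A^1$-homotopies implementing the moves act on the supported spectra $E^{(W)}$ uniformly and naturally in $E$. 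This last point, though expected, is the key technical input needed to convert the conjecture into a theorem.
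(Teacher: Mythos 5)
This is stated in the paper as an open \emph{conjecture} (Conjecture~\ref{conj:WH1}); the paper offers no proof, and explicitly remarks in the preceding text that the Friedlander--Suslin comparison is established in \cite{FriedSus} only for $K$-theory and motivic cohomology. There is therefore no proof in the paper to compare against, and your write-up should be read---as you yourself signal at the end---as a proof plan rather than a proof.

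The reduction you make, to showing that $\phi_{X,n}$ is a weak equivalence after absorbing $p^*$ by homotopy invariance, is the correct first step. But the central move you propose does not work as stated. Consider a support of the form $W=\A^n\times W'$ with $W'\in\sS^{(n)}_X(m)$ of pure codimension $n$; then $W$ lies in $\sS^{(n)}_{\A^n\times X}(m)$ and is invariant under every $g\in\GL_n$ acting on the $\A^n$ factor (and is not moved to quasi-finite position by any polynomial endomorphism of $\A^n$ either), yet $W$ is nowhere quasi-finite over $X\times\Delta^m$. So no level-wise generic-position argument of the kind you describe can push an arbitrary coniveau support into quasi-finite position, and there is no cofinality of index posets to invoke, even up to level-wise $\A^1$-homotopy. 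Any successful argument has to operate at the level of the \emph{simplicial} spectra, building chain-level homotopies parametrized over $\Delta^*$ in the style of \cite{LevineML}, rather than by moving a single $W$ in a single simplicial degree; this is not a cosmetic adjustment but the reason the machinery of \cite{LevineML} is elaborate.

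Your final two caveats (coherence over the cosimplicial index, and the lack of transfers for a general quasi-fibrant $E$) correctly locate the other places the remaining work lies, and you are right that these, together with the issue above, are precisely why the statement is a conjecture rather than a theorem. The proposal is honest about its gaps, but it does not close them.
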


Combined with the weak equivalence given by homotopy invariance and the results of \cite{LevineHC}, this would give us an isomorphism in $\SH_{S^1}(k)$:
\[
f_nE\cong |E^{(n)}_{FS}(?,-)|
\]

\end{document}